\theoremstyle{plain}
\theoremstyle{definition}
\newtheorem{lemma}{Lemma}[section]
\newtheorem{theorem}{Theorem}[section]
\newtheorem{corollary}{Corollary}[section]
\newtheorem{remark}{Remark}[section]
\numberwithin{equation}{section}
\newcommand{\bs}{\boldsymbol}
\newcommand{\mb}{\mathbb}
\newcommand{\mc}{\mathcal}
\newcommand{\ms}{\mathscr}
\def \pa{\partial}
\begin{document}
\title[]{A Birkhoff Normal Form Theorem for Partial Differential Equations on torus}

\let\thefootnote\relax\footnotetext{Supported by NNSFC (Grant Nos. 11822108, 11971299, 12090010, 12090013)}

\author{Jianjun Liu \quad Duohui Xiang}

\address[Jianjun Liu] {School of Mathematics\\ Sichuan University\\ Chengdu 610065, China}
\email{jianjun.liu@scu.edu.cn}

\address[Duohui Xiang] {School of Mathematics\\ Sichuan University\\ Chengdu 610065, China}
\email{duohui.xiang@outlook.com}

\thanks{}

\begin{abstract}
We prove an abstract Birkhoff normal form theorem for Hamiltonian partial differential equations on torus. The normal form is complete up to arbitrary finite order. The proof is based on a valid non-resonant condition and a suitable norm of Hamiltonian function. Then as two examples, we apply this theorem to nonlinear wave equation in one dimension and nonlinear Schr\"{o}dinger equation in high dimension. Consequently, the polynomially long time stability is proved in Sobolev spaces $H^s$ with the index $s$ being much smaller than before. Further, by taking the iterative steps depending on the size of initial datum, we prove sub-exponentially long time stability for these two equations.
\\\textbf{Keywords:} Birkhoff normal form, long time stability, nonlinear wave equations, nonlinear Schr\"{o}dinger equations
\end{abstract}

\maketitle
\tableofcontents

%=============================================================================
\section{Introduction}

In the present paper, we consider long time behavior of small amplitude solutions of Hamiltonian partial differential equations on torus. As an example, let us begin with the nonlinear wave equation
\begin{equation}
\label{form11}
u_{tt}-u_{xx}+mu+f(x,u)=0,\quad x\in\mb{T}:=\mb{R}/2\pi\mb{Z},
\end{equation}
where $m>0$ and the function $f$ is analytic of order at least two with respect to $u$ at the origin, i.e. $f(x,0)=\pa_{u}f(x,u)|_{u=0}=0$.
Denote $\Lambda:=(-\partial_{xx}+m)^{1/2}$ and let $z=\frac{1}{\sqrt{2}}(\Lambda^{\frac{1}{2}}u+{\rm i}\Lambda^{-\frac{1}{2}}u_t)$. Then using the Fourier expansion
$z(t,x)=\sum_{a\in\mb{Z}}z_{a}(t)e^{{\rm i}ax},$
the equation \eqref{form11} is written as
\begin{equation}
\dot{z}_{a}=-{\rm i}\frac{\pa H}{\pa \bar{z}_{a}}
\end{equation}
with the Hamiltonian function
\begin{equation}
\label{form7-30-1}
H=H_0+P=\sum_{a\in\mb{Z}}\omega_{a}|z_{a}|^2+P(z,\bar{z}),
\end{equation}
where $\omega_a=\sqrt{a^{2}+m}$ and $P$ has a zero of order at least three at the origin.

For Hamiltonian of the form \eqref{form7-30-1}, Birkhoff normal form is a fundamental tool to investigate long time dynamics around the origin.
In analogy to classical Birkhoff normal form theorem of finite dimensions, one hopes to find an analytic canonical transformation $\Phi_r$ for $r\geq1$ such that
\begin{equation}
\label{form7-31-2}
H\circ\Phi_r=H_0+\ms{Z}+\ms{R},
\end{equation}
where $\ms{Z}$ is a resonant term of order at most $r+2$, and
the remainder term $\ms{R}$ has a zero of order $r+3$.
%
%For this purpose, in analogy to the non-resonant condition of finite dimensions, one needs that there exists $C>0$ such that the non-trivial combinations of frequencies $\{\omega_a\}_{a\in\mb{Z}}$ satisfy
%\begin{equation}
%|\omega_{a_1}\pm\omega_{a_2}\pm\cdots\pm\omega_{a_{l+2}}|\geq  C
%\end{equation}
%for any $1\leq l\leq r$.
%
For infinite dimensional systems, usually there is no uniformly positive lower bound to ensure non-resonant conditions.
In \cite{BG06}, only these terms with at most two big Fourier modes were eliminated.
As a result, instead of \eqref{form7-31-2}, 
\begin{equation}
H\circ\Phi_r=H_0+\ms{Z}+\ms{R}_{\mathrm{N}}+\ms{R}
\end{equation}
with $\ms{R}_{\mathrm{N}}$ being a truncated remainder in which all terms have at least three big Fourier modes bigger than the truncation parameter $\mathrm{N}$.
Correspondingly, the non-resonant conditions are
\begin{equation}
\label{form7-28-4}
|\omega_{a_1}\pm\omega_{a_2}\pm\cdots\pm\omega_{a_{l+2}}|\geq \frac{\gamma}{\mathrm{N}^{\alpha}},
\end{equation}
where $\gamma>0,\alpha\geq0$, $1\leq l\leq r$, and $|a_1|\leq|a_{2}|\leq\cdots\leq|a_{l+2}|$ with $|a_{l}|\leq \mathrm{N}$.
By tame property, the truncated remainder $\ms{R}_\mathrm{N}$ was well estimated in Sobolev space $H^s$ with the index $s$ sufficiently large. Then an abstract Birkhoff normal form theorem was built to prove long time stability for a wide class of Hamiltonian partial differential equations on torus, such as nonlinear wave equations and nonlinear Schro\"dinger equations.
%proved that these terms have no influence on the stability results.
%
More precisely, if the initial data satisfy $\|u(0)\|_{H^{s}}\leq \varepsilon$, then the solutions satisfy $\|u(t)\|_{H^{s}}\leq2\varepsilon$ for any $|t|\leq \varepsilon^{-r}$, where $s\geq s_{*}$ with
\begin{equation}
s_{*}=O(r^{2}\alpha).
\end{equation}
Specially for the nonlinear wave equation \eqref{form11}, the lower bound $s_{*}=O(r^{8})$.
There are many other papers in which Birkhoff normal form is used to investigate partial differential equations on torus. For nonlinear wave and Schr\"{o}dinger equations, also see \cite{B96,B03,CLY16,BFG20} for example; for equations with unbounded nonlinear vector field, see \cite{D12,YZ14,BD18,Z20} for example; for equations without external parameters, see \cite{B00,BFG20b,BG21} for example.

In the present paper, we eliminate all non-resonant terms of order $r+2$. Thus the Hamiltonian \eqref{form7-30-1} is transformed into \eqref{form7-31-2}, which is a complete Birkhoff normal form up to order $r+2$. 
The non-resonant conditions are 
\begin{equation}
\label{form7-28-3}
|\omega_{a_1}\pm\omega_{a_2}\pm\cdots\pm\omega_{a_{l+2}}|\geq \gamma\Big(\frac{a^*_{1}}{a_2^*\cdots a_{l+2}^*}\Big)^{\tau},
\end{equation}
where $\gamma>0$, $\tau\geq0$, and $a_{1}^{*}\geq a_2^*\geq\dots\geq a_{l+2}^{*}$ denotes the decreasing rearrangement of $\{1+|a_{1}|,1+|a_{2}|,\dots,1+|a_{l+2}|\}$. 
Notice that, without truncation, there is no uniformly positive lower bound for the right hand side of \eqref{form7-28-3}.
Then we build an abstract Birkhoff normal form theorem to prove long time stability for  Hamiltonian partial differential equations on torus, seeing \Cref{th21} and \Cref{co21}. More precisely, the phase space is $H^s$, $s\geq s_{*}$ with 
\begin{equation}
s_{*}=O(r\tau),
\end{equation}
which is usually much smaller than it in \cite{BG06}.
Specially for the nonlinear wave equation \eqref{form11},  we prove $s_{*}=O(r^{4})$, seeing \Cref{th41}.
In addition, for all constants in the Birkhoff normal form process, we concretely calculate the dependence on the iterative steps $r$. Therefore, similarly with \cite{BMP20}, we get longer time of stability by taking big enough $r$ depending on $\varepsilon$, namely
\begin{equation}
T=e^{\frac{1}{2}(\ln\frac{1}{\varepsilon})^{1+\lambda}}\quad\text{for any}\quad 0<\lambda<\frac{1}{4},
\end{equation}
seeing \Cref{co41}.
%%
%Compared with, the difference is that

Actually, \Cref{th21} and \Cref{co21} are valid for partial differential equations on $\mb{T}^{d}$ with $d\geq1$. As an application in higher dimensions, for example, we consider the nonlinear Schr\"{o}dinger equation
\begin{equation}
\label{form12}
{\rm i}\pa_{t}u=-\Delta u+V*u+g(x,u,\bar{u}),\quad x\in\mb{T}^{d}
\end{equation}
with the dimension $d\geq2$, the potential $V$, and the nonlinearity $g$ being smooth with respect to $x$ and analytic of order at least three with respect to $u,\bar{u}$.
In \Cref{th42}, we prove the long time stability in $H^{s}$, where the index $s$ satisfies $s\geq s_{*}=O(r)$ instead of $s_{*}=O(r^{3})$ in \cite{BG06}.
%
%%============
\iffalse
Besides, they showed
$$\sup_{|a|>N^{\frac{\alpha}{\nu}}}|a|^{2s}|\sum_{|b|=|a|}u_b(t)\bar{u}_b(t)-\sum_{|b|=|a|}u_b(0)\bar{u}_b(0)|\leq C\varepsilon^{3},$$
where $\nu>d/2$ is a given number. Roughly speaking, it implies that the energy transfers are allowed only among modes having indexes with equal and large modulus. Instead, we roughly specify $\alpha>\nu$ and thus
$$|a|^{2s}|u_a(t)\bar{u}_a(t)-u_a(0)\bar{u}_a(0)|\leq C\varepsilon^{3},\quad \text{for any}\; a\in\mb{Z}^{d}.$$
%
%The difference is that the  but we ...
\fi
%==================
%
Further, we consider longer time of stability by taking big enough $r$ depending on $\varepsilon$. Previously in \cite{BMP20}, this type of long time stability has been investigated for the equation \eqref{form12} with $d=1$ and an analytic nonlinearity $g(x,|u|^{2})u$. Precisely, the non-resonant conditions are of the form
\begin{equation}
\label{form8-5-1}
|k\cdot\omega|>\gamma\prod_{a\in\mb{Z}}\frac{1}{1+|k_a|^{2}\max\{1,|a|^{\tau}\}}
\end{equation}
with $\gamma>0$, $\tau\geq2$,
%$\omega=\{\omega_a\}_{a\in\mb{Z}}$, $k\in\mb{Z}^{\mb{Z}}$ with $|k|<\infty$,
%
and the stability time is
\begin{equation}
T=e^{\frac{(\ln\frac{1}{\varepsilon})^{2}}{C\ln\ln\frac{1}{\varepsilon}}}
\end{equation}
with $C>0$.
Notice that the non-resonant conditions \eqref{form8-5-1} are invalid for the equation \eqref{form12} with $d\geq2$. 
In the present paper,  for the equation \eqref{form12} with $d\geq2$, we get the stability time
\begin{equation}
\label{form13}
T=\rho^{\frac{(\log_{\rho}\frac{1}{\varepsilon})^{2}}{24\tau\log_{\rho}\log_{\rho}\frac{1}{\varepsilon}}}
\end{equation}
with $\rho=2^{d+4}3^{d}$, seeing \Cref{co42}.
Especially for the equation \eqref{form12} with the nonlinearity $g(u,\bar{u})$,  i.e., not containing the spatial variable $x$ explicitly, we get 
\begin{equation}
T=\rho^{\frac{1}{46}(\log_{\rho}\frac{1}{\varepsilon})^{2}},
\end{equation}
seeing \Cref{re41}.
There are some other papers to investigate longer time of stability, also seeing \cite{B99b,FG13,CLSY18,CLW20,CMW20,CMWZ22} for example.

In order to obtain more concrete estimates in Birkhoff normal form process, we introduce the $s,N$-norm \eqref{form27}. This norm is adapted to partial differential equations with their nonlinearity being smooth with respective to the spatial variable $x$ on tours. 
We emphasize that this norm is well kept under Poisson bracket, seeing the inequailty \eqref{form38} in which the index $s,N$ are miantained.
As a result, we could concretely estimate the remainder term $\ms{R}$ by its Taylor series, seeing \eqref{form217} in \Cref{th21}.
%
%For convenience, consider the norm with the zero momentum condition, i.e. 
When the nonlinearity does not contain the spatial variable $x$ explicitly, the norm \eqref{form27} means that the coefficient of every monomial is  bounded by
\begin{equation}
\label{form8-6-1}
\Big(\frac{a_2^*\cdots a_{l+2}^*}{a_{1}^*}\Big)^{s}.
\end{equation}
%%==========
\iffalse
Preciously, for the homogenous polynomial of order $p+q$ with the form
\begin{equation}
P(z,\bar{z})=\sum_{\substack{\bs{a}\in(\mb{Z}^d)^{p+q}\\a_1+\cdots+a_p-a_{p+1}-\cdots-a_{p+q}=0}}\bs{c}_{\bs{a}}z_{a_1}\cdots z_{a_p}\bar{z}_{a_{p+1}}\cdots\bar{z}_{a_{p+q}},
\end{equation}
%
and then its $s$-norm is 
\begin{equation}
\sup_{\bs{a}}\big|(\frac{a_{1}^*}{a_2^*\cdots a_{p+q}^*})^{s}c_{\bs{a}}\big|
\end{equation}
with $s\geq0$. 
%and $a_{1}^{*}\geq a_2^*\geq\dots\geq a_{p+q}^{*}$ denotes the decreasing rearrangement of $\{1+|k_{1}|,\dots,1+|k_{p}|,1+|h_{1}|,\cdots,1+|h_q|\}$.
%
In nature, the $s$-norm for $s>0$ could be used to estimate the coefficients of monomials in \eqref{form7-28-1} by their Fourier modes. 
\fi
%%==========
%Although
%\begin{equation}
%\frac{a_2^*\cdots a_{l+2}^*}{a_{1}^*}\sim a_3^*\cdots a_{l+2}^*,
%\end{equation}
%
Compared with $a_3^*\cdots a_{l+2}^*$, the structure of $\frac{a_2^*\cdots a_{l+2}^*}{a_{1}^*}$ is better kept under Poisson bracket.
%a multiple $c$ depending on the iterative steps $r$ is required to ensure the iterative process.   
%
This structure has been used for the derivative nonlinear Schr\"odinger equation in \cite{L22}.
In addition, we mention \cite{BDGS07} for long time stability of nonlinear wave equations on Zoll manifold, in which the localized coefficients are of the form
\begin{equation}
\frac{(a_3^*)^{\nu+N}}{(|a_1^*-a_2^*|+a_3^*)^{N}}
\end{equation}
with $\nu\geq0$, $N\in\mb{N}$.
%we also get the estimate of the vector field, seeing \Cref{le32}. 
%
%In the normal form theorem, all the constants in the estimates are concrete, such as the stability radius $R_{*}$ in \eqref{form213} and the coefficient estimates of $\ms{Z}$ and $\ms{R}$. Compared with \cite{BG06}, the \emph{tame modulus} can not be used to estimate its coefficients, and thus they only estimated the Hamiltonian vector field of $\ms{R}$.
%%
%\Cref{th21} and \Cref{co21} are suitable for the bigger Sobolev space $H^{s}$ than it in \cite{BG06}.
%
These are some others papers with coefficient estimates. For nonlinear wave equations on sphere, also see \cite{B08,D15} for example; for nonlinear Schr\"odinger and wave equations with quadratic potential, see \cite{GIP09,Z10} for example.
%other long time stability results  or , see . 
%%

 Now, we lay out an outline of the present paper.
\\\indent In section \ref{sec2}, we firstly introduce some notations and the norm
%of polynomial by using the polynomial weight of the momentum instead of a exponential weight of the momentum in \cite{BMP20}, 
\eqref{form27}. Then we give an abstract Birkhoff normal form theorem with concrete estimates of all constants in the transformation $\phi$, the normal form $\ms{Z}$ and the remainder term $\ms{R}$, seeing Theorem \ref{th21}. Finally, under the non-resonant condition \eqref{form218}, we deduce the long time stability results, seeing Corollary \ref{co21}.
%We show that if the nonlinearity satisfies certain conditions, then there exists a canonical transformation that put the Hamiltonian function $H$ into the form \eqref{form214}. Besides, we give the detailed coefficient estimate by computing explicitly all the constants.
%%
\\\indent In section \ref{sec3}, we prove Theorem \ref{th21} and Corollary \ref{co21}. Firstly, we estimate the vector field, the Poisson bracket, and the solution of homological equation, seeing \Cref{le31}--\Cref{le36}.
%Notice that for monomial $u_{a_1}u_{a_2}u_{a_3}\cdots u_{a_l}$, the coefficient $\frac{|a_1|}{|a_2|\dots|a_{l}|}$ with $|a_1|=\max\{|a_1|,\cdots,|a_l|\}$ has no effect on these estimate.
Then we prove an iterative lemma, seeing Lemma \ref{le37}. In these lemmas, all constants are concretely calculated.
%In the proof of the iterative Lemma, it is a little tedious to adjust the constants in the estimates and the radius of convergence.
Finally, \Cref{th21} and \Cref{co21} are proved.
\\\indent In section \ref{sec4}, we present two applications, i.e., nonlinear wave equation in one dimension and nonlinear Schr\"{o}dinger equation in high dimension. Firstly, for these two equations with nonlinearity being smooth with respect to $x$ and analytic with respect to $u$, we prove polynomially long time stability in $H^s$, seeing \Cref{th41} and \Cref{th42}. Compared with \cite{BG06}, the index $s$ in these two theorems could be smaller.
%
%
%In subsection \ref{sec41}, we consider the nonlinear wave equation \eqref{form11}.
%For convenience, take $m\in[1,2]$ and then we show that for almost all $m$, the solution is sufficiently close to the small initial data for a polynomially long time, seeing Theorem \ref{th41}. We also get a longer time of stability, seeing Corollary \ref{co41}.
%
%In subsection \ref{sec42}, we prove the long time stability of the solutions of the nonlinear Schr\"{o}dinger equation \eqref{form12}, seeing Theorem \ref{th42}. In this case, we can transform the Hamiltonian into a complete Birkhoff normal form up to any finite order, namely the normal form $\ms{Z}$ only depends on the action $I$. Similarly, we have the sub-exponentially long time of stability, seeing \Cref{co42} and \Cref{re41}. 
%
Moreover, the same as \cite{BG06}, we could also assume the nonlinearity being smooth with respective to $u$.
In fact, in \cite{BG06}
%, whether the nonlinearity are smooth or analytic, there is no difference in nature. More precisely, 
the nonlinearity $P$ is written as $P=P'+\ms{R}_{*}$, where the polynomial $P'$ is dealt in Birkhoff normal form and the Taylor remainder $\ms{R}_{*}$ has no influence on the stability.
Further, for these two equations with nonlinearity being analytic with respect to both $x$ and $u$, we prove sub-exponentially long time stability, seeing \Cref{co41} and \Cref{co42}. Furthermore, for nonlinear  Schr\"{o}dinger equation with nonlinearity not containing the spatial variable $x$ explicitly, the stability time in \Cref{co42} could be improved, seeing \Cref{re41}.
%
%We estimate the measure of the external parameters, such as the mass $m$ in the wave equation \eqref{form11} and the potential $V$ in the Schr\"{o}dinger equation \eqref{form12}, seeing Lemma \ref{le42} and Lemma \ref{le44}.
%%

 Appendix contains several lemmas. Lemma \ref{le1} provides one elementary inequality being frequently used in this paper. Lemma \ref{le2} is used to estimate the vector field of homogenous polynomials. Lemma \ref{le3} shows a relationship between Fourier modes.
 %which is used to transform the non-resonant set. 
Lemma \ref{le4}--Lemma \ref{le8} are used to estimate the measure for nonlinear wave equations.
%the measure of a single resonant set $\ms{P}_{\bs{j}}(\gamma,\tau)$ in \eqref{form414} and $\ms{P}_{n,\bs{j}'}(X)$ in \eqref{form026}, respectively, , prove Lemma \ref{le42}
%=============================================================================
\section{Statement of abstract results}
\label{sec2}
Define the Hilbert space $\ell_{s}^{2}(\mb{Z}^{d},\mb{C})$ of the complex sequences $\xi=\{\xi_{a}\}_{a\in\mb{Z}^{d}}$ such that
\begin{equation}
\label{form21}
\|\xi\|^{2}_{s}:=
\sum_{a\in\mb{Z}^{d}}\langle a\rangle^{2s}|\xi_{a}|^{2}<\infty
\end{equation}
with $\langle a\rangle=1+|a|=1+\sqrt{a_{1}^{2}+\cdots+a_{d}^{2}}$.
Notice that for complex function $u(x)=\sum_{a\in\mb{Z}^{d}}\xi_{a}e^{{\rm i}a\cdot x}$ on $\mb{T}^{d}$ with $a\cdot x=a_{1}x_{1}+\cdots+a_{d}x_{d}$, the Sobolev norm $\|u\|_{H^{s}}$ is equivalent to the norm $\|\xi\|_{s}$. The scale of phase spaces
$$\ms{P}_{s}:=\ell_{s}^{2}\oplus\ell_{s}^{2}\ni (\xi,\bar{\xi})= (\{\xi_{a}\}_{a\in\mb{Z}^{d}}, \{\bar{\xi}_{a}\}_{a\in\mb{Z}^{d}})$$ is endowed by the standard symplectic structure $-{\rm i}\sum_{a\in\mb{Z}^{d}}d\xi_{a}\wedge d\bar{\xi}_{a}$.
For a Hamiltonian function $H(\xi,\bar{\xi})$, define its vector field
\begin{equation}
\label{form22}
X_{H}(\xi,\bar{\xi})=-{\rm i}\Big(\frac{\pa H}{\pa\bar{\xi}},
-\frac{\pa H}{\pa \xi}\Big),
\end{equation}
and for two Hamiltonian functions $H(\xi,\bar{\xi})$ and $F(\xi,\bar{\xi})$, define their Poisson bracket
\begin{equation}
\label{form23}
\{H,F\}=-{\rm i}\sum_{a\in\mb{Z}^{d}}\Big(\frac{\pa H}{\pa\xi_a}\frac{\pa F}{\pa\bar{\xi}_a}-\frac{\pa H}{\pa\bar{\xi}_a}\frac{\pa F}{\pa\xi_a}\Big).
\end{equation}
\indent In a brief statement, we identify $\mb{C}^{\mb{Z}^{d}}\times\mb{C}^{\mb{Z}^{d}}\simeq
\mb{C}^{\mb{U}_{2}\times\mb{Z}^{d}}$ with $\mb{U}_{2}=\{\pm1\}$ and use the convenient notation $z=(z_{j})_{j=(\delta,a)\in\mb{U}_{2}\times\mb{Z}^{d}}$, where $$z_{j}=\left\{\begin{aligned}\xi_{a}, &\;\text{when}\;\delta=\;\;\,1,
\\\bar{\xi}_{a}, &\;\text{when}\;\delta=-1.
\end{aligned}\right.$$
Let $\bar{j}=(-\delta,a)$, then $\bar{z}_{j}=z_{\bar{j}}$. Set $\langle j\rangle=\langle a\rangle$ and define
\begin{equation}
\label{form24}
\|z\|_{s}^{2}:=\sum_{j\in\mb{U}_{2}\times\mb{Z}^{d}}\langle j\rangle^{2s}|z_{j}|^{2}=\|\xi\|^{2}_{s}+\|\bar{\xi}\|_{s}^{2}.
\end{equation}
For $\bs{j}=(j_{1},\cdots,j_{l})=(\delta_{k},a_{k})_{k=1}^{l}\in(\mb{U}_{2}\times\mb{Z}^{d})^{l}$, denote the monomial $z_{\bs{j}}=z_{j_{1}}\cdots z_{j_{l}}$ and the momentum
\begin{equation}
\label{form25}
\mc{M}(\bs{j})=\delta_{1}a_{1}+\cdots+\delta_{l}a_{l}.
\end{equation}
%%%
\indent Consider a homogeneous polynomial $f(z)$ of order $l$ with the symmetric form
\begin{equation}
\label{form26}
f(z)=\sum_{\bs{j}\in(\mb{U}_{2}\times\mb{Z}^{d})^{l}}\tilde{f}_{\bs{j}}z_{\bs{j}},
\end{equation}
where for any permutation $\sigma$,  $\tilde{f}_{\sigma(\bs{j})}=\tilde{f}_{\bs{j}}$.
For any $N\geq s\geq0$, define the $s,N$-norm of the homogeneous polynomial
\begin{equation}
\label{form27}
|f|_{s,N}=\sum_{b\in\mb{Z}^{d}}\langle b\rangle^{N-s}\sup_{\mc{M}(\bs{j})=b}\big|(\frac{j_{1}^{*}}{j_{2}^{*}\dots j_{l}^{*}})^{s}\tilde{f}_{\bs{j}}\big|,
\end{equation}
where $j_{1}^{*}\geq\dots\geq j_{l}^{*}$ denotes the decreasing rearrangement of $\{\langle j_{1}\rangle,\dots,\langle j_{l}\rangle\}$. Remark that $j_{1}^{*}\leq\langle\mc{M}({\bs{j}})\rangle j_{2}^{*}\dots j_{l}^{*}$. So $|f|_{s,N}\leq|f|_{s',N}$ for any $s\geq s'$.
Denote $\mc{F}^{s,N}_{l}$ as the class of homogeneous polynomials of order $l$ with the form \eqref{form26} and finite $s,N$-norm.
\\\indent For a function $f(z)=\sum_{i\geq1}f_{i}(z)$ with $f_{i}\in\mc{F}^{s,N}_{i+2}$, define
\begin{equation}
\label{form28}
\langle|f|\rangle_{s,N}^{R}=\sum_{i\geq1}|f_{i}|_{s,N}R^{i}.
\end{equation}
\indent Consider a homogeneous polynomial $\ms{Z}(z)$ of order $l$, which is of the form
$$\ms{Z}(z)=\sum_{\bs{j}\in(\mb{U}_{2}\times\mb{Z}^{d})^{l}}\ms{Z}_{\bs{j}}z_{\bs{j}}.$$
Fix $\tau\geq0$ and $\gamma>0$. The polynomial $\ms{Z}$ is said to be in $(\gamma,\tau)$-normal form with respect to $\omega$ if $\ms{Z}_{\bs{j}}\neq0$ implies
\begin{equation}
\label{form29}
|\delta_{1}\omega_{a_{1}}+\dots+\delta_{l}\omega_{a_{l}}|<\gamma(\frac{j_{1}^{*}}{\langle\mc{M}(\bs{j})\rangle j_{2}^{*}\dots j_{l}^{*}})^{\tau}.
\end{equation}
%%%%%
\begin{theorem}
\label{th21}
Consider the Hamiltonian function
\begin{equation}
\label{form210}
H=H_{0}+P=\sum_{a\in\mb{Z}^{d}}\omega_{a}|\xi_{a}|^{2}+\sum_{i=1}^{+\infty}P_{i},
\end{equation}
where
$P_{i}$ is a homogeneous polynomial of order $i+2$ and there exist $s_{0}\geq0, R'>0$ such that for any $N\geq s_{0}$, there exists $C_{N}>0$ such that for any $R\leq R'$, the higher order perturbation $P$ satisfies
\begin{equation}
\label{form211}
\langle|P|\rangle_{s_{0},N}^{R}\leq C_{N}R.
\end{equation}
Fix $\tau\geq0, \gamma>0$ and a positive integer $r$. Let
\begin{equation}
\label{form212}
\rho=2^{d+4}3^{d}
\end{equation}
and
\begin{equation}
\label{form213}
R_{*}=\min\{\frac{R'}{2},\frac{\gamma}{10\rho r^{4}C_{N}}\}.
\end{equation}
Then for any $s\geq s_{0}+r\tau+\frac{d+1}{2}$ , $N\geq s+\frac{d+1}{2}$ and $R\leq R_{*}$,
there exists a canonical transformation $\phi:B_{s}(\frac{R}{2\rho})\longmapsto B_{s}(\frac{R}{\rho})$ such that
\begin{equation}
\label{form214}
H\circ\phi=(H_{0}+P)\circ\phi=H_{0}+\ms{Z}+\ms{R},
\end{equation}
where
\begin{enumerate}[(i)]
 \item  the transformation satisfies the estimate:
    \begin{equation}
    \label{form215}
    \sup_{\|z\|_{s}\leq R/(2\rho)}\|z-\phi(z)\|_{s}\leq \frac{2C_{N}}{\gamma}R^{2},
    \end{equation}
    and the same estimate is fulfilled by the inverse transformation;
\item $\ms{Z}$ is a polynomial of order at most $r+2$ which is in $(\gamma,\tau)$-normal form with respect to $\omega$ and satisfies the estimate:
\begin{equation}
 \label{form216}
 \langle|\ms{Z}|\rangle_{s_{0}+r\tau,N}^{R}\leq rC_{N}R;
\end{equation}
\item the remainder term $\ms{R}^{(r)}$ is of order at least $r+3$ and satisfies the estimate:
 \begin{equation}
 \label{form217}
  \langle|\ms{R}|\rangle_{s_{0}+r\tau,N}^{R}\leq\frac{C_{N}}{R_{*}^{r}}R^{r+1}.
  \end{equation}
\end{enumerate}
\end{theorem}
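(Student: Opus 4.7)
The plan is to prove \Cref{th21} by the standard Lie-transform Birkhoff procedure carried out $r$ times, with explicit tracking of constants. I set $H^{(0)}=H$ and inductively write $H^{(k-1)} = H_{0} + \ms{Z}^{(k-1)} + Q^{(k-1)}$, where $\ms{Z}^{(k-1)}$ is a polynomial of degree at most $k+1$ already in $(\gamma,\tau)$-normal form and $Q^{(k-1)}$ starts at order $k+2$. At step $k$ I split the degree-$(k+2)$ piece $Q^{(k-1)}_{k+2}$ into its resonant part $Z_{k+2}$, consisting of monomials satisfying the inequality in \eqref{form29}, and the complementary non-resonant part $N_{k+2}$. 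The homological equation $\{H_{0},\chi_{k}\}+N_{k+2}=0$ is solved coefficient-wise by
\begin{equation*}
(\chi_{k})_{\bs{j}} = \frac{(N_{k+2})_{\bs{j}}}{-\mathrm{i}\,(\delta_{1}\omega_{a_{1}}+\cdots+\delta_{k+2}\omega_{a_{k+2}})},
\end{equation*}
which is well-defined precisely where the small divisor exceeds $\gamma\bigl(j_{1}^{*}/(\langle\mc{M}(\bs{j})\rangle j_{2}^{*}\cdots j_{k+2}^{*})\bigr)^{\tau}$. Setting $H^{(k)}:=H^{(k-1)}\circ\Phi_{\chi_{k}}^{1}$ and expanding the Lie series, I get $H^{(k)} = H_{0}+\ms{Z}^{(k-1)}+Z_{k+2}+(\text{order }\ge k+3)$, advancing the normalization by one degree. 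The final canonical map is $\phi := \Phi_{\chi_{1}}^{1}\circ\cdots\circ\Phi_{\chi_{r}}^{1}$, and $\ms{Z}=\ms{Z}^{(r)}=Z_{3}+\cdots+Z_{r+2}$, $\ms{R}=Q^{(r)}$.

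The quantitative part rests on the building-block estimates of \Cref{le31}--\Cref{le36}. Dividing by the small divisor inserts exactly the factor $\bigl(j_{1}^{*}/(\langle\mc{M}\rangle j_{2}^{*}\cdots)\bigr)^{-\tau}$ into each coefficient, so the homological step trades $|N_{k+2}|_{s,N}$ for a bound of the form $|\chi_{k}|_{s+\tau,N}\leq \gamma^{-1}|Q^{(k-1)}_{k+2}|_{s,N}$; crucially the momentum weight $\langle\mc{M}(\bs{j})\rangle^{N-s-\tau}$ in the $(s+\tau,N)$-norm absorbs the extra $\langle\mc{M}\rangle^{\tau}$ produced by the divisor, so no $N$-index is lost. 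The Poisson-bracket inequality mentioned as \eqref{form38} then keeps both indices $s,N$ unchanged under each bracket, so conjugation by $\Phi_{\chi_{k}}^{1}$ produces commutator remainders whose $(s+\tau,N)$-norm gains one factor of $\langle|H^{(k-1)}-H_{0}|\rangle^{R}_{s,N}\lesssim C_{N}R$ per bracket. Summing the cascade and choosing $R\leq R_{*}$ with $R_{*}$ as in \eqref{form213} makes the iteration contractive and ensures the high-order tail is absorbed into $\ms{R}$, with the telltale factor $(R/R_{*})^{r}$ after $r$ steps.

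The main obstacle, and the reason for the particular shape of \eqref{form27}, is to preserve simultaneously the delicate weight $(j_{1}^{*}/(j_{2}^{*}\cdots j_{l}^{*}))^{s}$ and the momentum factor $\langle\mc{M}\rangle^{N-s}$ under both division by the small divisor and Poisson bracket. A bracket of a degree-$p$ polynomial with a degree-$q$ polynomial yields degree $p+q-2$, and a naive estimate would force a loss in both $s$ and $N$; the content of the bracket inequality is that this loss is absent, with only polynomial-in-degree constants. These degree-dependent constants accumulate at most as $r^{4}$ through the $r$ iterations, explaining the shape of \eqref{form213}. A parallel bookkeeping issue is the nested domains: I would shrink the radius by a geometric factor at each step and use \Cref{le31} to bound $\|\Phi_{\chi_{k}}^{1}(z)-z\|_{s}$ by a multiple of $|\chi_{k}|_{s,N}R^{k+1}$, which is summable in $k$ and fits inside $B_{s}(R/\rho)$, yielding \eqref{form215} and the mapping property $\phi : B_{s}(R/(2\rho))\to B_{s}(R/\rho)$.

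Once this iterative lemma (essentially \Cref{le37}) is in place, the conclusions are immediate. The polynomial $\ms{Z}$ is in $(\gamma,\tau)$-normal form by construction, and \eqref{form216} follows by summing the $r$ homogeneous pieces, each controlled by $C_{N}R$ via the homological bound. The remainder $\ms{R}$ collects the Taylor tail of $P$ of order $\ge r+3$ together with all sufficiently high-order iterated commutators produced in the Lie series, and its $(s_{0}+r\tau, N)$-norm satisfies \eqref{form217} with the prefactor $C_{N}/R_{*}^{r}$ emerging precisely from the geometric cascade of $r$ Poisson brackets, each controlled by $R/R_{*}$.
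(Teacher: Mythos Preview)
Your proposal is correct and follows essentially the same approach as the paper: an $r$-step Lie-transform iteration (the paper's \Cref{le37}) built on the homological-equation estimate (\Cref{le36}) and the index-preserving Poisson-bracket bound (\Cref{le33}), with $\phi=\Phi_{\chi_{1}}^{1}\circ\cdots\circ\Phi_{\chi_{r}}^{1}$ and the final estimates obtained by summing the step-wise bounds. The only cosmetic difference is that the paper shrinks the radius arithmetically via $R_{l}=R(2-l/r)$ rather than geometrically, which is what produces the exact constants in \eqref{form213}--\eqref{form217}.
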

%%%%%
Let $\mc{J}_{l}:=\left\{\bs{j}=(\delta_{k},a_{k})_{k=1}^{l}\mid \exists\;\text{permutation}\;\sigma,\;\text{s.t.}\;\forall k,\; \delta_{k}=-\delta_{\sigma(k)},\;|a_{k}|=|a_{\sigma(k)}|\right\}$. We say that the family of frequencies $\{\omega_{a}\}_{a\in\mb{Z}^{d}}$ is non-resonant up to order $r$ if there exist $\tau\geq0,\gamma>0$ such that for any $l\leq r$ and $\bs{j}\in(\mb{U}_{2}\times\mb{Z}^{d})^{l}\backslash\mc{J}_{l}$, one has
\begin{equation}
\label{form218}
|\delta_{1}\omega_{a_{1}}+\dots+\delta_{l}\omega_{a_{l}}|\geq\gamma(\frac{j_{1}^{*}}{\langle\mc{M}(\bs{j})\rangle j_{2}^{*}\dots j_{l}^{*}})^{\tau}.
\end{equation}
%%%%%
For any $a,b\in\mb{Z}^{d}$, denote the super action
$J_{a}:=\sum_{\langle b\rangle=\langle a\rangle}z_{b}\bar{z}_{b}$. If the family of frequencies $\{\omega_{a}\}_{a\in\mb{Z}^{d}}$ is non-resonant, then the normal form $\ms{Z}$ only depends on the super action. Hence, we have the following result of long time stability.
%%%%%
\begin{corollary}
\label{co21}
Consider the Hamiltonian function $H$ in Theorem \ref{th21}. Fix a positive integer $r$ and assume that the family of frequencies $\{\omega_{a}\}_{a\in\mb{Z}^{d}}$ is non-resonant up to order $r+2$ for some $\tau\geq0,\gamma>0$.
For any $s\geq s_{0}+r\tau+\frac{d+1}{2}$,  let $N=s+\frac{d+1}{2}$,
\begin{equation}
\label{form219}
\varepsilon_{*}=\frac{3}{20\rho}R_{*}
\end{equation}
and
\begin{equation}
\label{form220}
c_{1}=\frac{1}{5\rho^{2}C_{N}}(4\varepsilon_{*})^{r}.
\end{equation}
If the initial datum $z(0)\in\ms{P}_{s}$ satisfies
\begin{equation}
\label{form221}
\varepsilon:=\|z(0)\|_{s}\leq\varepsilon_{*},
\end{equation}
then one has
\begin{equation}
\label{form222}
\|z(t)\|_{s}\leq2\varepsilon,\;\text{for}\;|t|\leq c_{1}\varepsilon^{-(r+1)}.
\end{equation}
Moreover, for any $0\leq\nu\leq1$, one has
\begin{equation}
\label{form223}
\sup_{a\in\mb{Z}^{d}}\langle a\rangle^{2s}|J_{a}(t)-J_{a}(0)|\leq c_2\varepsilon^{3-\nu},\;\text{for}\;|t|\leq c_{1}\varepsilon^{-(r+\nu)}
\end{equation}
with $c_2=1+\frac{200C_{N}\rho^{2}}{\gamma}$.
\end{corollary}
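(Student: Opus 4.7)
My plan is to apply \Cref{th21} to bring $H$ into Birkhoff normal form, show that non-resonance forces the normal form $\ms{Z}$ to Poisson-commute with every super action, and then obtain both stability conclusions from a standard bootstrap on the slow drift produced by $\ms{R}$. Concretely, I would choose $R$ proportional to $\varepsilon$, say $R = 20\rho\varepsilon/3$, so that the hypothesis $\varepsilon \leq \varepsilon_*$ in \eqref{form219} forces $R \leq R_*$ and \Cref{th21} applies. Since $\|z(0)\|_s = \varepsilon \leq R/(2\rho)$, the inverse transformation is defined at $z(0)$, and I would analyze the trajectory in the new coordinates $w(t) := \phi^{-1}(z(t))$, where the dynamics are generated by $H_0 + \ms{Z} + \ms{R}$; the estimate \eqref{form215} controls the discrepancy between $z$ and $w$.

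The key algebraic step is the observation that the non-resonance hypothesis \eqref{form218} (for $l \leq r+2$ and $\bs{j} \notin \mc{J}_l$) is exactly the negation of the $(\gamma, \tau)$-normal form condition \eqref{form29}. Hence every surviving monomial in $\ms{Z}$ has $\bs{j} \in \mc{J}_l$, meaning its indices pair up into opposite-sign pairs $(j_k, j_{\sigma(k)})$ with $|a_k| = |a_{\sigma(k)}|$, so that the monomial is a product of factors of the form $\xi_{a}\bar\xi_{b}$ with $|a|=|b|$. A direct Poisson-bracket computation yields $\{\xi_a \bar\xi_b, J_c\} = 0$ and $\{\xi_a \bar\xi_b, \|w\|_s^2\} = 0$ whenever $|a|=|b|$, since both the weight $\langle a\rangle^{2s}$ and the indicator $\langle b\rangle = \langle c\rangle$ are then symmetric under $a\leftrightarrow b$. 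By Leibniz this extends to all of $\ms{Z}$, giving $\{H_0 + \ms{Z}, \|w\|_s^2\} = 0$ and $\{H_0 + \ms{Z}, J_a\} = 0$ for every $a \in \mb{Z}^d$.

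Consequently $\frac{d}{dt}\|w\|_s^2 = \{\|w\|_s^2, \ms{R}\}$ along the flow, which Cauchy--Schwarz bounds by $2\|w\|_s \|X_{\ms{R}}(w)\|_s$. Combining the vector-field lemma \Cref{le2} with the remainder estimate \eqref{form217} yields $\|X_{\ms{R}}(w)\|_s \lesssim C_N R_*^{-r} \|w\|_s^{r+2}$ whenever $\|w\|_s \leq R_*$, so the drift rate is $O(\varepsilon^{r+3})$. An open-closed continuity argument then preserves $\|w(t)\|_s \leq \frac{3}{2}\varepsilon$ for $|t| \leq c_1 \varepsilon^{-(r+1)}$, and pulling back through $\phi$ using \eqref{form215} recovers \eqref{form222}. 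The super-action estimate \eqref{form223} follows the same scheme applied to $\frac{d}{dt} J_a(w) = \{J_a, \ms{R}\}$, with the two-parameter family in $\nu$ arising from the trade-off between drift time and final amplitude, and with the comparison between $J_a(z)$ and $J_a(w)$ via \eqref{form215} contributing the additive $1$ in the formula for $c_2$. The main bookkeeping obstacle is matching the precise constants $c_1$ and $c_2$ in \eqref{form220} and \eqref{form223}: this requires choosing $R$ as a tight multiple of $\varepsilon$ and propagating absolute constants faithfully through both the vector-field estimate and \eqref{form215}.
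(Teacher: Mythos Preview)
Your proposal is correct and follows essentially the same route as the paper: apply \Cref{th21} with $R$ a fixed multiple of $\rho\varepsilon$, pass to the transformed coordinates where non-resonance forces $\{H_0+\ms{Z},\|\cdot\|_s^2\}=\{H_0+\ms{Z},J_a\}=0$, and run a bootstrap on the drift driven by $\ms{R}$ using the vector-field bound and \eqref{form217}. One small fix: the ``vector-field lemma'' you need is \Cref{le32} (built on \Cref{le31}), not \Cref{le2}, and the paper works with the ball $B_s(5\varepsilon/3)$ (i.e.\ $R=10\rho\varepsilon/3$) rather than your $R=20\rho\varepsilon/3$, which is what makes the stated constants $c_1,c_2$ come out exactly.
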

%%%%%
\begin{remark}
\label{re21}
Let $\mc{I}_{l}:=\big\{\bs{j}=(\delta_{k},a_{k})_{k=1}^{l}\mid \exists\;\text{permutation}\;\sigma,\;\text{s.t.}\;\forall k,\; \delta_{k}=-\delta_{\sigma(k)},\;a_{k}=a_{\sigma(k)}\big\}$. If
there exist $\tau\geq0,\gamma>0$ such that for any $l\leq r$ and $\bs{j}\in(\mb{U}_{2}\times\mb{Z}^{d})^{l}\backslash\mc{I}_{l}$, the non-resonant conditions \eqref{form218} hold, then the normal form $\ms{Z}$ only depends on the action $I_{a}:=z_{a}\bar{z}_{a}$ and the estimate \eqref{form223} in Corollary \ref{co21} can be replaced by
\begin{equation}
\label{form224}
\sup_{a\in\mb{Z}^{d}}\langle a\rangle^{2s}|I_{a}(t)-I_{a}(0)|\leq c_2\varepsilon^{3-\nu},\;\text{for}\;|t|\leq c_{1}\varepsilon^{-(r+\nu)}.
\end{equation}
\end{remark}
%%%%%

%=========================================================
\section{Proof of normal form theorem and its corollary}
\label{sec3}
Before proving the normal form theorem, we firstly estimate the vector field of  polynomial functions by the following two lemmas.
%%%%%
\begin{lemma}
\label{le31}
Let $f$ be homogenous polynomial of order $r\geq3$ satisfying $|f|_{s_{0},N}<+\infty$ with $N\geq s_{0}+d+1$.  Then for any $s\in[s_{0}+\frac{d+1}{2},N-\frac{d+1}{2}]$, one has
\begin{equation}
\label{form31}
\|X_{f}(z)\|_{s}\leq|f|_{s_{0},N}(\rho\|z\|_{s})^{r-1}
\end{equation}
with the constant $\rho=2^{d+4}3^{d}$.
\end{lemma}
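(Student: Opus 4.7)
The plan is to unfold $X_f$ component by component from the symmetric representation of $f$, bound each coefficient through the $s_0,N$-norm, and reduce the estimate on $\|X_f\|_s$ to a weighted $\ell^2$-convolution inequality on $\mb{Z}^d$. By the symmetry of $\tilde{f}_{\bs{j}}$, the $j$-component of the vector field is
$$(X_f)_j = \pm {\rm i}\,r\sum_{\bs{j'}\in(\mb{U}_2\times\mb{Z}^d)^{r-1}}\tilde{f}_{(\bar{j},\bs{j'})}\,z_{\bs{j'}},$$
and definition \eqref{form27} gives, for any $\bs{j}=(\bar{j},\bs{j'})$ with $\mc{M}(\bs{j})=b$,
$$|\tilde{f}_{\bs{j}}| \leq |f|_{s_0,N}\,\langle b\rangle^{-(N-s_0)}\Big(\frac{j_2^*\cdots j_r^*}{j_1^*}\Big)^{s_0},$$
where $j_1^*\geq\cdots\geq j_r^*$ is the decreasing rearrangement of $\langle j\rangle,\langle j'_1\rangle,\dots,\langle j'_{r-1}\rangle$. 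These two identities already encode the whole problem, and what remains is weight bookkeeping.

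Next, to evaluate $\|X_f\|_s^2=\sum_j\langle j\rangle^{2s}|(X_f)_j|^2$, I would split the sum according to whether $\langle j\rangle=j_1^*$ or $\langle j\rangle<j_1^*$. In the latter case $\langle j\rangle$ is dominated by some $\langle j'_k\rangle$, and the weight $\langle j\rangle^s$ redistributes directly onto one of the $z_{\bs{j'}}$ factors; the $(j_2^*\cdots j_r^*/j_1^*)^{s_0}$ penalty also distributes cleanly across the remaining indices. In the former case I would invoke the elementary remark $j_1^*\leq\langle\mc{M}(\bs{j})\rangle j_2^*\cdots j_r^*$ recorded just after \eqref{form27} in order to trade $(j_1^*)^{s-s_0}$ against $\langle b\rangle^{s-s_0}(j_2^*\cdots j_r^*)^{s-s_0}$; the surplus $\langle b\rangle^{s-s_0}$ is then absorbed by the factor $\langle b\rangle^{-(N-s_0)}$ from the $s_0,N$-norm, using precisely $N-s\geq(d+1)/2$ to leave a $b$-summable tail.

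After this redistribution, the resulting pointwise bound for $(X_f)_j$ is a weighted convolution in $\mb{Z}^d$, and I would close the estimate by $r-1$ applications of Cauchy--Schwarz in $\bs{j'}$ together with the summability $\sum_{b\in\mb{Z}^d}\langle b\rangle^{-(d+1)}\leq C_d$ that the paper isolates in Lemma \ref{le1}. The explicit constant $\rho=2^{d+4}3^d$ should emerge by carefully tracking dimensional factors: a $3^d$ from counting lattice points inside a box produced by the triangle inequality, combined with the factors of $2$ generated by the $\mb{U}_2$ summation, the Cauchy--Schwarz steps, and the explicit value of the tail sum. I expect this bookkeeping to be packaged by invoking Lemma \ref{le2} of the Appendix, advertised in the introduction as the general-purpose vector-field bound for monomials.

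The main obstacle is the redistribution step when $\langle j\rangle=j_1^*$: because the $s,N$-norm only offers the exponent $s_0$ on $j_1^*/(j_2^*\cdots j_r^*)$, upgrading to the $\ell^2_s$ norm forces a trade of $(j_1^*)^{s-s_0}$ against momentum conservation, paid out of the $\langle b\rangle^{-(N-s_0)}$ budget. This is exactly where the hypothesis $N\geq s+\frac{d+1}{2}$ is used sharply, and where the combinatorial constants have to be controlled to produce the explicit $\rho$. Once this redistribution is set up cleanly, the rest of the argument reduces to a standard multilinear $\ell^2$-convolution estimate on $\mb{Z}^d$.
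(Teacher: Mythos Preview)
Your overall architecture is right, but the weight redistribution you describe has a genuine gap, and the paper's proof avoids it by a trick you did not use.

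\medskip

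\textbf{The gap.} In the case $\langle j\rangle=j_1^*$ you trade $(j_1^*)^{s-s_0}$ for $\langle b\rangle^{s-s_0}(j_2^*\cdots j_r^*)^{s-s_0}$, which together with the coefficient bound leaves
\[
\langle j\rangle^{s}\,|\tilde f_{(\bar j,\bs{j'})}|\;\le\;|f|_{s_0,N}\,\langle b\rangle^{-(N-s)}\prod_{k=1}^{r-1}\langle j'_k\rangle^{s}.
\]
Now every $z_{j'_k}$ carries the full weight $\langle j'_k\rangle^{s}$, so the sequence $w_{j'}=\langle j'\rangle^{s}|z_{j'}|$ is only in $\ell^2$ (with $\|w\|_{\ell^2}=\|z\|_s$), and the $j$-index carries no residual weight at all. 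The remaining object is an $(r{-}1)$-fold convolution of $\ell^2$ sequences against a kernel $y_b=\langle b\rangle^{-(N-s)}$; with $N-s\ge (d+1)/2$ you have $y\in\ell^2$ but not $y\in\ell^1$, and an iterated convolution of $\ell^2$ sequences does not stay in $\ell^2$. Your sentence ``leave a $b$-summable tail'' is exactly where this fails: $\sum_b\langle b\rangle^{-(d+1)/2}$ diverges, and the later appeal to $\sum_b\langle b\rangle^{-(d+1)}$ from Lemma~\ref{le1} does not match the decay you actually produced.

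\medskip

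\textbf{What the paper does differently.} The paper never splits into the cases $\langle j\rangle=j_1^*$ versus $\langle j\rangle<j_1^*$, and never invokes $j_1^*\le\langle\mc{M}(\bs{j})\rangle j_2^*\cdots j_r^*$ directly. Instead it uses the monotonicity $|f|_{s',N}\le|f|_{s_0,N}$ for $s'\ge s_0$ (this is the remark after \eqref{form27}) to replace $s_0$ by $s'=s-\frac{d+1}{2}$ from the outset. With this choice the coefficient weight $(j_2^*\cdots j_r^*/j_1^*)^{s-(d+1)/2}$, combined with $\langle j\rangle^{s}$, redistributes \emph{uniformly} (no case analysis) as
\[
\langle j\rangle^{(d+1)/2}\prod_{k=1}^{r-1}\langle j'_k\rangle^{s-(d+1)/2},
\]
using only $\langle j\rangle\le j_1^*$. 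The point is that each $z_{j'_k}$ now carries weight $s-\frac{d+1}{2}$, i.e.\ one works with $z'_{j'}=\langle j'\rangle^{s-(d+1)/2}|z_{j'}|$, and a weight $(d+1)/2$ stays on the $j$-index. The whole estimate then becomes $\|y*z'*\cdots*z'\|_{(d+1)/2}$ with $\|z'\|_{(d+1)/2}=\|z\|_s$, and Lemma~\ref{le2} says precisely that $\|\cdot\|_{(d+1)/2}$ is a convolution algebra with constant $\rho^{1/2}$. The kernel $y$ has $\|y\|_{(d+1)/2}^2=\sum_b\langle b\rangle^{2(s-N)}\le\sum_b\langle b\rangle^{-(d+1)}\le 3^d$ by Lemma~\ref{le1}, which is where both hypotheses $s\ge s_0+\frac{d+1}{2}$ and $s\le N-\frac{d+1}{2}$ are used sharply. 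The factor $r$ and the $3^{d/2}$ from $\|y\|$ are absorbed into one extra power of $\rho$.

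\medskip

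Your plan can be repaired by leaving a weight $(d+1)/2$ on the $j$-index and putting only $s-\frac{d+1}{2}$ on each $z$-factor; once you do that, the case split becomes superfluous and you recover the paper's argument.
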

%%%
\begin{proof}
Write
$$f(z)=\sum_{\bs{j}\in(\mb{U}_{2}\times\mb{Z}^{d})^{r}}\tilde{f}_{\bs{j}}z_{\bs{j}}=\sum_{b\in\mb{Z}^{d}}\sum_{\mc{M}(\bs{j})=b}\tilde{f}_{\bs{j}}z_{\bs{j}}$$
and $z'_{j_{i}}=\langle j_{i}\rangle^{s-\frac{d+1}{2}}|z_{j_{i}}|, \forall i=1,\cdots,r$. By the defintion of norm, one has
\begin{align}
\label{form32}
&\|X_{f}(z)\|_{s}\leq\sum_{b\in\mb{Z}^{d}}\Big(\sum_{j_{r}}\langle j_{r}\rangle^{2s}\big|r\sum_{\substack{j_{1},\cdots,j_{r-1}\\\mc{M}(\bs{j})=b}}\tilde{f}_{\bs{j}}z_{j_{1}}\cdots z_{j_{r-1}}\big|^{2}\Big)^{\frac{1}{2}}
\\\notag\leq&r\sum_{b\in\mb{Z}^{d}}\Big(\langle b\rangle^{N-(s-\frac{d+1}{2})}\sup_{\mc{M}(\bs{j})=b}\big|(\frac{j_{1}^{*}}{j_{2}^{*}\dots j_{r}^{*}})^{s-\frac{d+1}{2}}\tilde{f}_{\bs{j}}\big|
\\\notag&\quad\qquad\big(\sum_{j_{r}}\langle j_{r}\rangle^{2s}\big|\sum_{\substack{j_{1},\cdots,j_{r-1}\\\mc{M}(\bs{j})=b}}\langle b\rangle^{s-\frac{d+1}{2}-N}(\frac{j_{2}^{*}\dots j_{r}^{*}}{j_{1}^{*}})^{s-\frac{d+1}{2}}|z_{j_{1}}|\cdots|z_{j_{r-1}}|\big|^{2}\big)^{\frac{1}{2}}\Big)
\\\notag\leq&r|f|_{s-\frac{d+1}{2},N}\sup_{b\in\mb{Z}^{d}}\big(\sum_{j_{r}}\langle j_{r}\rangle^{2s}\big|\sum_{\substack{j_{1},\cdots,j_{r-1}\\\mc{M}(\bs{j})=b}}\langle b\rangle^{s-\frac{d+1}{2}-N}\big(\frac{j_{2}^{*}\dots j_{r}^{*}}{j_{1}^{*}}\big)^{s-\frac{d+1}{2}}|z_{j_{1}}|\cdots|z_{j_{r-1}}|\big|^{2}\big)^{\frac{1}{2}}
\\\notag\leq&r|f|_{s_{0},N}\sup_{b\in\mb{Z}^{d}}\big(\sum_{j_{r}}\langle j_{r}\rangle^{d+1}\big|\sum_{\substack{j_{1},\cdots,j_{r-1}\\\mc{M}(\bs{j})=b}}\langle b\rangle^{s-\frac{d+1}{2}-N}z'_{j_{1}}\cdots z'_{j_{r-1}}\big|^{2}\big)^{\frac{1}{2}}
\\\notag\leq&r|f|_{s_{0},N}\big(\sum_{j_{r}}\langle j_{r}\rangle^{d+1}\big|\sum_{\substack{j_{1},\cdots,j_{r-1},b\\\mc{M}(\bs{j},(-1,b))=0}}\langle b\rangle^{s-\frac{d+1}{2}-N}z'_{j_{1}}\cdots z'_{j_{r-1}}\big|^{2}\big)^{\frac{1}{2}}
\\\notag\leq&r|f|_{s_{0},N}\|y\ast\underbrace{z'\ast\cdots\ast z'}_{r-1}\|_{\frac{d+1}{2}},
\end{align}
where $y=\{y_{j}\}_{j\in\mb{U}_{2}\times\mb{Z}^{d}}$ with $y_{(1,b)}=0$ and $y_{(-1,b)}=\langle b\rangle^{s-\frac{d+1}{2}-N}$.
For any $u=\{u_{j}\}_{j\in\mb{U}_{2}\times\mb{Z}^{d}}$ and $v=\{v_{j}\}_{j\in\mb{U}_{2}\times\mb{Z}^{d}}$, by H\"{o}lder inequality and Lemma \ref{le2} in Appendix, one has
\begin{align*}
\|u\ast v\|_{\frac{d+1}{2}}^{2}=&\sum_{j}\langle j\rangle^{d+1}|\sum_{\substack{j_{1},j_{2}\\\mc{M}(j_{1},j_{2},j)=0}}u_{j_{1}}v_{j_{2}}|^{2}
\\\leq&\sum_{j}\langle j\rangle^{d+1}\big(\sum_{\substack{j_{1},j_{2}\\\mc{M}(j_{1},j_{2},j)=0}}\frac{1}{\langle j_{1}\rangle^{d+1}\langle j_{2}\rangle^{d+1}}\big)\big(\sum_{\substack{j_{1},j_{2}\\\mc{M}(j_{1},j_{2},j)=0}}\langle j_{1}\rangle^{d+1}|u_{j_{1}}|^{2}\langle j_{2}\rangle^{d+1}|v_{j_{2}}|^{2}\big)
\\\leq&\sum_{j}2^{d+3}3^{d}\sum_{\substack{j_{1},j_{2}\\\mc{M}(j_{1},j_{2},j)=0}}\langle j_{1}\rangle^{d+1}|u_{j_{1}}|^{2}\langle j_{2}\rangle^{d+1}|v_{j_{2}}|^{2}
\\=&\rho\|u\|_{\frac{d+1}{2}}^{2}\|v\|_{\frac{d+1}{2}}^{2}.
\end{align*}
By induction, one has
\begin{equation}
\label{form33}
\|y\ast\underbrace{z'\ast\cdots\ast z'}_{r-1}\|_{\frac{d+1}{2}}^{2}\leq\rho^{r-1}\|y\|_{\frac{d+1}{2}}^{2}\|z'\|_{\frac{d+1}{2}}^{2(r-1)}.
\end{equation}
Notice that
\begin{equation}
\label{form34}
\|z'\|_{\frac{d+1}{2}}^{2}=\sum_{j}\langle j\rangle^{d+1}|z'_{j}|^{2}=\sum_{j}\langle j\rangle^{2s}|z_{j}|^{2}=\|z\|_{s}^{2}.
\end{equation}
For any $s\leq N-\frac{d+1}{2}$, by Lemma \ref{le1} in Appendix, one has
\begin{equation}
\label{form35}
\|y\|_{\frac{d+1}{2}}^{2}=\sum_{b\in\mb{Z}^{d}}\langle b\rangle^{d+1}\langle b\rangle^{2s-(d+1)-2N}\leq\sum_{b\in\mb{Z}^{d}}\langle b\rangle^{-(d+1)}\leq3^{d}.
\end{equation}
By \eqref{form32}-\eqref{form35} and the fact that $3^{\frac{d}{2}}r<\rho^{\frac{r-1}{2}}$, one has
$$\|X_{f}(z)\|_{s}\leq 3^{\frac{d}{2}}r|f|_{s_{0},N}\rho^{\frac{r-1}{2}}\|z\|_{s}^{r-1}<|f|_{s_{0},N}(\rho\|z\|_{s})^{r-1}.$$
\end{proof}
%%%%%
\begin{lemma}
\label{le32}
Let $f=\sum_{i\geq1}f_{i}$ with $f_{i}$  a homogenous polynomial of order $i+2$, and $\langle |f|\rangle_{s_{0},N}^{R}<+\infty$ with $N\geq s_{0}+d+1$. Then for any $s\in[s_{0}+\frac{d+1}{2},N-\frac{d+1}{2}]$, one has
\begin{equation}
\label{form36}
\sup_{\|z\|_{s}\leq R/\rho}\|X_{f}(z)\|_{s}\leq R\langle |f|\rangle_{s_{0},N}^{R}
\end{equation}
with the constant $\rho=2^{d+4}3^{d}$.
\end{lemma}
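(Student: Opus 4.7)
The plan is to reduce this to Lemma \ref{le31} term-by-term and then sum the resulting geometric-type series using the bound on $\|z\|_s$.

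First I would use linearity of the vector field: since $f=\sum_{i\geq 1}f_i$ with $f_i$ homogeneous of order $i+2$, we have $X_f=\sum_{i\geq 1}X_{f_i}$, and hence by the triangle inequality $\|X_f(z)\|_s\leq\sum_{i\geq 1}\|X_{f_i}(z)\|_s$. Each $f_i$ is a homogeneous polynomial of order $i+2\geq 3$ with $|f_i|_{s_0,N}<\infty$, so Lemma \ref{le31} applies with the exponent $r-1$ replaced by $i+1$, giving
\begin{equation*}
\|X_{f_i}(z)\|_s\leq |f_i|_{s_0,N}(\rho\|z\|_s)^{i+1}.
\end{equation*}

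Next, under the hypothesis $\|z\|_s\leq R/\rho$, we have $\rho\|z\|_s\leq R$, and therefore
\begin{equation*}
\|X_f(z)\|_s\leq\sum_{i\geq 1}|f_i|_{s_0,N}R^{i+1}=R\sum_{i\geq 1}|f_i|_{s_0,N}R^{i}=R\,\langle|f|\rangle_{s_0,N}^{R},
\end{equation*}
by the definition \eqref{form28}. Taking the supremum over the ball $\|z\|_s\leq R/\rho$ yields the claim.

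I do not expect a serious obstacle: once Lemma \ref{le31} is available, the essential content is only that the series defining $\langle|f|\rangle_{s_0,N}^{R}$ converges and matches the summed vector field bound after factoring out one power of $R$. The mild point to check is that the range $s\in[s_0+\frac{d+1}{2},N-\frac{d+1}{2}]$ assumed here is exactly the range required for Lemma \ref{le31} to apply to every $f_i$, which it is, so the term-by-term application is uniform in $i$.
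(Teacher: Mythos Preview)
Your proof is correct and follows exactly the same approach as the paper: apply Lemma \ref{le31} to each homogeneous piece $f_i$, use $\rho\|z\|_s\leq R$ to bound $(\rho\|z\|_s)^{i+1}\leq R^{i+1}$, and sum to recover $R\langle|f|\rangle_{s_0,N}^R$.
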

%%%
\begin{proof}
By Lemma \ref{le31}, one has
\begin{equation*}
\sup_{\|z\|_{s}\leq  R/\rho}\|X_{f}(z)\|_{s}
\leq\sum_{i\geq1}\sup_{\|z\|_{s}\leq  R/\rho}\|X_{f_{i}}(z)\|_{s}
\leq\sum_{i\geq1}|f_{i}|_{s_{0},N}R^{i+1}
=R\langle |f|\rangle_{s_{0},N}^{R}.
\end{equation*}
\end{proof}
%%%%%
Consider an analytic function $\chi$ and its corresponding Hamiltonian equation $\dot{z}=X_{\chi}(z)$. Denote by $\Phi_{\chi}^{t}$ the corresponding flow and the time 1 flow $\Phi_{\chi}^{1}$ is called the Lie transformation generated by $\chi$.
Given analytic function $f$, one has
\begin{equation}
\label{form37}
f\circ\Phi_{\chi}^{1}=\sum_{k=0}^{+\infty}\frac{1}{k!}\{\underbrace{\chi,\{\cdots,\{\chi}_{k},f\}\cdots\}\}:=\sum_{k=0}^{+\infty}\frac{1}{k!}ad_{\chi}^{k}f,
\end{equation}
where $ad_{\chi}^{0}f=f, ad_{\chi}^{1}f=\{\chi,f\}$. Next, We will estimate the terms of the series \eqref{form37} in the following lemmas.
%%%%%
\begin{lemma}
\label{le33}
Let $f, g$ be homogenous polynomials of order $r_{1}$ and $r_{2}$, respectively. Assume that $|f|_{s,N}, |g|_{s,N}<+\infty$, then one has
\begin{equation}
\label{form38}
|\{f,g\}|_{s,N}\leq2r_{1}r_{2}|f|_{s,N}|g|_{s,N}.
\end{equation}
\end{lemma}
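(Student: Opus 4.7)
The plan is to expand the Poisson bracket in coordinates and then track how the two ingredients of the $s,N$-norm, the weight $\langle b\rangle^{N-s}$ and the ratio $(j_1^\ast/(j_2^\ast\cdots))^s$, behave under contraction of a single pair of indices. Using the symmetry of $\tilde f,\tilde g$ and the identity $\partial_{z_j}f=r_1\sum_{\bs{j}'}\tilde f_{(j,\bs{j}')}z_{\bs{j}'}$, the definition \eqref{form23} gives
\begin{equation*}
\{f,g\}=-\mathrm{i}\,r_1r_2\sum_{j\in\mb{U}_2\times\mb{Z}^d}\delta\sum_{\bs{j}',\bs{k}'}\tilde f_{(j,\bs{j}')}\tilde g_{(\bar j,\bs{k}')}z_{\bs{j}'}z_{\bs{k}'}.
\end{equation*}
Writing $\bs{l}=(\bs{j}',\bs{k}')$ of length $r_1+r_2-2$ and symmetrizing in $\bs{l}$, the symmetrization factor $(r_1-1)!(r_2-1)!/(r_1+r_2-2)!$ and the number of partitions $\binom{r_1+r_2-2}{r_1-1}$ cancel, so bounding the resulting average over partitions by the maximum yields
\begin{equation*}
|\widetilde{\{f,g\}}_{\bs{l}}|\leq r_1r_2\max_{\bs{l}=\bs{l}^{(1)}\sqcup\bs{l}^{(2)}}\sum_{j}|\tilde f_{(j,\bs{l}^{(1)})}|\,|\tilde g_{(\bar j,\bs{l}^{(2)})}|.
\end{equation*}

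The heart of the proof is the pointwise ratio inequality
\begin{equation*}
\frac{l_1^\ast}{l_2^\ast\cdots l_{r_1+r_2-2}^\ast}\cdot\frac{j_2^\ast\cdots j_{r_1}^\ast}{j_1^\ast}\cdot\frac{k_2^\ast\cdots k_{r_2}^\ast}{k_1^\ast}\leq 1,
\end{equation*}
where $j_i^\ast$ (resp.\ $k_i^\ast$) is the decreasing rearrangement of $\langle\cdot\rangle$-values on $(j,\bs{l}^{(1)})$ (resp.\ $(\bar j,\bs{l}^{(2)})$). Setting $c:=\langle j\rangle=\langle\bar j\rangle$, cancellation of the shared factors $\prod_{\bs{l}^{(1)}}\langle\cdot\rangle\prod_{\bs{l}^{(2)}}\langle\cdot\rangle$ between numerator and denominator collapses the left-hand side to $(l_1^\ast c)^2/(j_1^\ast k_1^\ast)^2$, and this is $\leq 1$ by case analysis: if $c<j_1^\ast$ then $j_1^\ast$ lies in $\bs{l}^{(1)}$ so $l_1^\ast\geq j_1^\ast$ and hence $l_1^\ast c\leq j_1^\ast k_1^\ast$, symmetrically if $c<k_1^\ast$, and the remaining case $c=j_1^\ast=k_1^\ast$ forces $l_1^\ast\leq c$ directly. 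Raising to the power $s\geq 0$ and combining with the pointwise consequences of the definitions of $|f|_{s,N}$ and $|g|_{s,N}$ one obtains
\begin{equation*}
\left(\frac{l_1^\ast}{l_2^\ast\cdots l_{r_1+r_2-2}^\ast}\right)^s|\tilde f_{(j,\bs{l}^{(1)})}|\,|\tilde g_{(\bar j,\bs{l}^{(2)})}|\leq F(b_1)\,G(b_2),
\end{equation*}
where $b_1=\mc{M}((j,\bs{l}^{(1)}))$, $b_2=\mc{M}((\bar j,\bs{l}^{(2)}))$, and $F,G$ denote the pointwise $\mc{M}$-supremum factors appearing in \eqref{form27}.

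To finish, I will reinsert $\langle b\rangle^{N-s}$ with $b=b_1+b_2=\mc{M}(\bs{l})$ and split the weight using $N\geq s$ together with $\langle b\rangle\leq\langle b_1\rangle\langle b_2\rangle$, giving $\langle b\rangle^{N-s}\leq\langle b_1\rangle^{N-s}\langle b_2\rangle^{N-s}$. As $j=(\delta,a)$ ranges over $\mb{U}_2\times\mb{Z}^d$, each pair $(b_1,b_2)$ with $b_1+b_2=b$ is attained at most twice (once per sign), producing an overall factor $2$. Exchanging summation order then gives
\begin{equation*}
|\{f,g\}|_{s,N}\leq 2r_1r_2\sum_{b_1}\langle b_1\rangle^{N-s}F(b_1)\sum_{b_2}\langle b_2\rangle^{N-s}G(b_2)=2r_1r_2|f|_{s,N}|g|_{s,N},
\end{equation*}
which is the required bound.

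The main difficulty is the pointwise ratio inequality. The key mechanism is that the two contracted indices $j$ and $\bar j$ share the same magnitude $c$, so after cancelling all shared $\langle\cdot\rangle$-values the residual expression collapses to the single clean square $(l_1^\ast c/(j_1^\ast k_1^\ast))^2$, which is $\leq 1$ in every placement of $c$. This is precisely the structural reason why the norm \eqref{form27}, built around $j_1^\ast/(j_2^\ast\cdots j_l^\ast)$, is preserved under Poisson brackets without losing any factor, in contrast to the naive variants based on $a_3^\ast\cdots$ mentioned in the introduction.
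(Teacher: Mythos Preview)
Your approach is essentially the paper's: expand the bracket, prove the pointwise ratio inequality, split $\langle b\rangle^{N-s}\leq\langle b_1\rangle^{N-s}\langle b_2\rangle^{N-s}$, and trade the sum over the contracted index $j$ for a sum over $b_1$ at the cost of a factor~$2$. Your reduction of the ratio inequality to the clean square $(l_1^\ast c)^2/(j_1^\ast k_1^\ast)^2\leq 1$ is in fact neater than the paper's three-case computation.

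However, your case analysis for that square is miswritten. In the case $c<j_1^\ast$ you correctly deduce that $j_1^\ast$ comes from $\bs{l}^{(1)}$ and hence $l_1^\ast\geq j_1^\ast$, but the inequality $l_1^\ast\geq j_1^\ast$ points the wrong way: from it and $k_1^\ast\geq c$ you cannot conclude $l_1^\ast c\leq j_1^\ast k_1^\ast$. The clean fix is to slice the cases by where $l_1^\ast$ lives rather than by the size of $c$: since $l_1^\ast$ is achieved in $\bs{l}^{(1)}$ or $\bs{l}^{(2)}$, say $\bs{l}^{(1)}$, one has $j_1^\ast=\max(c,\max_{\bs{l}^{(1)}}\langle\cdot\rangle)\geq l_1^\ast$ and $k_1^\ast\geq c$, whence $j_1^\ast k_1^\ast\geq l_1^\ast c$; the other case is symmetric. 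With this correction your argument is complete and matches the paper's.
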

%%%
\begin{proof}
Write $f,g$ and $\{f,g\}$ the symmetric form explicitly as follows:
$$f(z)=\sum_{\bs{j}'\in(\mb{U}_{2}\times\mb{Z}^{d})^{r_{1}}}\tilde{f}_{\bs{j}'}z_{\bs{j}'}, \qquad
g(z)=\sum_{\bs{j}''\in(\mb{U}_{2}\times\mb{Z}^{d})^{r_{2}}}\tilde{g}_{\bs{j}''}z_{\bs{j}''}$$
and
$$\varphi(z):=\{f,g\}=\sum_{\bs{j}\in(\mb{U}_{2}\times\mb{Z}^{d})^{r_{1}+r_{2}-2}}\tilde{\varphi}_{j_{1},\cdots,j_{r_{1}-1},j_{r_{1}},\cdots,j_{r_{1}+r_{2}-2}}z_{\bs{j}}$$
with $j_{1},\cdots,j_{r_{1}-1}$ of $f$ and $j_{r_{1}},\cdots,j_{r_{1}+r_{2}-2}$ of $g$.
By the definition of Poisson bracket, one has
$$|\tilde{\varphi}_{j_{1},\cdots,j_{r_{1}-1},j_{r_{1}},\cdots,j_{r_{1}+r_{2}-2}}|\leq r_{1}r_{2}|\sum_{j\in\mb{U}_{2}\times\mb{Z}^{d}}\tilde{f}_{j_{1},\cdots,j_{r_{1}-1},j}\tilde{g}_{j_{r_{1}},\cdots,j_{r_{1}+r_{2}-2},\bar{j}}|.$$
So
$$|\varphi|_{s,N}\leq r_{1}r_{2}\sum_{b\in\mb{Z}^{d}}\langle b\rangle^{N-s}\sup_{\mc{M}(\bs{j})=b}|(\frac{j_{1}^{*}}{j_{2}^{*}\dots j_{r_{1}+r_{2}-2}^{*}})^{s}\sum_{j\in\mb{U}_{2}\times\mb{Z}^{d}}\tilde{f}_{j_{1},\cdots,j_{r_{1}-1},j}\tilde{g}_{j_{r_{1}},\cdots,j_{r_{1}+r_{2}-2},\bar{j}}|.$$
Might as well let $\langle j_{1}\rangle\geq\cdots\geq\langle j_{r_{1}-1}\rangle, \langle j_{r_{1}}\rangle\geq\cdots\geq\langle j_{r_{1}+r_{2}-2}\rangle$ and $\langle j_{1}\rangle\geq\langle j_{r_{1}}\rangle$. By $\langle j\rangle=\langle\bar{j}\rangle$, one has
\begin{align*}
\frac{j_{1}^{*}}{j_{2}^{*}\dots j_{r_{1}+r_{2}-2}^{*}}&=
\frac{\langle  j_{1}\rangle}{\langle j_{2}\rangle\dots\langle j_{r_{1}+r_{2}-2}\rangle}
\\&\begin{cases}
&\leq\frac{\langle j\rangle}{\langle j_{1}\rangle\dots\langle j_{r_{1}-1}\rangle}
  \frac{\langle \bar{j}\rangle}{\langle j_{r_{1}}\rangle\dots\langle j_{r_{1}+r_{2}-2}\rangle},
  \;\;\,\qquad\text{when}\ \langle j\rangle\geq\langle j_{1}\rangle\geq\langle j_{r_{1}}\rangle;
\\&=\frac{\langle j_{1}\rangle}{\langle j_{2}\rangle\dots\langle j_{r_{1}-1}\rangle\langle j\rangle}
  \frac{\langle \bar{j}\rangle}{\langle j_{r_{1}}\rangle\dots\langle j_{r_{1}+r_{2}-2}\rangle},
  \quad\quad\text{when}\ \langle j_{1}\rangle\geq\langle j\rangle\geq\langle j_{r_{1}}\rangle;
\\&\leq\frac{\langle j_{1}\rangle}{\langle j_{2}\rangle\dots\langle j_{r_{1}-1}\rangle\langle j\rangle}
  \frac{\langle j_{r_{1}}\rangle}{\langle j_{r_{1}+1}\rangle\dots\langle j_{r_{1}+r_{2}-2}\rangle\langle \bar{j}\rangle},
  \,\;\text{when}\ \langle j_{1}\rangle\geq\langle j_{r_{1}}\rangle\geq\langle j\rangle
  \end{cases}
  \\&=\frac{j_{1}^{'*}}{j_{2}^{'*}\dots j_{r_{1}}^{'*}}\frac{j_{1}^{''*}}{j_{2}^{''*}\dots j_{r_{2}}^{''*}}.
\end{align*}
Notice that $\mc{M}(\bs{j})=\mc{M}(\bs{j}')+\mc{M}(\bs{j}'')$ and $\langle\mc{M}(\bs{j})\rangle\leq\langle\mc{M}(\bs{j}')\rangle\langle\mc{M}(\bs{j}'')\rangle$.
Hence, one has
\begin{align*}
|\varphi|_{s,N}\leq&r_{1}r_{2}\sum_{\substack{b\in\mb{Z}^{d}\\j\in\mb{U}_{2}\times\mb{Z}^{d}}}\sup_{\mc{M}(\bs{j})=b}|\langle b\rangle^{N-s}(\frac{j_{1}^{*}}{j_{2}^{*}\dots j_{r_{1}+r_{2}-2}^{*}})^{s}\tilde{f}_{j_{1},\cdots,j_{r_{1}-1},j}\tilde{g}_{j_{r_{1}},\cdots,j_{r_{1}+r_{2}-2},\bar{j}}|
\\\leq&2r_{1}r_{2}\sum_{b,b'\in\mb{Z}^{d}}\sup_{\substack{\mc{M}(\bs{j})=b\\\mc{M}(\bs{j}')=b'}}|\langle b\rangle^{N-s}(\frac{j_{1}^{*}}{j_{2}^{*}\dots j_{r_{1}+r_{2}-2}^{*}})^{s}\tilde{f}_{j_{1},\cdots,j_{r_{1}-1},j}\tilde{g}_{j_{r_{1}},\cdots,j_{r_{1}+r_{2}-2},\bar{j}}|
\\\leq&2r_{1}r_{2}\sum_{b',b''\in\mb{Z}^{d}}\sup_{\substack{\mc{M}(\bs{j}')=b'\\\mc{M}(\bs{j}'')=b''}}|\langle b'\rangle^{N-s}\langle b''\rangle^{N-s}(\frac{j_{1}^{'*}}{j_{2}^{'*}\dots j_{r_{1}}^{'*}})^{s}(\frac{j_{1}^{''*}}{j_{2}^{''*}\dots j_{r_{2}}^{''*}})^{s}\tilde{f}_{\bs{j}'}\tilde{g}_{\bs{j}''}|
\\=&2r_{1}r_{2}|f|_{s,N}|g|_{s,N}.
\end{align*}
\end{proof}
%%%%%
\begin{lemma}
\label{le34}
Let $\chi$ be a homogenous polynomial of order $r\geq3$ and $f=\sum_{i\geq1}f_{i}$ with $f_{i}$ a homogenous polynomial of order $i+2$. Assume that $|\chi|_{s,N}, \langle |f|\rangle_{s,N}^{R}<+\infty$, then for any $\theta\in(0,\frac{1}{2}]$, one has
\begin{equation}
\label{form39}
\langle|\{\chi,f\}|\rangle_{s,N}^{R(1-\theta)}\leq\frac{4r}{e\theta}\langle|f|\rangle_{s,N}^{R}|\chi|_{s,N}R^{r-2}.
\end{equation}
\end{lemma}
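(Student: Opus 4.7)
The plan is to decompose $\{\chi,f\}$ along the degree filtration, apply the bilinear Poisson bracket bound of Lemma \ref{le33} termwise, and then absorb the resulting $i$-dependent weight by a one-variable calculus argument.

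First I would observe that since $\chi$ is homogeneous of order $r$ and $f_i$ is homogeneous of order $i+2$, the bracket $\{\chi,f_i\}$ is homogeneous of order $r+i\geq 4$. Hence by the definition \eqref{form28},
\begin{equation*}
\langle|\{\chi,f\}|\rangle_{s,N}^{R(1-\theta)} \;=\; \sum_{i\geq 1}|\{\chi,f_i\}|_{s,N}\,\bigl(R(1-\theta)\bigr)^{r+i-2}.
\end{equation*}
Lemma \ref{le33} with $r_1=r$ and $r_2=i+2$ yields $|\{\chi,f_i\}|_{s,N}\leq 2r(i+2)\,|\chi|_{s,N}\,|f_i|_{s,N}$. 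Factoring $|\chi|_{s,N}$, $R^{r-2}$, and the harmless $(1-\theta)^{r-2}\leq 1$ out of the series, the claim reduces to establishing the scalar estimate
\begin{equation*}
\sum_{i\geq 1}(i+2)(1-\theta)^{i}\,|f_i|_{s,N}\,R^{i} \;\leq\; \frac{C}{\theta}\,\langle|f|\rangle_{s,N}^{R}
\end{equation*}
for an appropriate constant $C$, which by a pointwise supremum bound is further reduced to the estimate $\sup_{i\geq 1}(i+2)(1-\theta)^{i}\leq C/\theta$.

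The real core of the proof is therefore this one-variable maximization. Differentiating the continuous extension $g(x)=(x+2)(1-\theta)^{x}$ locates its critical point at $x^{\ast}=-1/\ln(1-\theta)-2$; using the identity $(1-\theta)^{-1/\ln(1-\theta)}=e^{-1}$, one obtains
\begin{equation*}
g(x^{\ast}) \;=\; \frac{1}{e\,\bigl(-\ln(1-\theta)\bigr)\,(1-\theta)^{2}}.
\end{equation*}
The elementary inequalities $-\ln(1-\theta)\geq\theta$ (valid for all $\theta\in(0,1)$) together with $(1-\theta)^{2}\geq 1/4$ (valid for $\theta\leq 1/2$) then upgrade this to $g(x^{\ast})\leq 4/(e\theta)$, hence $\sup_{i\geq 1}(i+2)(1-\theta)^{i}\leq 4/(e\theta)$ on the stipulated range of $\theta$.

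Collecting the constants, inserting the supremum bound into the reduced scalar inequality, and combining with the prefactor $2r$ from Lemma \ref{le33} (with careful bookkeeping of the factors $(1-\theta)^{r-2}\leq 1$ and $r\geq 3$) produces the asserted constant $\tfrac{4r}{e\theta}$. I do not expect any serious obstacle beyond this single-variable optimization: the decomposition, the bracket estimate, and the factoring of powers of $R$ and $(1-\theta)$ are all routine once the degree accounting $\{\chi,f_i\}\in\mathcal{F}^{s,N}_{r+i}$ is properly tracked.
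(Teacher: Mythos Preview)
Your approach is essentially the paper's: decompose in degree, apply Lemma~\ref{le33} termwise, and bound a one-variable supremum. There is one bookkeeping slip in the constant. As you have written it, you discard $(1-\theta)^{r-2}\leq 1$ and then bound $\sup_{i\geq 1}(i+2)(1-\theta)^{i}\leq 4/(e\theta)$; combined with the prefactor $2r$ this gives $8r/(e\theta)$, twice the stated constant. The paper instead groups $(1-\theta)^{r+i-2}=(1-\theta)^{r-4}\cdot(1-\theta)^{i+2}$, uses the cleaner bound
\[
\sup_{i\geq 1}(i+2)(1-\theta)^{i+2}\leq\sup_{x>0}x(1-\theta)^{x}=\frac{1}{e\,(-\ln(1-\theta))}\leq\frac{1}{e\theta},
\]
and then observes $(1-\theta)^{r-4}\leq 2$ for $r\geq 3$ and $\theta\in(0,\tfrac12]$. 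This is equivalent to \emph{retaining} your factor $(1-\theta)^{r-2}$ and cancelling it against the $(1-\theta)^{-2}$ appearing in your value of $g(x^{\ast})$---which is presumably what your parenthetical about ``careful bookkeeping of the factors $(1-\theta)^{r-2}$ and $r\geq 3$'' is gesturing at, but it is not what the preceding steps actually do.
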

%%%
\begin{proof}
By Lemma \ref{le33}, one has
\begin{align*}
\langle|\{\chi,f\}|\rangle_{s,N}^{R(1-\theta)}&=\sum_{i\geq1}|\{\chi,f_{i}\}|_{s,N}R^{i+r-2}(1-\theta)^{i+r-2}
\\&\leq\sum_{i\geq1}2r(i+2)|\chi|_{s,N}R^{r-2}|f_{i}|_{s,N}R^{i}(1-\theta)^{i+r-2}
\\&\leq2r(1-\theta)^{r-4}\sup_{i\geq1}(i+2)(1-\theta)^{i+2}\langle|f|\rangle_{s,N}^{R}|\chi|_{s,N}R^{r-2}
\\&\leq\frac{2r(1-\theta)^{r-4}}{e\theta}\langle|f|\rangle_{s,N}^{R}|\chi|_{s,N}R^{r-2}
\\&\leq\frac{4r}{e\theta}\langle|f|\rangle_{s,N}^{R}|\chi|_{s,N}R^{r-2}.
\end{align*}
\end{proof}
%%%%%
\begin{lemma}
\label{le35}
Let $\chi$ be a homogenous polynomial of order $r\geq3$ and $f=\sum_{i\geq1}f_{i}$ with $f_{i}$ a homogenous polynomial of order $i+2$. Assume that $|\chi|_{s,N}, \langle |f|\rangle_{s,N}^{R}<+\infty$, then for any $\theta\in(0,\frac{1}{2}]$, one has
\begin{equation}
\label{form310}
\langle|\frac{1}{k!}ad_{\chi}^{k}f|\rangle_{s,N}^{R(1-\theta)}\leq(\frac{4r}{\theta}|\chi|_{s,N}R^{r-2})^{k}\langle|f|\rangle_{s,N}^{R}, \quad\forall k\geq1.
\end{equation}
\end{lemma}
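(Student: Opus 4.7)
The natural approach is to iterate Lemma \ref{le34} exactly $k$ times, distributing the total radius loss $\theta$ evenly over the $k$ steps. The base case $k=1$ is immediate from Lemma \ref{le34} since $4r/(e\theta) \leq 4r/\theta$. For general $k$, I would set $g_j := \frac{1}{j!}\,ad_{\chi}^{j}f$ and use the recursion $g_j = \tfrac{1}{j}\{\chi, g_{j-1}\}$ with $g_0 = f$. Choose intermediate radii $R_j := R(1-\theta/k)^j$, so that $R_0 = R$ and $R_k = R(1-\theta/k)^k$.

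At the $j$-th step, I would apply Lemma \ref{le34} to $g_{j-1}$ with base radius $R_{j-1}$ and shrinkage parameter $\theta/k$ (which lies in $(0,1/2]$ since $\theta \leq 1/2$), and then divide by $j$ to obtain
$$
\langle|g_{j}|\rangle_{s,N}^{R_{j}}
\;\leq\; \frac{4rk}{ej\theta}\,\langle|g_{j-1}|\rangle_{s,N}^{R_{j-1}}\,|\chi|_{s,N}\,R_{j-1}^{r-2}
\;\leq\; \frac{4rk}{ej\theta}\,\langle|g_{j-1}|\rangle_{s,N}^{R_{j-1}}\,|\chi|_{s,N}\,R^{r-2},
$$
using the crude bound $R_{j-1} \leq R$. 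Iterating from $j=1$ up to $j=k$ telescopes the constant into
$$
\prod_{j=1}^{k}\frac{4rk}{ej\theta} \;=\; \frac{(4rk)^{k}}{k!\,(e\theta)^{k}},
$$
which by the standard estimate $k! \geq (k/e)^{k}$ is bounded by $(4r/\theta)^{k}$.

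Finally, because every coefficient $|(g_k)_i|_{s,N}$ in the definition \eqref{form28} is nonnegative, the map $R' \mapsto \langle|g_k|\rangle_{s,N}^{R'}$ is monotone increasing, so the Bernoulli-type inequality $(1-\theta/k)^{k} \geq 1-\theta$ gives $R_k \geq R(1-\theta)$ and hence $\langle|g_k|\rangle_{s,N}^{R(1-\theta)} \leq \langle|g_k|\rangle_{s,N}^{R_k}$. Combining with the iterated bound yields the desired estimate \eqref{form310}.

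There is no substantive obstacle; this is a bookkeeping argument built on top of Lemma \ref{le34}. The only points requiring care are the choice of per-step shrinkage $\theta/k$ (so that $k$ uses of Lemma \ref{le34} cost at most radius $\theta$ in total), the factorial identity that converts $\prod_{j=1}^k (k/j)$ into $k^k/k!$ and then collapses via $k! \geq (k/e)^k$, and the final monotonicity/Bernoulli step that exchanges $R(1-\theta/k)^k$ for the nominally larger $R(1-\theta)$.
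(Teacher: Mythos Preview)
Your proof is correct and follows essentially the same route as the paper: iterate Lemma \ref{le34} exactly $k$ times with per-step shrinkage $\theta/k$, collect the factor $\prod_{j=1}^{k}\frac{4rk}{ej\theta}=\frac{(4rk)^{k}}{k!\,(e\theta)^{k}}$, and collapse it using $k!\geq(k/e)^{k}$. The only cosmetic difference is that the paper uses the \emph{linear} intermediate radii $R(1-l\theta/k)$, which land exactly at $R(1-\theta)$ when $l=k$, whereas your \emph{geometric} radii $R(1-\theta/k)^{l}$ overshoot slightly and then require the Bernoulli/monotonicity step at the end; both bookkeepings are valid.
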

%%%
\begin{proof}
Fix $k$ and denote $\theta'=\theta/k$. For any $l\leq k$, let
$C_{l}^{(k)}=\langle|\frac{1}{l!}ad_{\chi}^{l}f|\rangle_{s,N}^{R(1-\theta'l)}$. By Lemma \ref{le34}, one has
\begin{align*}
C_{l}^{(k)}&=\frac{1}{l}\langle|\{\chi,\frac{1}{(l-1)!}ad_{\chi}^{l-1}f\}|\rangle_{s,N}^{R(1-\theta'l)}
\\&\leq\frac{1}{l}\frac{4r}{e\theta'}\langle|\frac{1}{(l-1)!}ad_{\chi}^{l-1}f|\rangle_{s,N}^{R(1-\theta'(l-1))}|\chi|_{s,N}R^{r-2}(1-\theta'(l-1))^{r-2}
\\&\leq\frac{1}{l}\frac{4r}{e\theta'}|\chi|_{s,N}R^{r-2}C_{l-1}^{(k)}
\end{align*}
Notice that $C_{0}^{(k)}=\langle|f|\rangle_{s,N}^{R}$. By induction and the inequality $k^{k}\leq k!e^{k}$, one has
\begin{align*}
C_{k}^{(k)}&\leq\frac{1}{k!}(\frac{4r}{e\theta'}|\chi|_{s,N}R^{r-2})^{k}C_{0}^{(k)}
\\&=\frac{1}{k!}(\frac{4rk}{e\theta}|\chi|_{s,N}R^{r-2})^{k}\langle|f|\rangle_{s,N}^{R}
\\&\leq(\frac{4r}{\theta}|\chi|_{s,N}R^{r-2})^{k}\langle|f|\rangle_{s,N}^{R}.
\end{align*}
Then \eqref{form310} holds.
\end{proof}
%%%%%
In order to construct the canonical transformation $\phi$ in Theorem \ref{th21}, the following lemma solves homological equations.
%%%%%
\begin{lemma}
\label{le36}
Let $f$ be homogenous polynomial of order $r\geq3$ and $H_{0}=\sum_{a\in\mb{Z}^{d}}\omega_{a}|\xi_{a}|^{2}$. If $|f|_{s,N}<+\infty$, then there exist homogenous polynomials $\chi,\ms{Z}$ of order $r$ such that
\begin{equation}
\label{form311}
\{H_{0},\chi\}+\ms{Z}=f,
\end{equation}
where  $\ms{Z}$ is in $(\gamma,\tau)$-normal form with respect to $\omega$. Moverover, $\chi$ and $\ms{Z}$ fulfill the estimates
\begin{equation}
\label{form312}
|\chi|_{s+\tau,N}\leq\frac{1}{\gamma}|f|_{s,N}\quad\text{and}\quad|\ms{Z}|_{s,N}\leq|f|_{s,N}.
\end{equation}
\end{lemma}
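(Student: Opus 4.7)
The plan is to solve \eqref{form311} coefficient by coefficient using the classical dichotomy between resonant and non-resonant indices. Writing $f$ in its symmetric form $f(z) = \sum_{\bs{j}} \tilde{f}_{\bs{j}} z_{\bs{j}}$ and setting $\Omega(\bs{j}) := \delta_{1}\omega_{a_{1}} + \cdots + \delta_{r}\omega_{a_{r}}$, a direct computation from the definition \eqref{form23} of the Poisson bracket gives $\{H_{0}, z_{\bs{j}}\} = {\rm i}\,\Omega(\bs{j})\,z_{\bs{j}}$. Thus, seeking $\chi = \sum_{\bs{j}} \tilde{\chi}_{\bs{j}} z_{\bs{j}}$ and $\ms{Z} = \sum_{\bs{j}} \tilde{\ms{Z}}_{\bs{j}} z_{\bs{j}}$ in symmetric form, equation \eqref{form311} reduces term-by-term to the scalar identity ${\rm i}\,\Omega(\bs{j})\,\tilde{\chi}_{\bs{j}} + \tilde{\ms{Z}}_{\bs{j}} = \tilde{f}_{\bs{j}}$ for each $\bs{j} \in (\mb{U}_{2}\times\mb{Z}^{d})^{r}$.

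Following the threshold built into the normal form condition \eqref{form29}, I would partition the index set into two classes. If $|\Omega(\bs{j})| < \gamma \bigl(j_{1}^{*}/(\langle\mc{M}(\bs{j})\rangle j_{2}^{*}\cdots j_{r}^{*})\bigr)^{\tau}$, define $\tilde{\ms{Z}}_{\bs{j}} := \tilde{f}_{\bs{j}}$ and $\tilde{\chi}_{\bs{j}} := 0$; otherwise define $\tilde{\ms{Z}}_{\bs{j}} := 0$ and $\tilde{\chi}_{\bs{j}} := \tilde{f}_{\bs{j}} / ({\rm i}\,\Omega(\bs{j}))$. By construction $\ms{Z}$ is in $(\gamma,\tau)$-normal form with respect to $\omega$, and the bound $|\ms{Z}|_{s,N} \leq |f|_{s,N}$ is immediate from \eqref{form27} since each coefficient of $\ms{Z}$ is either $\tilde{f}_{\bs{j}}$ or zero.

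The estimate on $\chi$ is where the precise structure of the norm \eqref{form27} comes into play. On non-resonant indices, the small-divisor bound yields $|\tilde{\chi}_{\bs{j}}| \leq \gamma^{-1} \bigl(\langle\mc{M}(\bs{j})\rangle j_{2}^{*}\cdots j_{r}^{*}/j_{1}^{*}\bigr)^{\tau} |\tilde{f}_{\bs{j}}|$. Multiplying by the weight $\bigl(j_{1}^{*}/(j_{2}^{*}\cdots j_{r}^{*})\bigr)^{s+\tau}$ used to define the $(s+\tau,N)$-norm, the $\tau$-power of the ratio cancels, leaving $\gamma^{-1} \langle\mc{M}(\bs{j})\rangle^{\tau} \bigl(j_{1}^{*}/(j_{2}^{*}\cdots j_{r}^{*})\bigr)^{s} |\tilde{f}_{\bs{j}}|$. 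Since the supremum in \eqref{form27} is taken over $\bs{j}$ with $\mc{M}(\bs{j}) = b$, the surplus factor $\langle b\rangle^{\tau}$ merges with the outer prefactor $\langle b\rangle^{N-s-\tau}$ to give exactly $\langle b\rangle^{N-s}$, so that $|\chi|_{s+\tau,N} \leq \gamma^{-1} |f|_{s,N}$.

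The only point worth flagging is this cancellation between the $\tau$-weight encoded in the divisor bound \eqref{form29} and the shift $s \mapsto s+\tau$ in the norm indices; the placement of $\langle\mc{M}(\bs{j})\rangle$ in the denominator of \eqref{form29} is exactly what makes this bookkeeping close, converting what would otherwise be an uncontrollable $\langle b\rangle^{\tau}$ loss into a clean absorption by the existing weight in \eqref{form27}. Beyond verifying this compatibility, the argument is a purely termwise algebraic identity with no analytic subtlety.
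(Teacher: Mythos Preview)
Your proof is correct and follows essentially the same route as the paper's: both solve the homological equation coefficient-by-coefficient via the resonant/non-resonant dichotomy built into \eqref{form29}, and both derive the estimates directly from the definition \eqref{form27} of the $s,N$-norm. The paper is terser, simply stating that the bounds ``immediately follow from the definition of the norm,'' whereas you spell out explicitly the cancellation between the $\langle\mc{M}(\bs{j})\rangle^{\tau}$ factor coming from the small divisor and the shift $\langle b\rangle^{N-s-\tau}\to\langle b\rangle^{N-s}$ in the outer weight---which is indeed the key mechanism.
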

%%%
\begin{proof}
Expand $f,\chi,\ms{Z}$ in Taylor series, namely,
$$f(z)=\sum_{\bs{j}\in(\mb{U}_{2}\times\mb{Z}^{d})^{r}}f_{\bs{j}}z_{\bs{j}},\quad
\chi(z)=\sum_{\bs{j}\in(\mb{U}_{2}\times\mb{Z}^{d})^{r}}\chi_{\bs{j}}z_{\bs{j}},\quad
\ms{Z}(z)=\sum_{\bs{j}\in(\mb{U}_{2}\times\mb{Z}^{d})^{r}}\ms{Z}_{\bs{j}}z_{\bs{j}},$$
and then for any $\bs{j}=(\delta_{k},a_{k})_{k=1}^{r}$, the equation \eqref{form311} is equivalent to
$$-{\rm i}(\delta_{1}\omega_{a_{1}}+\dots+\delta_{r}\omega_{a_{r}})\chi_{\bs{j}}+\ms{Z}_{\bs{j}}=f_{\bs{j}}.$$
Define
\begin{align*}
\chi_{\bs{j}}=&
\begin{cases}
\frac{f_{\bs{j}}}{-{\rm i}(\delta_{1}\omega_{a_{1}}+\dots+\delta_{r}\omega_{a_{r}})},& \text{when}\;|\delta_{1}\omega_{a_{1}}+\dots+\delta_{r}\omega_{a_{r}}|\geq\gamma(\frac{j_{1}^{*}}{\langle\mc{M}(\bs{j})\rangle j_{2}^{*}\dots j_{r}^{*}})^{\tau}, \\
0,& \text{otherwise};
\end{cases}
\\\ms{Z}_{\bs{j}}=&
\begin{cases}
f_{\bs{j}},& \qquad\text{when}\;|\delta_{1}\omega_{a_{1}}+\dots+\delta_{r}\omega_{a_{r}}|<\gamma (\frac{j_{1}^{*}}{\langle\mc{M}(\bs{j})\rangle j_{2}^{*}\dots j_{r}^{*}})^{\tau}, \\
0,& \qquad\text{otherwise}.
\end{cases}
\end{align*}
By construction, $\ms{Z}$ is in $(\gamma,\tau)$-normal form with respect to $\omega$ and the estimates \eqref{form312} immediately follow from the definition of the norm.
\end{proof}
%%%%%
In order to state the iterative lemma, we need some notations. Fix {\color{blue} an} integer $r\geq1$ and for any positive integer $l\leq r$, write $R_{*l}=R_{*}(2-\frac{l}{r})$; similarly, for any $R\leq R_{*}$, write $R_{l}=R(2-\frac{l}{r})$. We proceed by induction from $l-1$ to $l$. Initially, $H^{(0)}=H_{0}+\ms{Z}^{(0)}+\ms{R}^{(0)}$ with $\ms{Z}^{(0)}=0$ and $\ms{R}^{(0)}=P$.
%%%%%
\begin{lemma}[Iterative Lemma]
\label{le37}
Consider the Hamiltonian function \eqref{form210} in Theorem \ref{th21}. There exists a homogenous polynomial Hamiltonian $\chi_{l}$ of order $l+2$
such that for any  $s\geq s_{0}+l\tau+\frac{d+1}{2}$ and $N\geq s+\frac{d+1}{2}$, the Lie transformation $\Phi_{\chi_{l}}^{1}:B_{s}(\frac{R_{l}}{2\rho})\mapsto B_{s}(\frac{R_{l-1}}{2\rho})$ puts $H$ in  normal form:
\begin{equation}
\label{form313}
H^{(l)}:=H^{(l-1)}\circ\Phi_{\chi_{l}}^{1}=H_{0}+\ms{Z}^{(l)}+\ms{R}^{(l)}{\color{blue} ,}
\end{equation}
where
\begin{enumerate}[(i)]
\item the transformation satisfies the estimate:
\begin{equation}
\label{form314}
 \sup_{\|z\|_{s}\leq R_{l}/(2\rho)}\|z-\Phi^{1}_{\chi_{l}}(z)\|_{s}\leq\frac{C_{N}}{2^{l+1}\gamma}R_{l-1}^{2};
\end{equation}
  \item $\ms{Z}^{(l)}$ is a polynomial of order at most $l+2$ which is in $(\gamma,\tau)$-normal form with respect to $\omega$ and
  \begin{equation}
  \label{form315}
  \langle|\ms{Z}^{(l)}|\rangle_{s_{0}+l\tau,N}^{R_{l}}\leq lC_{N}R_{l};
  \end{equation}
  \item the remainder term $\ms{R}^{(l)}$ is of order at least $l+3$ and
  \begin{equation}
  \label{form316}
  \langle|\ms{R}^{(l)}|\rangle_{s_{0}+l\tau,N}^{R_{l}}\leq C_{N}R_{l}(\frac{R_{l}}{R_{*l}})^{l}.
  \end{equation}
\end{enumerate}
\end{lemma}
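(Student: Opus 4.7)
My plan is to argue by induction on $l$. The base case $l=0$ is trivial: set $\ms{Z}^{(0)}=0$, $\ms{R}^{(0)}=P$, and note that $R_{0}=2R\le R'$ (because $R\le R_{*}\le R'/2$), so the required \eqref{form316} at $l=0$ reduces directly to the standing hypothesis \eqref{form211}, while \eqref{form314}--\eqref{form315} are vacuous.

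For the inductive step $l-1\to l$, first extract the leading homogeneous part of the current remainder: write $\ms{R}^{(l-1)}=f_{l}+\tilde{\ms{R}}^{(l-1)}$, where $f_{l}$ is homogeneous of order $l+2$ (the lowest present, by \eqref{form316} at step $l-1$) and $\tilde{\ms{R}}^{(l-1)}$ is of order at least $l+3$. Apply Lemma \ref{le36} at polynomial order $l+2$ with index $s_{0}+(l-1)\tau$ to build $\chi_{l},\ms{Z}_{l}$ of order $l+2$ solving the homological equation $\{H_{0},\chi_{l}\}+\ms{Z}_{l}=f_{l}$, with $|\chi_{l}|_{s_{0}+l\tau,N}\le\gamma^{-1}|f_{l}|_{s_{0}+(l-1)\tau,N}$ and $|\ms{Z}_{l}|_{s_{0}+l\tau,N}\le|f_{l}|_{s_{0}+(l-1)\tau,N}$, and with $\ms{Z}_{l}$ in $(\gamma,\tau)$-normal form. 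Extracting the $l$-th Taylor coefficient of the inductive bound yields $|f_{l}|_{s_{0}+(l-1)\tau,N}\,R_{l-1}^{l}\le C_{N}R_{l-1}(R_{l-1}/R_{*,l-1})^{l-1}$, hence a quantitative size for $\chi_{l}$. The $\tau$-loss in regularity at each step is exactly what forces the hypothesis $s\ge s_{0}+r\tau+(d+1)/2$.

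Next, expand $H^{(l)}:=H^{(l-1)}\circ\Phi^{1}_{\chi_{l}}$ using the Lie series \eqref{form37}. Rewriting $\{\chi_{l},H_{0}\}=\ms{Z}_{l}-f_{l}$ through the homological identity, the $k=0,1$ contributions telescope to give
\begin{equation*}
H^{(l)}=H_{0}+(\ms{Z}^{(l-1)}+\ms{Z}_{l})+\tilde{\ms{R}}^{(l-1)}+\sum_{k\ge1}\frac{ad_{\chi_{l}}^{k}(\ms{Z}_{l}-f_{l})}{(k+1)!}+\sum_{k\ge1}\frac{ad_{\chi_{l}}^{k}(\ms{Z}^{(l-1)}+\ms{R}^{(l-1)})}{k!}.
\end{equation*}
Define $\ms{Z}^{(l)}:=\ms{Z}^{(l-1)}+\ms{Z}_{l}$, a polynomial of order at most $l+2$ in $(\gamma,\tau)$-normal form, for which \eqref{form315} follows from the triangle inequality combined with the above bound on $|\ms{Z}_{l}|$. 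Collect all remaining terms into $\ms{R}^{(l)}$; since $\chi_{l}$ has order $l+2\ge3$ and each Poisson bracket with it raises polynomial degree by $l\ge1$, every contribution to $\ms{R}^{(l)}$ is of order at least $l+3$.

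For the quantitative closure I would invoke Lemma \ref{le35} with shrinkage parameter $\theta=1-R_{l}/R_{l-1}\simeq 1/r$, which bounds both Lie-series tails in the $\langle|\cdot|\rangle^{R_{l}}_{s_{0}+l\tau,N}$-norm by a geometric series in the key quantity $\epsilon_{l}:=4(l+2)\theta^{-1}|\chi_{l}|_{s_{0}+l\tau,N}R_{l-1}^{l}$. The tight threshold $R_{*}\le\gamma/(10\rho r^{4}C_{N})$ in \eqref{form213} is calibrated so that $\epsilon_{l}$ is a small multiple of $R_{l}/R_{*l}=R/R_{*}$ at every step, providing exactly the extra factor needed to promote \eqref{form316} from step $l-1$ to step $l$ without degrading the constant $C_{N}$. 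The transformation estimate \eqref{form314} then follows by writing $\Phi^{1}_{\chi_{l}}(z)-z=\int_{0}^{1}X_{\chi_{l}}(\Phi^{t}_{\chi_{l}}(z))\,dt$, applying Lemma \ref{le31} to $\chi_{l}$, and inserting the bound on $|\chi_{l}|$; the factor $2^{-(l+1)}$ emerges from the geometric smallness $R/R_{*}\le 1/2$. The main obstacle is precisely this bookkeeping --- showing that no constant inflates by $2^{l}$ or $l!$ over $r$ iterations --- which relies on the fact that $R_{l}/R_{*l}$ is the \emph{constant} ratio $R/R_{*}$ across all $l$ and that $R_{*}$ is dimensioned against the full iteration count $r^{4}$ rather than being chosen adaptively.
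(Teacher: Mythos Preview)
Your proposal is correct and follows essentially the same route as the paper: induction on $l$, construction of $\chi_{l},\ms{Z}_{l}$ from the lowest homogeneous piece of $\ms{R}^{(l-1)}$ via Lemma~\ref{le36}, Lie-series expansion \eqref{form37} with the rewriting $\{\chi_{l},H_{0}\}=\ms{Z}_{l}-f_{l}$, and closure of the tails by Lemma~\ref{le35} with $\theta=R/(rR_{l-1})$. One small correction: the factor $2^{-(l+1)}$ in \eqref{form314} does not come from ``$R/R_{*}\le 1/2$'' (only $R\le R_{*}$ is assumed) but from the ball radius $R_{l-1}/(2\rho)$ in Lemma~\ref{le31}, which gives $\|X_{\chi_{l}}\|_{s}\le|\chi_{l}|_{s_{0}+l\tau,N}(R_{l-1}/2)^{l+1}$; combined with $|\chi_{l}|_{s_{0}+l\tau,N}\le C_{N}/(\gamma R_{*l-1}^{l-1})$ and $R_{l-1}/R_{*l-1}\le 1$, this yields the stated bound directly.
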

%%%
\begin{proof}
Let $\ms{R}^{(l-1)}=\sum_{i\geq l}\ms{R}^{(l-1)}_{i}$ with $\ms{R}^{(l-1)}_{i}$ a homogenous polynomial of order $i+2$. By Lemma \ref{le36}, there exist homogenous polynomials $\chi_{l}$ and $\ms{Z}_{l}$ of order $l+2$, where $\ms{Z}_{l}$ is in $(\gamma,\tau)$-normal form with respect to $\omega$, such that
$$\{H_{0},\chi_{l}\}+\ms{Z}_{l}=\ms{R}^{(l-1)}_{l}$$
 with the following estimates
\begin{align}
\label{form317}
&|\chi_{l}|_{s_{0}+l\tau,N}\leq\frac{1}{\gamma}|\ms{R}^{(l-1)}_{l}|_{s_{0}+(l-1)\tau,N},
\\\label{form318}&|\ms{Z}_{l}|_{s_{0}+(l-1)\tau,N}\leq|\ms{R}^{(l-1)}_{l}|_{s_{0}+(l-1)\tau,N}.
\end{align}
By the formula \eqref{form37}, one has
\begin{align*}
H^{(l-1)}\circ\Phi_{\chi_{l}}^{1}=&H_{0}+\{\chi_{l},H_{0}\}+\ms{Z}^{(l-1)}+\ms{R}^{(l-1)}
\\&+\sum_{i=2}^{+\infty}\frac{1}{i!}ad_{\chi_{l}}^{i}H_{0}+\sum_{i=1}^{+\infty}\frac{1}{i!}ad_{\chi_{l}}^{i}\ms{Z}^{(l-1)}+\sum_{i=1}^{+\infty}\frac{1}{i!}ad_{\chi_{l}}^{i}\ms{R}^{(l-1)}
\\:=&H_{0}+\ms{Z}^{(l)}+\ms{R}^{(l)},
\end{align*}
where
\begin{align}
\label{form319}&\ms{Z}^{(l)}=\ms{Z}^{(l-1)}+\ms{Z}_{l},
\\\label{form320}&\ms{R}^{(l)}=\ms{R}^{(l-1)}-\ms{R}^{(l-1)}_{l}+\sum_{i=2}^{+\infty}\frac{1}{i!}ad_{\chi_{l}}^{i}H_{0}+\sum_{i=1}^{+\infty}\frac{1}{i!}ad_{\chi_{l}}^{i}\ms{Z}^{(l-1)}+\sum_{i=1}^{+\infty}\frac{1}{i!}ad_{\chi_{l}}^{i}\ms{R}^{(l-1)}.
\end{align}
\indent  By \eqref{form316} for $l-1$, one has
\begin{equation}
\label{form321}
|\ms{R}^{(l-1)}_{l}|_{s_{0}+(l-1)\tau,N}\leq\frac{C_{N}}{R_{*l-1}^{l-1}}.
\end{equation}
By \eqref{form317} and \eqref{form321}, one has
\begin{equation}
\label{form322}
|\chi_{l}|_{s_{0}+l\tau,N}\leq\frac{C_{N}}{\gamma R^{l-1}_{*l-1}}.
\end{equation}
By Lemma \ref{le31} and \eqref{form322}, one has
$$\sup_{\|z\|_{s}\leq R_{l-1}/(2\rho)}\|X_{\chi_{l}}(z)\|_{s}\leq|\chi_{l}|_{s_{0}+l\tau,N}(\frac{R_{l-1}}{2})^{l+1}
\leq\frac{C_{N}}{2^{l+1}\gamma}(\frac{R_{l-1}}{R_{*l-1}})^{l-1}R_{l-1}^{2}
\leq\frac{C_{N}}{2^{l+1}\gamma}R_{l-1}^{2}.$$
Notice that $R_{l-1}\leq2R$  by the definition of $R_{l-1}$ and $R\leq R_{*}\leq\frac{\gamma}{2\rho rC_{N}}$ by \eqref{form213}. Then one has
$$\sup_{\|z\|_{s}\leq R_{l-1}/(2\rho)}\|X_{\chi_{l}}(z)\|_{s}\leq\frac{C_{N}}{2^{l+1}\gamma}(2R)^{2}
\leq\frac{C_{N}}{2^{l+1}\gamma}4R_{*}R\leq\frac{R}{2\rho r}
=\frac{R_{l-1}-R_{l}}{2\rho}.$$
So the transformation $\Phi_{\chi_{l}}^{1}$ is well defined in $B_{s}(\frac{R_{l}}{2\rho})$ with the estimate
$$\sup_{\|z\|_{s}\leq R_{l}/(2\rho)}\|z-\Phi_{\chi_{l}}^{1}(z)\|_{s}\leq\sup_{\|z\|_{s}\leq R_{l-1}/(2\rho)}\|X_{\chi_{l}}(z)\|_{s}\leq\frac{C_{N}}{2^{l+1}\gamma}R_{l-1}^{2}.$$
\indent Next, we will estimate $\langle|\ms{Z}^{(l)}|\rangle_{s_{0}+l\tau,N}^{R_{l}}$. By \eqref{form318} and \eqref{form321}, one has
\begin{equation}
\label{form323}
|\ms{Z}_{l}|_{s_{0}+l\tau,N}R_{l}^{l}\leq|\ms{Z}_{l}|_{s_{0}+(l-1)\tau,N}R_{l}^{l}\leq C_{N}R_{l}(\frac{R_{l}}{R_{*l-1}})^{l-1}
\leq C_{N}R_{l}.
\end{equation}
By \eqref{form315} for $l-1$, one has
\begin{equation}
\label{form324}
\langle|\ms{Z}^{(l-1)}|\rangle_{s_{0}+l\tau,N}^{R_{l}}\leq\langle|\ms{Z}^{(l-1)}|\rangle_{s_{0}+(l-1)\tau,N}^{R_{l-1}}\frac{R_{l}}{R_{l-1}}\leq(l-1)C_{N}R_{l}.
\end{equation}
In view of \eqref{form319}, by \eqref{form323} and \eqref{form324}, one has
\begin{align*}
\langle|\ms{Z}^{(l)}|\rangle_{s_{0}+l\tau,N}^{R_{l}}=\langle|\ms{Z}^{(l-1)}|\rangle_{s_{0}+l\tau,N}^{R_{l}}+|\ms{Z}_{l}|_{s_{0}+l\tau,N}R_{l}^{l}
\leq lC_{N}R_{l}.
\end{align*}
\indent Finally, we estimate $\langle|\ms{R}^{(l)}|\rangle_{s_{0}+l\tau,N}^{R_{l}}$ by dividing $\ms{R}^{(l)}$ in \eqref{form320} into four parts .
\\\indent1) Taking $R_{l-1}=R_{*l-1}$ in \eqref{form316} for $l-1$, one has
\begin{align*}
\langle|\ms{R}^{(l-1)}-\ms{R}^{(l-1)}_{l}|\rangle_{s_{0}+(l-1)\tau,N}^{R_{*l-1}}
\leq\langle|\ms{R}^{(l-1)}|\rangle_{s_{0}+(l-1)\tau,N}^{R_{*l-1}}\leq C_{N}R_{*l-1},
\end{align*}
and hence
\begin{align}
\label{form325}
\langle|\ms{R}^{(l-1)}-\ms{R}^{(l-1)}_{l}|\rangle_{s_{0}+l\tau,N}^{R_{l}}
&\leq\langle|\ms{R}^{(l-1)}-\ms{R}^{(l-1)}_{l}|\rangle_{s_{0}+(l-1)\tau,N}^{R_{*l-1}}(\frac{R_{l}}{R_{*l-1}})^{l+1}
\\\notag&\leq C_{N}R_{*l-1}(\frac{R_{l}}{R_{*l-1}})^{l+1}
\\\notag&= (1-\frac{1}{2r-l+1})^{l}C_{N}R_{l}(\frac{R_{l}}{R_{*l}})^{l}
\\\notag&\leq (1-\frac{1}{2r})C_{N}R_{l}(\frac{R_{l}}{R_{*l}})^{l}.
\end{align}
\\\indent2) Notice that $ad_{\chi_{l}}^{1}H_0=\ms{Z}_{l}-\ms{R}^{(l-1)}_{l}$ and then
\begin{equation}
\label{form326}
\sum_{i=2}^{+\infty}\frac{1}{i!}ad_{\chi_{l}}^{i}H_{0}=\sum_{i=1}^{+\infty}\frac{1}{(i+1)!}ad_{\chi_{l}}^{i}(\ms{Z}_{l}-\ms{R}^{(l-1)}_{l}).
\end{equation}
By Lemma \ref{le35} and $R_{l}=R_{l-1}(1-\frac{R}{rR_{l-1}})$, one has
\begin{align}
\label{form327}
&\langle|\frac{1}{(i+1)!}ad_{\chi_{l}}^{i}(\ms{Z}_{l}-\ms{R}^{(l-1)}_{l})|\rangle_{s_{0}+l\tau,N}^{R_{l}}
\\\notag\leq&\frac{1}{i+1}\Big(\frac{4(l+2)}{R/(rR_{l-1})}|\chi_{l}|_{s_{0}+l\tau,N}R_{l-1}^{l}\Big)^{i}\langle|\ms{Z}_{l}-\ms{R}_{l}^{(l-1)}|\rangle_{s_{0}+l\tau,N}^{R_{l-1}}.
\end{align}
By \eqref{form322}, one has
\begin{align}
\label{form328}
\frac{4(l+2)}{R/(rR_{l-1})}|\chi_{l}|_{s_{0}+l\tau,N}R_{l-1}^{l}
&\leq8r(l+2)\frac{C_{N}}{\gamma R^{l-1}_{*l-1}}R_{l-1}^{l}
\\\notag&=\frac{8r(l+2)C_{N}R_{*l-1}}{\gamma}(\frac{R_{l-1}}{R_{*l-1}})^{l}
\\\notag&\leq\frac{1}{20r^{2}}(\frac{R_{l}}{R_{*l}})^{l},
\end{align}
where the last inequality follows from $R_{*l-1}\leq2R_{*}\leq\frac{2\gamma}{960r^{4}C_{N}}$ and $\frac{R_{l-1}}{2R_{*l-1}}=\frac{R_{l}}{2R_{*l}}$.
By the construction of $\ms{Z}_{l}$ and the estimate \eqref{form321}, one has
\begin{align}
\label{form329}
\langle|\ms{Z}_{l}-\ms{R}_{l}^{(l-1)}|\rangle_{s_{0}+l\tau,N}^{R_{l-1}}&\leq\langle|\ms{R}_{l}^{(l-1)}|\rangle_{s_{0}+l\tau,N}^{R_{l-1}}
\leq|\ms{R}^{(l-1)}_{l}|_{s_{0}+(l-1)\tau,N}R_{l-1}^{l}
\\\notag&\leq\frac{C_{N}}{R_{*l-1}^{l-1}}R_{l-1}^{l}
\leq C_{N}R_{l-1}.
\end{align}
Hence, by \eqref{form326}--\eqref{form329} one has
\begin{align}
\label{form330}
\langle|\sum_{i=2}^{+\infty}\frac{1}{i!}ad_{\chi_{l}}^{i}H_{0}|\rangle_{s_{0}+l\tau,N}^{R_{l}}
&\leq\sum_{i=1}^{+\infty}\langle|\frac{1}{(i+1)!}ad_{\chi_{l}}^{i}(\ms{Z}_{l}-\ms{R}^{(l-1)}_{l})|\rangle_{s_{0}+l\tau,N}^{R_{l}}
\\\notag&\leq C_{N}R_{l-1}\sum_{i=1}^{+\infty}\frac{1}{i+1}(\frac{1}{20r^{2}})^{i}(\frac{R_{l}}{R_{*l}})^{li}
\\\notag&\leq C_{N}R_{l-1}\frac{1}{20r^{2}}(\frac{R_{l}}{R_{*l}})^{l}
\\\notag&\leq\frac{1}{10r^{2}}C_{N}R_{l}(\frac{R_{l}}{R_{*l}})^{l}.
\end{align}
\\\indent3) Similarly, by Lemma \ref{le35}, \eqref{form315} for $l-1$ and  \eqref{form328}, one has
\begin{align}
\label{form331}
\langle|\sum_{i=1}^{+\infty}\frac{1}{i!}ad_{\chi_{l}}^{i}\ms{Z}^{(l-1)}|\rangle_{s_{0}+l\tau,N}^{R_{l}}
&\leq\langle|\ms{Z}^{(l-1)}|\rangle_{s_{0}+l\tau,N}^{R_{l-1}}
\sum_{i=1}^{+\infty}(\frac{4(l+2)}{R/(rR_{l-1})}|\chi_{l}|_{s_{0}+l\tau,N}R_{l-1}^{l})^{i}
\\\notag&\leq(l-1)C_{N}R_{l-1}\sum_{i=1}^{+\infty}(\frac{1}{20r^{2}})^{i}(\frac{R_{l}}{R_{*l}})^{li}
\\\notag&\leq(l-1)C_{N}R_{l-1}\frac{1}{10r^{2}}(\frac{R_{l}}{R_{*l}})^{l}
\\\notag&\leq \frac{1}{5r}C_{N}R_{l}(\frac{R_{l}}{R_{*l}})^{l}.
\end{align}
\\\indent4) By Lemma \ref{le35}, \eqref{form316} for $l-1$ and \eqref{form328}, one has
\begin{align}
\label{form332}
\langle|\sum_{i=1}^{+\infty}\frac{1}{i!}ad_{\chi_{l}}^{i}\ms{R}^{(l-1)}|\rangle_{s_{0}+l\tau,N}^{R_{l}}
&\leq\langle|\ms{R}^{(l-1)}|\rangle_{s_{0}+l\tau,N}^{R_{l-1}}
\sum_{i=1}^{+\infty}(\frac{4(l+2)}{R/(rR_{l-1})}|\chi_{l}|_{s_{0}+l\tau,N}R_{l-1}^{l})^{i}
\\\notag&\leq C_{N}R_{l-1}(\frac{R_{l-1}}{R_{*l-1}})^{l-1}\sum_{i=1}^{+\infty}(\frac{1}{20r^{2}})^{i}(\frac{R_{l}}{R_{*l}})^{li}
\\\notag&\leq C_{N}R_{l-1}\frac{1}{10r^{2}}(\frac{R_{l}}{R_{*l}})^{l}
\\\notag&\leq\frac{1}{5r^{2}}C_{N}R_{l}(\frac{R_{l}}{R_{*l}})^{l}.
\end{align}
Notice that for any positive $l\leq r$, $1-\frac{1}{2r}+\frac{1}{10r^{2}}+\frac{1}{5r}+\frac{1}{5r^{2}}\leq1$ and by \eqref{form325}, \eqref{form330}--\eqref{form332}, the estimate \eqref{form316} holds.
\end{proof}
%%%%%
\begin{proof}[Proof of Theorem {\sl\ref{th21}}]
After $r$ iterations, let $\phi=\Phi_{\chi_{1}}^{1}\circ\cdots\circ\Phi_{\chi_{r}}^{1}, \ms{Z}=\ms{Z}^{(r)},\ms{R}=\ms{R}^{(r)}$.
By \eqref{form314}, for any $s\geq s_{0}+r\tau+\frac{d+1}{2}$ and $R\leq R_{*}$,  $\phi:B_{s}(\frac{R}{2\rho})\longmapsto B_{s}(\frac{R}{\rho})$ satisfies the estimate:
\begin{align*}
\sup_{\|z\|_{s}\leq R/(2\rho)}\|z-\phi(z)\|_{s}
&\leq\sum_{l=1}^{r} \sup_{\|z\|_{s}\leq R_{l}/(2\rho)}\|z-\Phi^{1}_{\chi_{l}}(z)\|_{s}
\\&\leq\sum_{l=1}^{r}\frac{C_{N}}{2^{l+1}\gamma}R_{l-1}^{2}
\\&\leq\frac{2C_{N}}{\gamma}R^{2}.
\end{align*}
Hence, we can draw the conclusion.
%\begin{equation}
%R_{*}\leq\min\{R',(\frac{\gamma}{4rC_{0}})^{2}\}.
%\end{equation}
%\begin{equation}
%R_{*}\leq(\frac{R^{'r}}{C_{0}})^{2},
%\end{equation}
%and
%If assumptions \eqref{form333} and \eqref{form334} are satisfied, then for any $R\leq R_{*}$, we still have similar results to Lemma \ref{le37}.
\end{proof}
%%%%%
At the end of this section, we will prove long time stability by Theorem \ref{th21}.
\begin{proof}[Proof of Corollary {\sl\ref{co21}}]
Applying Theorem \ref{th21}, for any $\varepsilon\leq\varepsilon_{*}$, there exists a canonical transformation $z=\phi(z')$ with the following estimates:
\begin{equation}
\label{form333}
\sup_{\|z'\|_{s}\leq 5\varepsilon/3}\|z'-\phi(z')\|_{s},
\sup_{\|z\|_{s}\leq 5\varepsilon/3}\|z-\phi^{-1}(z)\|_{s}\leq \frac{2C_{N}}{\gamma}(\frac{10\rho\varepsilon}{3})^{2}.
\end{equation}
So one has
\begin{equation}
\label{form334}
\|z'(0)\|_{s}\leq\|z(0)\|_{s}+\|z(0)-\phi^{-1}(z(0))\|_{s}\leq\varepsilon+\frac{2C_{N}}{\gamma}(\frac{10\rho\varepsilon}{3})^{2}\leq\frac{4}{3}\varepsilon.
\end{equation}
Define $F(z)=\|z\|_{s}^{2}$ and denote by $T$ the escape time of $z'$ from $B_{s}(5\varepsilon/3)$. Then for any $|t|\leq T$,
\begin{align}
\label{form335}|F(z'(t))-F(z'(0))|&=\left|\int_{0}^{t}\{H\circ\phi,F\}(z'(t))dt\right|
\\\notag&=\left|\int_{0}^{t}\{\ms{R},F\}(z'(t))dt\right|
\\\notag&\leq T\sup_{|t|\leq T}\left|\{\ms{R},F\}(z'(t))\right|
\\\notag&\leq T\sup_{|t|\leq T}\|X_{\ms{R}}(z'(t))\|_{s}\|z'(t)\|_{s}.
\end{align}
By Lemma \ref{le32}, \eqref{form217} in Theorem \ref{th21} and the definition of $\varepsilon_{*}$ in \eqref{form219}, one has
\begin{align}
\label{form336}
\sup_{\|z'\|_{s}\leq5\varepsilon/3}\|X_{\ms{R}}(z')\|_{s}
\leq \frac{5\rho}{3}\varepsilon\langle|\ms{R}|\rangle_{s_{0}+r\tau}^{5\rho\varepsilon/3}\leq\frac{C_{N}}{R_{*}^{r}}(\frac{5\rho}{3}\varepsilon)^{r+2}=\frac{25\rho^{2}C_{N}}{9(4\varepsilon_{*})^{r}}\varepsilon^{r+2}.
\end{align}
Notice that for any $|t|\leq T$, $ \|z'(t)\|_{s}\leq\frac{5}{3}\varepsilon$, and by \eqref{form335} and \eqref{form336}, one has
\begin{align}
\label{form337}
|F(z'(t))-F(z'(0))|&\leq T\sup_{\|z'(t)\|_{s}\leq 5\varepsilon/3}\|X_{\ms{R}}(z'(t))\|_{s}\|z'(t)\|_{s}
\\\notag&\leq T\frac{125\rho^{2}C_{N}}{27(4\varepsilon_{*})^{r}}\varepsilon^{r+3}
\\\notag&<\frac{T}{c_{1}}\varepsilon^{r+3}
\end{align}
with the constant $c_{1}$ defined in \eqref{form220}. If $T<c_{1}\varepsilon^{-(r+1)}$, by \eqref{form334} and \eqref{form337}, one has
\begin{align*}
(\frac{5}{3}\varepsilon)^{2}=\|z'(T)\|_{s}^{2}=F(z'(T))\leq F(z'(0))+|F(z'(T))-F(z'(0))|
<(\frac{4}{3}\varepsilon)^{2}+\varepsilon^{2},
\end{align*}
which is impossible. Hence, $T\geq c_{1}\varepsilon^{-(r+1)}$. Then for any $t\leq c_{1}\varepsilon^{-(r+1)}$, by \eqref{form333}, one has
$$\|z(t)\|_{s}\leq\|z'(t)\|_{s}+\|z'(t)-\phi(z'(t))\|_{s}
\leq\frac{5}{3}\varepsilon+\frac{2C_{N}}{\gamma}(\frac{10\rho}{3}\varepsilon)^{2}\leq2\varepsilon.$$
\indent In addition, for any $a\in\mb{Z}^{d}$, one has
\begin{equation}
\label{form338}
|J_{a}(t)-J_{a}(0)|\leq|J_{a}(t)-J'_{a}(t)|+|J'_{a}(t)-J'_{a}(0)|+|J'_{a}(0)-J_{a}(0)|.
\end{equation}
Similarly to \eqref{form335}--\eqref{form337} but $\langle a\rangle^{2s}J'_{a}$ instead of $F(z')$, for any $|t|\leq c_{1}\varepsilon^{-(r+\nu)}$, one has
\begin{equation}
\label{form339}
\langle a\rangle^{2s}|J'_{a}(t)-J'_{a}(0)|\leq\varepsilon^{3-\nu}.
\end{equation}
By \eqref{form333}, one has
\begin{align}
\label{form340}
\langle a\rangle^{2s}|J_{a}(t)-J'_{a}(t)|\leq\big(\|z(t)\|_{s}+\|z'(t)\|_{s}\big)\|z(t)-z'(t)\|_{s}
\leq\frac{100C_{N}\rho^{2}}{\gamma}\varepsilon^{3}.
\end{align}
So \eqref{form223} follows from \eqref{form338}--\eqref{form340}.
\end{proof}

%=========================================================
\section{Applications}
\label{sec4}
In this section, as two examples, we prove long time stability of nonlinear wave equation in one dimension and nonlinear Schr\"{o}dinger equation in high dimension by applying \Cref{th21} and \Cref{co21}.

%=================================
\subsection{Nonlinear wave equation in one dimension}
\label{sec41}
As an application, we consider the following nonlinear wave equation
\begin{equation}
\label{form41}
u_{tt}-u_{xx}+mu+f(x,u)=0,\quad x\in\mb{T},
\end{equation}
where $m\in[1,2]$, the function $f$ is smooth with respect to $x$ and analytic of order at least two with respect to $u$ at the origin, i.e. $f(x,0)=\pa_{u}f(x,u)|_{u=0}=0$.
Write the operator $\Lambda:=(-\partial_{xx}+m)^{1/2}$ and let
\begin{equation}
\label{form42}
z=\frac{1}{\sqrt{2}}(\Lambda^{\frac{1}{2}}u+{\rm i}\Lambda^{-\frac{1}{2}}u_t).
\end{equation}
Then the equation \eqref{form41} is equivalent to
\begin{equation}
\label{form43}
\dot{z}=-{\rm i}\Lambda z-\frac{{\rm i}}{\sqrt{2}}\Lambda^{-\frac{1}{2}}
f\big(x,\Lambda^{-\frac{1}{2}}(\frac{z+\bar{z}}{\sqrt{2}})\big).\\
\end{equation}
Using the Fourier expansion
$z(t,x)=\sum_{a\in\mb{Z}}z_{a}(t)e^{{\rm i}ax}$, rewrite \eqref{form43} as
\begin{equation}
\label{form44}
\dot{z}_{a}=-{\rm i}\frac{\pa H}{\pa \bar{z}_{a}}
\end{equation}
with the Hamiltonian function
\begin{equation}
\label{form45}
H(z,\bar{z})=H_{0}(z,\bar{z})+P(z,\bar{z})=\sum_{a\in\mb{Z}}\omega_{a}|z_{a}|^{2}+\frac{1}{2\pi}\int_{\mb{T}}F\big(x,\sum_{a\in\mb{Z}}(\frac{z_{a}e^{{\rm i}ax}+\bar{z}_{a}e^{-{\rm i}ax}}{\sqrt{2\omega_{a}}})\big)dx,
\end{equation}
where
\begin{equation}
\label{form46}
\omega_{a}=\sqrt{a^{2}+m}
\end{equation}
and $F$ is the primitive function of $f$ with respect to the variable $u$, i.e., $f=\pa_{u}F$.
In view of \eqref{form42}, the norm $\|z\|_{s-\frac{1}{2}}$ is equivalent to the Sobolev norm $\|(u,u_t)\|_{H^{s}(\mb{T})\times H^{s-1}(\mb{T})}$.

%%%%%
For the small initial datum $(u(0,x),u_t(0,x))\in  H^{s}(\mb{T})\times H^{s-1}(\mb{T})$ with large $s$, we have the following result of long time stability.
%%%%%
\begin{theorem}
\label{th41}
Fix a positive integer $r$ and a real number $\tau\geq63(r+2)^{3}$. For any small enough $\gamma>0$, there exists a subset $\Theta\subset[1,2]$ with $|\Theta|\leq e^{12(r+2)}\gamma^{\frac{1}{7(r+2)^{2}}}<1$ such that for any $m\in[1,2]\backslash\Theta$ and $s\geq r\tau+1$, if the initial datum $(u(0,x),\dot{u}(0,x))\in H^{s}(\mb{T})\times H^{s-1}(\mb{T})$ satisfies
$$\varepsilon:=\|(u(0,x),\dot{u}(0,x))\|_{H^{s}(\mb{T})\times H^{s-1}(\mb{T})}\leq\varepsilon_{*},$$
then for $|t|\leq c_{1}\varepsilon^{-r}$, the solution $u(t,x)$ of the nonlinear wave equation \eqref{form41} satisfies
\begin{equation}
\label{form47}
\|u(t,x)\|_{H^{s}(\mb{T})}\leq 2\varepsilon,
\end{equation}
\begin{equation}
\label{form48}\sup_{a\in\mb{Z}}\langle a\rangle^{2s}|J_{a}(t)-J_{a}(0)|\leq c_2\varepsilon^3,
\end{equation}
where $\varepsilon_{*}=\frac{\gamma}{75\times2^{13}r^{4}C_{s+1}}$, $c_{1}=\frac{4^{r-5}\varepsilon_{*}^{r}}{45C_{s+1}}$ and $c_{2}=1+\frac{225\times2^{13}C_{s+1}}{\gamma}$ with the constant $C_{s+1}>0$.
%depend on $r,s,\gamma$.
\end{theorem}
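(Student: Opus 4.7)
The plan is to realize Theorem \ref{th41} as a direct application of Corollary \ref{co21} in dimension $d=1$ to the Hamiltonian \eqref{form45}. Setting $s_0 = 1/2$ and translating via the isomorphism $\|u\|_{H^s(\mb{T})} \simeq \|z\|_{s-1/2}$ induced by \eqref{form42}, one needs to verify two things: first, that the perturbation $P$ in \eqref{form45} satisfies the smoothness hypothesis \eqref{form211} with suitable constants $C_N$ and some radius $R' > 0$ uniformly in $m \in [1,2]$; second, that for all $m$ outside a set $\Theta \subset [1,2]$ of the prescribed measure, the frequency family $\omega_a(m) = \sqrt{a^2+m}$ satisfies the non-resonance condition \eqref{form218} up to order $r+2$ with the exponent $\tau$ fixed in the statement. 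Once both items are in place, \eqref{form222} and \eqref{form223} directly yield \eqref{form47} and \eqref{form48}, while the explicit constants $\varepsilon_*$, $c_1$, $c_2$ in the statement match those of \eqref{form219}--\eqref{form220} after specialising $\rho = 2^{d+4}3^d = 96$ (note $\rho^2 = 9216$ absorbs the factor $2^{13}$ appearing in $\varepsilon_*$).

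For the first item, I would Taylor-expand $F(x,u) = \sum_{i\geq 1} F^{(i)}(x)\,u^{i+2}/(i+2)!$, which converges in a neighbourhood of $u=0$ uniformly in $x$ by analyticity of $f$ in $u$. Substituting $u = \sum_a (z_a e^{{\rm i}ax} + \bar{z}_a e^{-{\rm i}ax})/\sqrt{2\omega_a}$ and integrating in $x$ produces homogeneous polynomials $P_i$ of order $i+2$ whose coefficients are proportional to $\hat{F}^{(i)}(\mc{M}(\bs{j})) \prod_k (2\omega_{a_k})^{-1/2}$. Smoothness of $f$ in $x$ gives $|\hat{F}^{(i)}(b)| \leq C_N^{(i)} \langle b\rangle^{-N}$ for every $N$, which is exactly the $\langle b\rangle^{N-s_0}$ weight in \eqref{form27}; the factor $(2\omega_{a_k})^{-1/2} \leq 1$ is harmless; and the weight $(j_1^*/(j_2^*\cdots j_{i+2}^*))^{s_0}$ is bounded by $\langle \mc{M}(\bs{j})\rangle^{s_0}$ via the remark after \eqref{form27}, which can be absorbed by slightly increasing $N$. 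Analyticity of $f$ in $u$ gives geometric decay of the Taylor coefficients in $i$, producing $\langle |P|\rangle_{s_0,N}^R \leq C_N R$ for all $R \leq R'$, uniformly in $m \in [1,2]$.

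The heart of the argument, and the main obstacle, is the measure estimate for the non-resonance condition. Fix $l \leq r+2$, a sign vector $\bs{\delta} \in \mb{U}_2^l$ and an integer vector $\bs{a} \in \mb{Z}^l$ with $\bs{j} = (\delta_k,a_k)_k \notin \mc{J}_l$. The map $m \mapsto g(m) := \sum_k \delta_k \sqrt{a_k^2 + m}$ is analytic on $[1,2]$, and by the divided-difference analysis supplied by Lemmas \ref{le4}--\ref{le8} of the Appendix, at least one derivative $g^{(q)}(m)$ with $q \leq l$ is bounded below by a computable positive power of $1/a_1^*$; the non-degeneracy uses precisely the hypothesis $\bs{j} \notin \mc{J}_l$. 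A standard Rüssmann-type argument then gives a bound of the form
\begin{equation*}
\bigl|\bigl\{m \in [1,2] : |g(m)| < \gamma\eta\bigr\}\bigr| \leq C_l\, (\gamma\eta)^{1/l} (a_1^*)^{c l}, \qquad \eta = \bigl(j_1^*/(\langle \mc{M}(\bs{j})\rangle j_2^*\cdots j_l^*)\bigr)^{\tau}.
\end{equation*}
The remaining task is to sum this bound over $l \leq r+2$, signs $\bs{\delta}$, and admissible indices $\bs{a}$. The factor $\eta^{1/l}$ carries a power of $1/(j_2^*\cdots j_l^*)$ which makes the index sum convergent provided $\tau$ exceeds a certain cubic polynomial in $r$; an explicit book-keeping shows $\tau \geq 63(r+2)^3$ is enough and produces the advertised bound $|\Theta| \leq e^{12(r+2)}\gamma^{1/(7(r+2)^2)}$. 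With both hypotheses of Corollary \ref{co21} verified for every $m \in [1,2] \setminus \Theta$, the estimates \eqref{form47} and \eqref{form48} follow immediately.
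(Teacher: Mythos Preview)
Your high-level plan is the same as the paper's: verify the perturbation estimate (the paper's Lemma~\ref{le41}) and the non-resonance/measure estimate (the paper's Lemma~\ref{le42}), then invoke Corollary~\ref{co21} with $d=1$. Two remarks, one minor and one substantive.

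\medskip
\textbf{Minor.} There is no need for $s_0=1/2$. Since $\omega_a\geq 1$ the factor $(2\omega_{a_k})^{-1/2}$ is bounded by $1$, so the coefficients of $P_n$ obey $|\tilde P_{n,\bs{j}}|\leq |\hat F_n(\mc M(\bs j))|$ and one gets $|P_n|_{0,N}<\infty$ directly; the paper takes $s_0=0$ (Lemma~\ref{le41}), which is exactly what makes the threshold $s\geq r\tau+1$ in the statement match $s\geq s_0+r\tau+\tfrac{d+1}{2}$ from Corollary~\ref{co21}.

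\medskip
\textbf{Gap in the measure estimate.} Your summation step does not close as written. The R\"ussmann-type bound you quote, applied as in Lemma~\ref{le7}, actually gives
\[
|\ms P_{\bs j}(\gamma,\tau)|\;\lesssim\;(j_2^*)^{4}\Big(\prod_{k\geq 3}j_k^*\Big)^{3l+2-\tau/(3l)}\gamma^{1/l},
\]
with \emph{no} decay in $j_1^*$ and a positive power of $j_2^*$. In the regime $\delta_1\delta_2=-1$ the two largest indices $j_1^*,j_2^*$ are not controlled by $j_3^*,\dots,j_l^*$, so the sum over $\bs j$ diverges; the factor $\eta^{1/l}$ cannot rescue this because it also carries $(j_1^*)^{\tau/l}$ in the numerator, and after Lemma~\ref{le3} one only gets decay in $j_3^*,\dots,j_l^*$. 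The paper's Lemma~\ref{le42} handles this by a three-way split: (i) if $\delta_1\delta_2=1$ then the resonance relation forces $j_1^*,j_2^*\lesssim l\prod_{k\geq 3}j_k^*$; (ii) if $\delta_1\delta_2=-1$ and $j_2^*\leq A$ one bounds $j_1^*$ in terms of $A$ and the small indices; (iii) if $\delta_1\delta_2=-1$ and $j_2^*>A$ one uses the asymptotic $\omega_{a_1}-\omega_{a_2}=(|a_1|-|a_2|)+O(\langle a_2\rangle^{-1})$ to replace the two large frequencies by an integer $n$, reducing to the shorter divisor $\ms P'_{n,\bs j'}$ of Lemma~\ref{le8}. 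The optimisation in the cutoff $A$ is what produces the exponent $\gamma^{1/(7(r+2)^2)}$ and forces the cubic threshold $\tau\geq 63(r+2)^3$. Your outline invokes Lemmas~\ref{le4}--\ref{le8} but does not use this splitting, and without it the ``remaining task is to sum'' cannot be carried out.
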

%%%%%
By taking big enough $r$ depending on $\varepsilon$, we get longer time of stability in the following corollary.
%Remark that the result is not optimal.
%%%%%
\begin{corollary}
\label{co41}
Consider the nonlinear wave equation \eqref{form41} with nonlinearity $f(x,u)$ being analytic in $\mb{T}\times\ms{U}$, where  $\ms{U}$ is a neighborhood of the origin.  Assume $0<\lambda<1/4$.  For any small enough $\varepsilon>0$, if the initial datum $(u(0,x),\dot{u}(0,x))$ satisfies $$\|(u(0,x),\dot{u}(0,x))\|_{H^{s}(\mb{T})\times H^{s-1}(\mb{T})}\leq\varepsilon$$
with $s=63(\ln\frac{1}{\varepsilon})^{4\lambda}$, then there exists a subset $\Theta\subset[1,2]$ with $|\Theta|\leq e^{-(\ln\frac{1}{\varepsilon})^{\lambda}}$ such that for any $m\in[1,2]\backslash\Theta$, the solution $u(t,x)$ satisfies
\begin{equation}
\label{form49}
\|u(t,x)\|_{H^{s}(\mb{T})}\leq 2\varepsilon,
\end{equation}
\begin{equation}
\label{form410}\sup_{a\in\mb{Z}}\langle a\rangle^{2s}|J_{a}(t)-J_{a}(0)|\leq \varepsilon^{\frac{5}{2}}
\end{equation}
for 
\begin{equation}
|t|\leq e^{\frac{1}{2}(\ln\frac{1}{\varepsilon})^{1+\lambda}}.
\end{equation}
\end{corollary}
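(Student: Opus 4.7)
The plan is to apply \Cref{th41} with both the normal-form order $r$ and the non-resonance parameter $\gamma$ depending on $\varepsilon$, and to optimize their values. The prescribed index $s=63(\ln\tfrac{1}{\varepsilon})^{4\lambda}$ is precisely what the hypothesis $s\ge r\tau+1$ of \Cref{th41}, with $\tau=63(r+2)^{3}$, forces under the natural scaling $r\sim(\ln\tfrac{1}{\varepsilon})^{\lambda}$.

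Concretely, I would set
\begin{equation*}
r=\lfloor(\ln\tfrac{1}{\varepsilon})^{\lambda}\rfloor,\qquad \tau=63(r+2)^{3},\qquad \gamma=\exp\bigl(-13(r+2)^{2}(\ln\tfrac{1}{\varepsilon})^{\lambda}\bigr).
\end{equation*}
With this choice of $\gamma$, the excluded set from \Cref{th41} satisfies
\begin{equation*}
|\Theta|\le e^{12(r+2)}\gamma^{1/(7(r+2)^{2})}\le\exp\Bigl(12(r+2)-\tfrac{13}{7}(\ln\tfrac{1}{\varepsilon})^{\lambda}\Bigr)\le e^{-(\ln 1/\varepsilon)^{\lambda}}
\end{equation*}
for $\varepsilon$ sufficiently small. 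Because $f$ is analytic on $\mb{T}\times\ms{U}$, Cauchy estimates on $\partial_{u}^{k}F(x,0)$ together with exponential decay of its Fourier coefficients in $x$ yield a bound of the form $C_{N}\le C_{0}\,N!\,\sigma^{-N}$ for some $\sigma>0$ depending only on $f$; hence $\ln C_{s+1}=O(s\ln s)=O\bigl((\ln\tfrac{1}{\varepsilon})^{4\lambda}\ln\ln\tfrac{1}{\varepsilon}\bigr)=o(\ln\tfrac{1}{\varepsilon})$, the last equality being precisely the content of the assumption $\lambda<\tfrac{1}{4}$.

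With these inputs, $\ln(1/\varepsilon_{*})=O(\ln(1/\gamma)+\ln r+\ln C_{s+1})=o(\ln\tfrac{1}{\varepsilon})$, so the smallness condition $\varepsilon\le\varepsilon_{*}$ of \Cref{th41} holds for all sufficiently small $\varepsilon$. The same bookkeeping gives $\ln c_{1}=-O(r\ln(1/\varepsilon_{*}))=-o(r\ln\tfrac{1}{\varepsilon})$, and thus the polynomial stability time satisfies
\begin{equation*}
\ln(c_{1}\varepsilon^{-r})=(1-o(1))\,r\ln\tfrac{1}{\varepsilon}\ge\tfrac{1}{2}(\ln\tfrac{1}{\varepsilon})^{1+\lambda},
\end{equation*}
yielding the desired time window. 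For the super-action estimate \eqref{form410}, I would apply \eqref{form48} with $\nu$ slightly larger than $\tfrac{1}{2}$: since $\ln c_{2}=O(\ln(1/\gamma))=o(\ln\tfrac{1}{\varepsilon})$, the bound $c_{2}\varepsilon^{3-\nu}\le\varepsilon^{5/2}$ holds and the corresponding time window $c_{1}\varepsilon^{-(r+\nu)}$ remains of the same sub-exponential order up to a factor $\varepsilon^{-o(1)}$.

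The main obstacle is the simultaneous tuning of $r$, $\gamma$, and $s$: enlarging $r$ extends the polynomial exponent but inflates $\tau$, $s$, and thereby $C_{s+1}$, while decreasing $\gamma$ shrinks $|\Theta|$ but also $\varepsilon_{*}$. The threshold $\lambda<\tfrac{1}{4}$ is exactly the regime in which the analytic Cauchy bound $C_{N}\lesssim N!/\sigma^{N}$ with $N=s+1\sim(\ln\tfrac{1}{\varepsilon})^{4\lambda}$ remains compatible with the smallness hypothesis $\varepsilon\le\varepsilon_{*}$.
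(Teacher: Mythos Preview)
Your strategy matches the paper's: take $r\sim(\ln\tfrac1\varepsilon)^\lambda$, $\tau=63(r+2)^3$, let $\gamma$ be a suitable sub-exponential in $\varepsilon$, and use the analytic estimate $C_N\le C'\bigl((N+2)/e\mu\bigr)^{N+2}$ from \Cref{le41} (your Cauchy bound $C_N\lesssim N!\,\sigma^{-N}$ is the same thing up to Stirling) to control $\varepsilon_*$, $c_1$, $c_2$. However, two of your constants need repair.

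First, your displayed measure inequality is false. Since $r+2\approx(\ln\tfrac1\varepsilon)^\lambda$, one has
\[
12(r+2)-\tfrac{13}{7}(\ln\tfrac1\varepsilon)^\lambda\approx\bigl(12-\tfrac{13}{7}\bigr)(\ln\tfrac1\varepsilon)^\lambda\to+\infty,
\]
so $|\Theta|$ is \emph{not} bounded by $e^{-(\ln 1/\varepsilon)^\lambda}$ with your $\gamma$. You need $\gamma^{1/(7(r+2)^2)}\le e^{-13(\ln 1/\varepsilon)^\lambda}$, i.e.\ the constant $13$ in the exponent of $\gamma$ must be at least $7\cdot13=91$; equivalently $\gamma=e^{-91(\ln 1/\varepsilon)^{3\lambda}}$, which is exactly the paper's choice. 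Second, with $r=\lfloor(\ln\tfrac1\varepsilon)^\lambda\rfloor$ you get $r\tau=63r(r+2)^3>63(\ln\tfrac1\varepsilon)^{4\lambda}=s$ in general, so the hypothesis $s\ge r\tau+1$ of \Cref{th41} can fail; taking instead $r=\lfloor(\ln\tfrac1\varepsilon)^\lambda\rfloor-2$ gives $r+2\le(\ln\tfrac1\varepsilon)^\lambda$ and $r\tau\le63(r+2)^4\le s$. With these two fixes your bookkeeping goes through. (The detour through a parameter $\nu$ for \eqref{form410} is unnecessary: $\ln c_2=O(\ln C_{s+1}+\ln(1/\gamma))=o(\ln\tfrac1\varepsilon)$ already yields $c_2\varepsilon^3\le\varepsilon^{5/2}$ directly.)
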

%%%%%
In order to prove Theorem \ref{th41}, we firstly estimate the nonlinearity $P$ in \eqref{form45}.
%%%%%
\begin{lemma}
\label{le41}
There exists $R'>0$ such that for any $N\geq0$, there exists $C_{N}>0$ such that for any $R\leq R'$,
\begin{equation}
\label{form411}
\langle|P|\rangle_{0,N}^{R}\leq C_{N}R.
\end{equation}
Especially, if the nonlinearity $f(x,u)$ is also analytic with respect to $x\in\mb{T}$, then we can take
\begin{equation}
\label{form412}
C_N=C'(\frac{N+2}{e\mu})^{N+2}
\end{equation}
with the constants $\mu,C'>0$.
\end{lemma}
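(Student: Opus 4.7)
The plan is to make $P$ explicit via the Taylor expansion of $F$ in $u$ composed with the Fourier expansion in $x$, and then to read off the symmetric coefficients directly. Since $f=\partial_uF$ vanishes to order two in $u$, we can write $F(x,u)=\sum_{k\geq 3}\tfrac{f_k(x)}{k!}u^k$ with $f_k(x):=\partial_u^kF(x,0)$. Substituting $u(x)=\sum_{(\delta,a)\in\mb{U}_2\times\mb{Z}}\tfrac{z_{(\delta,a)}}{\sqrt{2\omega_a}}e^{{\rm i}\delta ax}$ into \eqref{form45}, the integration over $\mb{T}$ produces the Fourier coefficient of $f_k$ at the total momentum $-\mc{M}(\bs j)$. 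Collecting the degree-$(i+2)$ part, this yields $P=\sum_{i\geq 1}P_i$ with symmetric coefficient
$$
\widetilde P_{i,\bs j}=\frac{\hat f_{i+2}(-\mc M(\bs j))}{(i+2)!\,\prod_{k=1}^{i+2}\sqrt{2\omega_{a_k}}}.
$$
Because $m\in[1,2]$ forces $\omega_a\geq 1$, the denominator is at least $2^{(i+2)/2}$, so specializing \eqref{form27} to $s=0$ gives
$$
|P_i|_{0,N}\leq\frac{1}{(i+2)!\,2^{(i+2)/2}}\sum_{b\in\mb{Z}}\langle b\rangle^{N}|\hat f_{i+2}(-b)|.
$$

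Next I would bound the Fourier-side sum under the two regularity hypotheses. For the smooth/analytic case of \eqref{form411}, analyticity of $f$ in $u$ on a disc $\{|u|\leq r_0\}$ uniformly in $x$, combined with smoothness in $x$, gives via Cauchy's estimate applied to $\partial_x^{N+1}f$ the bound $\|f_k\|_{H^{N+1}(\mb T)}\leq C'_N\,k!\,r_0^{-k}$. Cauchy--Schwarz then yields $\sum_b\langle b\rangle^{N}|\hat f_k(-b)|\leq C\,\|f_k\|_{H^{N+1}}\leq C''_N\,k!\,r_0^{-k}$. The key cancellation is that this $k!=(i+2)!$ exactly annihilates the factorial in the denominator, leaving only the geometric factor $|P_i|_{0,N}\leq C'''_N\,(\sqrt 2\,r_0)^{-(i+2)}$. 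Summing the geometric series for $R\leq R':=r_0/(2\sqrt 2)$ produces $\langle|P|\rangle_{0,N}^{R}\leq C_NR$.

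For the refined bound \eqref{form412} under joint analyticity of $f$ on a strip $\{|\Im x|<\mu\}\times\{|u|\leq r_0\}$, the same Cauchy estimate in $u$ combined with a contour shift in $x$ strengthens the Fourier decay to $|\hat f_k(b)|\leq M\,k!\,r_0^{-k}e^{-\mu|b|}$. The remaining Fourier sum is the classical
$$
\sum_{b\in\mb Z}\langle b\rangle^{N}e^{-\mu|b|}\leq 1+2e^{\mu}\frac{N!}{\mu^{N+1}}\leq C'\Big(\frac{N+2}{e\mu}\Big)^{N+2},
$$
the last inequality being a Stirling-type estimate (the maximum of $b^N e^{-\mu b}$ occurs near $b=N/\mu$). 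Re-inserting into the estimate of the first step and once more summing geometrically in $i$ gives the stated form of $C_N$. The only real difficulty is bookkeeping: one must verify that the factorial $(i+2)!$ in the symmetric coefficient cancels the factorial produced by Cauchy's estimate in $u$, so that the $i$-dependence of $|P_i|_{0,N}$ is purely geometric, while the $N$-dependence is isolated entirely in the Fourier sum over $\mb Z$ and thus separates cleanly from the index $i$.
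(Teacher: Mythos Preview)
Your proof is correct and follows essentially the same route as the paper: Taylor-expand $F$ in $u$, read off the symmetric coefficients of $P_i$ (which are Fourier coefficients of the Taylor data divided by the $\sqrt{2\omega_{a_k}}$ factors), bound those Fourier coefficients via smoothness/analyticity in $x$ together with Cauchy's estimate in $u$, and sum the resulting geometric series in $i$. The only cosmetic differences are that the paper writes $F_n$ rather than $f_k/k!$, asserts the pointwise decay $|\hat F_n(b)|\le \tilde C_N R'^{-n}\langle b\rangle^{-N-2}$ directly rather than passing through $\|f_k\|_{H^{N+1}}$ and Cauchy--Schwarz, and in the analytic case bounds $e^{-\mu|b|}\le e^{\mu}\big(\tfrac{N+2}{e\mu}\big)^{N+2}\langle b\rangle^{-N-2}$ pointwise rather than summing $\sum_b\langle b\rangle^N e^{-\mu|b|}$.
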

%%%
\begin{proof}
Expand the function $F$ in power series, namely $F(x,u)=\sum_{n=3}^{+\infty}F_{n}(x)u^{n}$.
Write $P=\sum_{n=3}^{+\infty}P_{n}$, where $P_{n}$ is a homogeneous of order $n$,
$$P_{n}=\sum_{b\in\mb{Z}}\hat{F}_{n}(-b)\sum_{\mc{M}(\bs{j})=b}\frac{z_{\bs{j}}}{\sqrt{2}^{n}\sqrt{\omega_{a_{1}}\cdots\omega_{a_{n}}}}$$
with $\hat{F}_{n}(b)$ being the Fourier coefficient of $F_{n}(x)$.
Notice that $F(x,u)$ is smooth with respect to $x$ and analytic with respect to $u$. Hence, there exists $R'>0$ such that for any $N\geq0$, there exists $\tilde{C}_{N}>0$ such that
\begin{equation}
\label{form413}
|\hat{F}_{n}(b)|\leq\frac{\tilde{C}_{N}}{R^{'n}}\langle b\rangle^{-N-2}.
\end{equation}
In view of \eqref{form27}, one has
\begin{align*}
|P_{n}|_{0,N}=\sum_{b\in\mb{Z}}\langle b\rangle^{N}|\hat{F}_{n}(-b)|\sup_{\mc{M}(\bs{j})=b}\big|\frac{1}{2^{\frac{n}{2}}\sqrt{\omega_{a_{1}}\cdots\omega_{a_{n}}}}\big|
\leq\frac{3\tilde{C}_{N}}{2^{\frac{n}{2}}R^{'n}}.
\end{align*}
Hence, in view of \eqref{form28}, for any $R\leq R'$, one has
$$\langle|P|\rangle_{0,N}^{R}=\sum_{n\geq3}|P_{n}|_{0,N}R^{n-2}\leq\frac{3}{2(\sqrt{2}-1)}\frac{\tilde{C}_{N}}{R^{'3}}R.$$
Let $C_{N}=\frac{3}{2(\sqrt{2}-1)}\frac{\tilde{C}_{N}}{R^{'3}}$ and then we  get \eqref{form411}.
When $f(x,u)$ is analytic, there exist constants $C'',\mu>0$ such that for any $n\geq3$,  the Fourier coefficient $\hat{F}_{n}(b)$ satisfies the estimate
$$|\hat{F}_{n}(b)|\leq\frac{C''e^{-\mu|b|}}{R^{'n}}\leq \frac{C''e^{\mu}}{R^{'n}}(\frac{N+2}{e\mu})^{N+2}\langle b\rangle^{-N-2}.$$
Hence, we draw the conclusion with $C'=\frac{3C''e^{\mu}}{2(\sqrt{2}-1)R^{'3}}$.
\end{proof}
%%%%%
Next, we will consider the property of the family of frequencies $\{\omega_{a}\}_{a\in\mb{Z}}$ in \eqref{form46}. For $\tau\geq0$, $\gamma>0$ and $\bs{j}=(\delta_{k},a_{k})_{k=1}^{l}\in(\mb{U}_{2}\times\mb{Z})^{l}\backslash\mc{J}_{l}$ with $l\geq3$, define the resonant set
\begin{equation}
\label{form414}
\ms{P}_{\bs{j}}(\gamma,\tau)=\big\{m\in[1,2] \mid |\delta_{1}\omega_{a_{1}}+\dots+\delta_{l}\omega_{a_{l}}|<\gamma(\frac{j_{1}^{*}}{\langle\mc{M}(\bs{j})\rangle j_{2}^{*}\dots j_{l}^{*}})^{\tau}\big\}.
\end{equation}
Remark that for any permutation $\sigma$,  $\ms{P}_{\sigma(\bs{j})}(\gamma,\tau)=\ms{P}_{\bs{j}}(\gamma,\tau)$. Hence, assume that $j_{k}^{*}=\langle j_{k}\rangle$ without loss of generality in the following.
%have the measure estimate of the resonant set $\ms{P}_{\bs{j}}(\gamma,\tau)$, seeing  Lemma \ref{le7} in Appendix.
%we apply Lemma \ref{le7} and Lemma \ref{le8} in Appendix 
%
Now, we estimate the measure of
\begin{equation}
\label{form415}
\Theta:=\bigcup_{l=3}^{r+2}\bigcup_{\bs{j}\in(\mb{U}_{2}\times\mb{Z})^{l}\backslash\mc{J}_{l}}\ms{P}_{\bs{j}}(\gamma,\tau).
\end{equation}
%%%%%
\begin{lemma}
\label{le42}
If $\tau\geq63(r+2)^{3}$ and $0<\gamma<1$, then
\begin{equation}
\label{form416}
|\Theta|\leq e^{12(r+2)}\gamma^{\frac{1}{7(r+2)^{2}}}.
\end{equation}
\end{lemma}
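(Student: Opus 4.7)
The plan is to bound each $|\ms{P}_{\bs{j}}(\gamma,\tau)|$ individually via a sub-level-set estimate for the analytic function
\begin{equation*}
f_{\bs{j}}(m):=\sum_{k=1}^{l}\delta_{k}\sqrt{a_{k}^{2}+m},\qquad m\in[1,2],
\end{equation*}
and then to sum the resulting bound over all $l\in[3,r+2]$ and all $\bs{j}\in(\mb{U}_{2}\times\mb{Z})^{l}\setminus\mc{J}_{l}$. The hypothesis $\bs{j}\notin\mc{J}_{l}$ is precisely what forces $f_{\bs{j}}\not\equiv 0$: in the absence of a pairing $\sigma$ with $\delta_{\sigma(k)}=-\delta_{k}$ and $|a_{\sigma(k)}|=|a_{k}|$, the square roots cannot cancel identically in $m$.

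\textbf{Derivative lower bound and single-set estimate.} Differentiating term by term,
\begin{equation*}
f_{\bs{j}}^{(p)}(m)=c_{p}\sum_{k=1}^{l}\delta_{k}(a_{k}^{2}+m)^{1/2-p},\qquad c_{p}=\prod_{i=0}^{p-1}\bigl(\tfrac{1}{2}-i\bigr)\neq 0,
\end{equation*}
and for $p$ large enough this expression is dominated by the terms with smallest $|a_{k}|$. Combining this dominance with the non-pairing hypothesis, and using the appendix Lemmas \ref{le4}--\ref{le8}, one extracts a lower bound of the form $\sup_{m\in[1,2]}|f_{\bs{j}}^{(p)}(m)|\geq (j_{1}^{*})^{-\beta}$ for an explicit $p\leq 7(r+2)^{2}$ and an exponent $\beta$ polynomial in $r$. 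A standard sublevel-set bound for analytic functions of one real variable then yields
\begin{equation*}
|\ms{P}_{\bs{j}}(\gamma,\tau)|\leq C_{l}\Bigl(\gamma\,(j_{1}^{*})^{\beta}\Bigl(\tfrac{j_{1}^{*}}{\langle\mc{M}(\bs{j})\rangle j_{2}^{*}\cdots j_{l}^{*}}\Bigr)^{\tau}\Bigr)^{1/p},
\end{equation*}
with $C_{l}$ growing at most geometrically in $l$.

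\textbf{Summation and choice of constants.} To bound $|\Theta|$ I fix $l$ together with the decreasing rearrangement $(j_{1}^{*},\ldots,j_{l}^{*})$, then sum over the signs $\delta_{k}\in\{\pm 1\}$, the permutations that redistribute the ordered values into positions, and the momentum $\mc{M}(\bs{j})$. The assumption $\tau\geq 63(r+2)^{3}$ ensures that $\tau/p$ is much larger than $\beta$ and than $l$, so that the weight $(j_{1}^{*}/(j_{2}^{*}\cdots j_{l}^{*}))^{\tau/p}$ in the estimate above decays fast enough in every smaller index to render each one-dimensional sum absolutely convergent at cost $O(1)$; the purely combinatorial factor $2^{l}\,l!$ for signs and permutations contributes an $e^{O(r)}$ term. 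Collecting everything and summing over $l\in[3,r+2]$ produces
\begin{equation*}
|\Theta|\leq\sum_{l=3}^{r+2}\sum_{\bs{j}\notin\mc{J}_{l}}|\ms{P}_{\bs{j}}(\gamma,\tau)|\leq e^{12(r+2)}\,\gamma^{1/(7(r+2)^{2})},
\end{equation*}
which is \eqref{form416}.

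\textbf{Main obstacle.} The delicate step will be the second one: turning the qualitative obstruction $\bs{j}\notin\mc{J}_{l}$ into a \emph{quantitative} polynomial lower bound on $f_{\bs{j}}^{(p)}$ in which both $p$ and $\beta$ are made explicit in terms of $l$. The bookkeeping has to be tight enough that, after the summation in the third step, the exponents on $\gamma$ and on $e$ match exactly $\tfrac{1}{7(r+2)^{2}}$ and $12(r+2)$ as stated in \eqref{form416}. This is precisely the role of the technical appendix Lemmas \ref{le4}--\ref{le8}, so the proof will mostly consist of invoking them and carefully tracking the constants.
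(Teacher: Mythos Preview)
Your outline has a genuine gap at the summation step, and it is precisely the obstacle that forces the paper's three-case decomposition rather than the uniform treatment you propose.

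Consider indices with $\delta_{1}\delta_{2}=-1$ and both $j_{1}^{*},j_{2}^{*}$ large. Then
\[
\delta_{1}\omega_{a_{1}}+\delta_{2}\omega_{a_{2}}=\pm(|a_{1}|-|a_{2}|)+O\!\bigl((j_{2}^{*})^{-1}\bigr),
\]
and \emph{every} $m$-derivative of this combination is $O((j_{2}^{*})^{-1})$. Hence any lower bound on $|f_{\bs{j}}^{(p)}(m)|$ must come from the remaining modes $a_{3},\dots,a_{l}$ and is therefore independent of $j_{1}^{*},j_{2}^{*}$. Your single-set bound then reads, at best,
\[
|\ms{P}_{\bs{j}}(\gamma,\tau)|\le C_{l}\,\gamma^{1/p}\Bigl(\prod_{k\ge 3}j_{k}^{*}\Bigr)^{-\tau/(3p)+O(1)},
\]
with no decay whatsoever in $j_{1}^{*},j_{2}^{*}$. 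But for each fixed integer $n=|a_{1}|-|a_{2}|$ there are infinitely many pairs $(a_{1},a_{2})$ realising it, so the sum $\sum_{\bs{j}}|\ms{P}_{\bs{j}}|$ diverges. Your claimed weight $(j_{1}^{*}/(j_{2}^{*}\cdots j_{l}^{*}))^{\tau/p}$ does not help here: by Lemma~\ref{le3} it is bounded above by $(\prod_{k\ge 3}j_{k}^{*})^{-\tau/(3p)}$, which is again independent of $j_{1}^{*},j_{2}^{*}$.

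The paper deals with this by splitting $\Theta_{l}$ into three pieces. When $\delta_{1}\delta_{2}=1$ the smallness of $|f_{\bs{j}}|$ forces $j_{1}^{*},j_{2}^{*}\le 2l\prod_{k\ge 3}j_{k}^{*}$, and Lemma~\ref{le7} applies directly. When $\delta_{1}\delta_{2}=-1$ one introduces a cutoff $A=\gamma^{-1/(7l)}(\prod_{k\ge 3}j_{k}^{*})^{\tau/(21l)}$: for $j_{2}^{*}\le A$ one still sums Lemma~\ref{le7} over the finite range $j_{2}^{*}\le A$; for $j_{2}^{*}>A$ the key observation is that $\ms{P}_{\bs{j}}(\gamma,\tau)\subset\ms{P}'_{n,\bs{j}'}(3/A)$ with $\bs{j}'=(j_{3},\dots,j_{l})$, so the union over all $(j_{1},j_{2})$ with fixed $n$ \emph{collapses to a single set}, estimated by Lemma~\ref{le8}. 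It is this collapse, not a summable single-set bound, that handles the infinitely many large pairs. Without an analogous mechanism your argument cannot close.
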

%%%%%
\begin{proof}
The process is parallel to that in the subsection 6.2 of \cite{B03}. However, for the purpose to obtain concrete constants on the right side of  \eqref{form416}, we need give the details.
Denote $\Theta_{l}:=\bigcup_{\bs{j}\in(\mb{U}_{2}\times\mb{Z})^{l}\backslash\mc{J}_{l}}\ms{P}_{\bs{j}}(\gamma,\tau)$.
We divide the set $\Theta_{l}$ into three parts, namely $\Theta_{l}:=\Theta_{l}^{(1)}\cup\Theta_{l}^{(2)}\cup\Theta_{l}^{(3)}$,
where
\begin{align*}
&\Theta_{l}^{(1)}=\quad\bigcup_{\substack{\bs{j}\in(\mb{U}_{2}\times\mb{Z})^{l}\backslash\mc{J}_{l}\\\delta_{1}\delta_{2}=1}}\ms{P}_{\bs{j}}(\gamma,\tau),
\\&\Theta_{l}^{(2)}=\bigcup_{\substack{\bs{j}\in(\mb{U}_{2}\times\mb{Z})^{l}\backslash\mc{J}_{l}\\\delta_{1}\delta_{2}=-1,j_{2}^{*}\leq A}}\ms{P}_{\bs{j}}(\gamma,\tau),
\\&\Theta_{l}^{(3)}=\bigcup_{\substack{\bs{j}\in(\mb{U}_{2}\times\mb{Z})^{l}\backslash\mc{J}_{l}\\\delta_{1}\delta_{2}=-1,j_{2}^{*}>A}}\ms{P}_{\bs{j}}(\gamma,\tau)
\end{align*}
with taking
\begin{equation}
\label{form417}
A=\gamma^{-\frac{1}{7l}}(\prod_{k=3}^{l}j_{k}^{*})^{\frac{\tau}{21l}}.
\end{equation}
(i) Estimate of $\Theta_{l}^{(1)}$.
\\\indent In view of $\delta_{1}\delta_{2}=1$ and the fact $\langle a_{k}\rangle/\sqrt2\leq\omega_{a_{k}}\leq\sqrt{2}\langle a_{k}\rangle$, for any $m\in\ms{P}_{\bs{j}}(\gamma,\tau)$ with $\gamma<1$, one has
$$j_{1}^{*}+j_{2}^{*}\leq\sqrt{2}(\omega_{a_1}+\omega_{a_2})\leq\sqrt{2}(1+\omega_{a_{3}}+\cdots+\omega_{a_{l}})\leq2(1+j_{3}^{*}+\cdots+j_{l}^{*})\leq2l\prod_{k=3}^{l}j_{k}^{*}.$$
Hence, $j_{1}^{*}\leq2l\prod_{k=3}^{l}j_{k}^{*}$ and $j_{2}^{*}\leq l\prod_{k=3}^{l}j_{k}^{*}.$
By Lemma \ref{le7} in Appendix and $\tau\geq9l^{2}+30l$, one has
 \begin{equation}
 \label{form418}
|\ms{P}_{\bs{j}}(\gamma,\tau)|\leq2^{3l+6}l^{9}(j_{2}^{*})^{4}(\prod_{k=3}^{l}j_{k}^{*})^{3l+2-\frac{\tau}{3l}}\gamma^{\frac{1}{l}}
\leq2^{3l+6}l^{13}(\prod_{k=3}^{l}j_{k}^{*})^{-4}\gamma^{\frac{1}{l}}.
\end{equation}
Notice that
\begin{equation}
\label{form419}
\sum_{j\in\mb{U}_{2}\times\mb{Z}}\langle j\rangle^{-2}\leq\frac{2\pi^{2}}{3}.
\end{equation}
Taking the sum about $\bs{j}\in(\mb{U}_{2}\times\mb{Z})^{l}\backslash\mc{J}_{l}$ with $\delta_{1}\delta_{2}=1$, by \eqref{form418} and \eqref{form419}, one has
 \begin{align}
\label{form420}
|\Theta_{l}^{(1)}|\leq2^{3l+11}l^{15}(\frac{2\pi^{2}}{3})^{l-2}\gamma^{\frac{1}{l}}.
\end{align}
(ii) Estimate of $\Theta_{l}^{(2)}$.
\\\indent In view of $\delta_{1}\delta_{2}=-1$ and $j_{2}^{*}\leq A$, for any $m\in\ms{P}_{\bs{j}}(\gamma,\tau)$ with $\gamma<1$, one has
$$j_{1}^{*}\leq\sqrt{2}\,\omega_{a_1}\leq\sqrt{2}(1+\omega_{a_2}+\cdots+\omega_{a_l})\leq2(1+j_{2}^{*}+\cdots j_{l}^{*})\leq2lA\prod_{k=3}^{l}j_{k}^{*}.$$
By Lemma \ref{le7} in Appendix and \eqref{form417}, one has
\begin{align*}
|\Theta_{l}^{(2)}|&\leq\sum_{\substack{\bs{j}\in(\mb{U}_{2}\times\mb{Z})^{l}\\j_{2}^{*}\leq A,j_{1}^{*}\leq2lAj_{3}^{*}\cdots j_{l}^{*}}}2^{3l+6}l^{9}(j_{2}^{*})^{4}(\prod_{k=3}^{l}j_{k}^{*})^{3l+2-\frac{\tau}{3l}}\gamma^{\frac{1}{l}}
\\&\leq2^{3l+11}l^{10}\sum_{j_{3},\dots,j_{l}}A^{6}(\prod_{k=3}^{l}j_{k}^{*})^{3l+3-\frac{\tau}{3l}}\gamma^{\frac{1}{l}}
\\&=2^{3l+11}l^{10}\sum_{j_{3},\dots,j_{l}}(\prod_{k=3}^{l}j_{k}^{*})^{3l+3-\frac{\tau}{21l}}\gamma^{\frac{1}{7l}}.
\end{align*}
Then by $\tau\geq63l^{2}+105l$ and \eqref{form419}, one has
\begin{equation}
\label{form421}
|\Theta_{l}^{(2)}|\leq2^{3l+11}l^{10}\sum_{j_{3},\dots,j_{l}}(\prod_{k=3}^{l}j_{k}^{*})^{-2}\gamma^{\frac{1}{7l}}\leq2^{3l+11}l^{10}(\frac{2\pi^{2}}{3})^{l-2}\gamma^{\frac{1}{7l}}.
\end{equation}
(iii) Estimate of $\Theta_{l}^{(3)}$.
\\\indent In view of $\delta_{1}\delta_{2}=-1$ and $j_{2}^{*}>A$, one has $\delta_{1}\omega_{a_{1}}+\delta_{2}\omega_{a_{2}}=n+r_{a_{1},a_{2}}$ with $n=\delta_{1}(|a_{1}|-|a_{2}|)$ and
\begin{equation}
\label{form422}
|r_{a_{1},a_{2}}|\leq\frac{2}{\langle a_{2}\rangle}\leq\frac{2}{A}
\end{equation}
for any $m\in[1,2]$.
By Lemma \ref{le3} in Appendix, one has
\begin{equation}
\label{form423}
\gamma(\frac{j_{1}^{*}}{\langle\mc{M}(\bs{j})\rangle j_{2}^{*}\dots j_{l}^{*}})^{\tau}\leq\gamma(j_{3}^{*}\dots j_{l}^{*})^{-\frac{\tau}{3}}\leq\frac{1}{A}.
\end{equation}
Write $\bs{j}':=(\delta_{k},a_{k})_{k=3}^{l}$ and let
\begin{equation}
\label{form424}
\ms{P}'_{n,\bs{j}'}(\frac{3}{A}):=\big\{m\in[1,2] \mid |n+\delta_{3}\omega_{a_{3}}+\dots+\delta_{l}\omega_{a_{l}}|<\frac{3}{A}\big\}.
\end{equation}
By \eqref{form422} and \eqref{form423}, one has $\ms{P}_{\bs{j}}(\gamma,\tau)\subseteq\ms{P}'_{n,\bs{j}'}(\frac{3}{A})$ and thus $\Theta_{l}^{(3)}\subseteq\bigcup_{n,\bs{j}'}\ms{P}'_{n,\bs{j}'}(\frac{3}{A})$.
Notice that
$$|n|\leq|r_{a_1,a_2}|+1+\omega_{a_3}+\cdots+\omega_{a_l}\leq\frac{2}{A}+1+\sqrt{2}(j_{3}^{*}+\cdots+j_{l}^{*})\leq2l\prod_{k=3}^{l}j_{k}^{*}.$$
Then by Lemma \ref{le8} in Appendix, one has
\begin{align*}
|\Theta_{l}^{(3)}|&\leq\sum_{n,\;j_{3},\dots,j_{l}}2^{3l-3}l^{6}(\prod_{k=3}^{l}j_{k}^{*})^{3l-6}(\frac{3}{A})^{\frac{1}{l}}
\\&\leq2^{3l-1}l^{7}\sum_{j_{3},\dots,j_{l}}(\prod_{k=3}^{l}j_{k}^{*})^{3l-5}(\frac{3}{A})^{\frac{1}{l}}
\\&\leq2^{3l}l^{7}\sum_{j_{3},\dots,j_{l}}(\prod_{k=3}^{l}j_{k}^{*})^{3l-5-\frac{\tau}{21l^{2}}}\gamma^{\frac{1}{7l^{2}}}.
\end{align*}
 Hence, by  $\tau\geq63l^{3}-63l^{2}$ and \eqref{form419}, one has
\begin{equation}
\label{form425}
|\Theta_{l}^{(3)}|\leq2^{3l}l^{7}\sum_{j_{3},\dots,j_{l}}(\prod_{k=3}^{l}j_{k}^{*})^{-2}\gamma^{\frac{1}{7l^{2}}}\leq2^{3l}l^{7}(\frac{2\pi^{2}}{3})^{l-2}\gamma^{\frac{1}{7l^{2}}}.
\end{equation}
\indent To sum up, when $\tau\geq63l^{3}$, by estimates \eqref{form420}, \eqref{form421} and \eqref{form425}, one has
\begin{align}
\label{form426}
|\Theta_{l}|&\leq|\Theta_{l}^{(1)}|+|\Theta_{l}^{(2)}|+|\Theta_{l}^{(3)}|
\\\notag&\leq2^{3l+11}l^{15}(\frac{2\pi^{2}}{3})^{l-2}\gamma^{\frac{1}{l}}+2^{3l+11}l^{10}(\frac{2\pi^{2}}{3})^{l-2}\gamma^{\frac{1}{7l}}+2^{3l}l^{7}(\frac{2\pi^{2}}{3})^{l-2}\gamma^{\frac{1}{7l^{2}}}
%\\\notag&\leq(2^{5}l^{9}+2^{10}l^{9}+2^{4}l^{6})(\frac{16\pi^{2}}{3})^{l-2}\gamma^{\frac{1}{6l^{2}}}
\\\notag&\leq2^{18}l^{15}(\frac{16\pi^{2}}{3})^{l-2}\gamma^{\frac{1}{7l^{2}}}
\\\notag&\leq e^{9l+6}\gamma^{\frac{1}{7l^{2}}},
\end{align}
 where the last inequality follows from $2^{18}l^{15}\leq e^{5l+14}$ and $16\pi^{2}/3\leq e^{4}$.
Hence, when $\tau\geq63(r+2)^{3}$, one has
\begin{equation}
\label{form427}
|\Theta|=|\bigcup_{l=3}^{r+2}\Theta_{l}|\leq\sum_{l=3}^{r+2}e^{9l+6}\gamma^{\frac{1}{7l^{2}}}\leq e^{9(r+2)+7}\gamma^{\frac{1}{7(r+2)^{2}}}.
\end{equation}
The proof is completed.
\end{proof}
%%%%%
\begin{proof}[Proof of Theorem {\sl\ref{th41}}]
By Lemma \ref{le41}, there exists $R'>0$ such that for any $N\geq0$, there exists $C_{N}>0$ such that $R\leq R'$, one has $\langle|P|\rangle_{0,N}^{R}\leq C_{N}R$. This means that the condition \eqref{form211} in Theorem \ref{th21} holds with $s_0=0$.
Notice that $\rho=96$ in \eqref{form212}. Assuming $\gamma<480r^{4}C_{N}R'$, then $R_{*}=\frac{\gamma}{960r^{4}C_{N}}$ in \eqref{form213}.
Then for any $s\geq r\tau+1$ and $N=s+1$, one has $\varepsilon_*=\frac{\gamma}{75\times2^{13}r^{4}C_{N}}$ in \eqref{form219}.
Assuming $\gamma<e^{-84(r+2)^{3}}$, then by Lemma \ref{le42} and Corollary \ref{co21}, we can get Theorem \ref{th41} with $c_{1}=\frac{4^{r-5}\varepsilon_{*}^{r}}{45C_{N}}$ and $c_{2}=1+\frac{225\times2^{13}C_{N}}{\gamma}$.
\end{proof}
%%%%%
Finally, we apply Theorem \ref{th41} to prove corollary \ref{co41}.
%%%%%
\begin{proof}[Proof of Corollary {\sl\ref{co41}}]
Take $r=[(\ln\frac{1}{\varepsilon})^{\lambda}]-2$, $\tau=63(r+2)^{3}$ and $\gamma=e^{-91(\ln\frac{1}{\varepsilon})^{3\lambda}}$.
Then $s=63(\ln\frac{1}{\varepsilon})^{4\lambda}\geq r\tau+1$ and
$$|\Theta|\leq e^{12(r+2)}\gamma^{\frac{1}{7(r+2)^{2}}}\leq e^{-(\ln\frac{1}{\varepsilon})^{\lambda}}.$$
By \eqref{form412} with $N=s+1$ in Lemma \ref{le41}, there exist the constants $\mu,C'>0$ such that
 \begin{equation}
 \label{form428}
 c_{1}\varepsilon^{-r}=\tilde{c}_{1}(\frac{e\mu}{N+2})^{(N+2)(r+1)}(\frac{\tilde{c}_{0}\gamma }{r^{4}})^{r}\varepsilon^{-r},
\end{equation}
\begin{equation}
\label{form429}
c_{2}\varepsilon^{3}=\big(1+\frac{\tilde{c}_2}{\gamma}(\frac{N+2}{e\mu})^{N+2}\big)\varepsilon^{3}
 \end{equation}
 with $\tilde{c}_{0}=\frac{1}{75\times2^{11}C'}$, $\tilde{c}_1=\frac{1}{45\times2^{10}C'}$ and $\tilde{c}_2=225\times2^{13}C'$.
Notice that for $0<\lambda<1/4$ and small enough $\varepsilon$, one has
 \begin{equation}
  \label{form430}
 (N+2)(r+1)\ln\frac{N+2}{e\mu}\leq\frac{1}{6}r\ln\frac{1}{\varepsilon},
 \end{equation}
\begin{equation}
 \label{form431}\ln\frac{r^{4}}{\tilde{c}_0\gamma}\leq\frac{1}{6}\ln\frac{1}{\varepsilon}.
 \end{equation}
By \eqref{form428}, \eqref{form430} and \eqref{form431}, one has
\begin{align*}
c_{1}\varepsilon^{-r}&\geq\tilde{c}_{1}\varepsilon^{-\frac{2}{3}r}\geq\varepsilon^{-\frac{1}{2}(\ln\frac{1}{\varepsilon})^{\lambda}}=e^{\frac{1}{2}(\ln\frac{1}{\varepsilon})^{1+\lambda}}.
\end{align*}
Similarly, by \eqref{form429}, one has
\begin{align*}
c_{2}\varepsilon^{3}\leq\varepsilon^{\frac{5}{2}}.
\end{align*}
 Hence, we can draw the conclusion.
%
%By Theorem \ref{th41}, for any $s\geq63(\ln\frac{1}{\varepsilon})^{4\lambda}$,  there exists a subset $\Theta\subset[1,2]$ with $|\Theta|\leq e^{-\frac{1}{7}(\ln\frac{1}{\varepsilon})^{\lambda}}$ such that for any $|t|\leq \varepsilon^{\frac{1}{2}(\ln\frac{1}{\varepsilon})^{\lambda}}$, \eqref{form43} and \eqref{form43'} hold.
\end{proof}

%=================================
\subsection{Nonlinear Schr\"{o}dinger equation in high dimension}
\label{sec42}
Consider the nonlinear Schr\"{o}dinger equation on the $d$-dimensional torus
\begin{equation}
\label{form432}
{\rm i}\pa_{t}u=-\Delta u+V*u+g(x,u,\bar{u}),\quad x\in\mb{T}^{d},
\end{equation}
where $d\geq2$,  the real value function $g$ is smooth with respect to $x$ and analytic of order at least three with respect to $u,\bar{u}$ at $(u,\bar{u})=(0,0)$.
The potential $V(x)=\sum_{a\in\mb{Z}^{d}}\frac{v_{a}}{\langle a\rangle^{m}}e^{{\rm i}a\cdot x}$, where $m>\frac{d}{2}$ is fixed, and $\{v_{a}\}_{a\in\mb{Z}^{d}} $ is chosen in the space $\ms{V}$ given by
\begin{equation}
\label{form433}
\ms{V}=\big\{\{v_{a}\}_{a\in\mb{Z}^{d}} \mid v_{a}\in[-\frac{1}{2},\frac{1}{2}]\;\text{for}\;a\in\mb{Z}^{d}\big\}.
\end{equation}
%%%
\\\indent Expand $u(x)=\sum_{a\in\mb{Z}^{d}}z_{a}e^{{\rm i}a\cdot x}$ and then the Hamiltonian function is given by
\begin{equation}
\label{form434}
H(z,\bar{z})=H_{0}(z,\bar{z})+P(z,\bar{z})
=\sum_{a\in\mb{Z}^{d}}\omega_{a}|z_{a}|^{2}+\frac{1}{(2\pi)^{d}}\int_{\mb{T}^{d}}G(x,u,\bar{u})dx,
\end{equation}
where
\begin{equation}
\label{form435}
\omega_{a}=|a|^{2}+\frac{v_{a}}{\langle a\rangle^{m}}
\end{equation}
and $G$ is the primitive function of $g$ with respect to the variable $\bar{u}$, i.e., $g=\pa_{\bar{u}}G$.
\\\indent Recall the notation $\rho=2^{d+4}3^{d}$ in \eqref{form212}, and in this subsection, we fix a real number $\tau\geq15(m+2d+1)$.  Then we have long time stability of solutions of the equation \eqref{form432}.
%%%%%
\begin{theorem}
\label{th42}
Fix a positive integer $r$. For any small enough $\gamma>0$, there exists a subset $\ms{V}'\subset\ms{V}$ satisfying $|\ms{V}'|\leq\rho^{r+3}\gamma^{\frac{m}{m+2d}}<1$ such that for any $\{v_a\}_{a\in\mb{Z}^{d}}\in\ms{V}\backslash\ms{V}'$ and $s\geq r\tau+\frac{d+1}{2}$, if the initial datum $u(0,x)\in H^{s}(\mb{T}^{d})$ satisfies
$\varepsilon:=\|u(0,x)\|_{H^{s}(\mb{T}^{d})}\leq\varepsilon_{*}$,
then for $|t|\leq c_{1}\varepsilon^{-r}$, the solution $u(t,x)$ of the nonlinear Schr\"{o}dinger equation \eqref{form432} satisfies
\begin{equation}
\|u(t)\|_{H^{s}(\mb{T}^{d})}\leq 2\varepsilon,
\end{equation}
\begin{equation}
\sup_{a\in\mb{Z}^{d}}\langle a\rangle^{2s}|I_{a}(t)-I_{a}(0)|\leq c_2\varepsilon^{3},
\end{equation}
where $\varepsilon_{*}=\frac{3\gamma}{200\rho^{2}r^4C_{s+1}}$, $c_{1}=\frac{(4\varepsilon_{*})^{r}}{5\rho^{2}C_{s+1}}$ and $c_{2}=1+\frac{200\rho^2C_{s+1}}{\gamma}$ with the constant $C_{s+1}>0$.
\end{theorem}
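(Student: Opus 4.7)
The plan is to deduce \Cref{th42} from \Cref{th21} and \Cref{co21}, in the variant provided by \Cref{re21}: since the frequencies $\omega_a=|a|^2+v_a/\langle a\rangle^m$ depend on $a$ itself (not merely $|a|$), the normal form should depend only on the ordinary actions $I_a$, which requires \eqref{form218} to hold for all $\bs j\notin\mc{I}_l$ rather than merely for $\bs j\notin\mc{J}_l$. Two ingredients must be supplied. First, the tame bound \eqref{form211} on $P$: writing $G(x,u,\bar u)=\sum_{p+q\geq 3}G_{p,q}(x)u^p\bar u^q$ and each coefficient as a Fourier series $G_{p,q}(x)=\sum_{b\in\mb{Z}^d}\hat G_{p,q}(b)e^{\mathrm{i}b\cdot x}$, smoothness in $x$ gives $|\hat G_{p,q}(b)|\leq \tilde C_N R'^{-(p+q)}\langle b\rangle^{-N-d-1}$ for every $N\geq 0$, and summing exactly as in \Cref{le41} yields $\langle|P|\rangle_{0,N}^{R}\leq C_N R$ for $R\leq R'$, so \eqref{form211} holds with $s_0=0$.

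Second, I would verify the required non-resonance on a set of large measure. The crucial decomposition is
\begin{equation*}
\delta_1\omega_{a_1}+\cdots+\delta_l\omega_{a_l}=\sum_{k=1}^l\delta_k|a_k|^2+\sum_{a\in\mb{Z}^d}\frac{c_a(\bs j)}{\langle a\rangle^m}\,v_a,\qquad c_a(\bs j):=\sum_{k:\,a_k=a}\delta_k.
\end{equation*}
The integers $c_a(\bs j)$ vanish for every $a$ exactly when the indices $(a_k,\delta_k)$ can be paired with matching $a_k$ and opposite $\delta_k$, i.e.\ precisely when $\bs j\in\mc{I}_l$. Hence for $\bs j\notin\mc{I}_l$ there is some $a^*=a^*(\bs j)$ with $|c_{a^*}(\bs j)|\geq 1$, so the linear form above depends on $v_{a^*}$ with slope at least $1/\langle a^*\rangle^m$. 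Fubini in the single coordinate $v_{a^*}$ then gives
\begin{equation*}
|\ms{V}'_{\bs j}|:=\Big|\Big\{v\in\ms{V}:\,|\delta_1\omega_{a_1}+\cdots+\delta_l\omega_{a_l}|<\gamma\Big(\frac{j_1^*}{\langle\mc{M}(\bs j)\rangle j_2^*\cdots j_l^*}\Big)^{\tau}\Big\}\Big|\leq 2\gamma\langle a^*\rangle^{m}\Big(\frac{j_1^*}{\langle\mc{M}(\bs j)\rangle j_2^*\cdots j_l^*}\Big)^{\tau}.
\end{equation*}

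The main obstacle is the measure sum $|\ms{V}'|=\big|\bigcup_{l=3}^{r+2}\bigcup_{\bs j\notin\mc{I}_l}\ms{V}'_{\bs j}\big|\leq\rho^{r+3}\gamma^{m/(m+2d)}$. I would select $a^*(\bs j)$ to minimize $\langle a^*\rangle$ among modes with $c_a(\bs j)\neq 0$, and split the sum at a truncation level $N_0>0$ to be optimized. In the low regime $\langle a^*\rangle\leq N_0$, the slice measure is $\leq 2\gamma N_0^{m}\bigl(j_1^*/(\langle\mc{M}(\bs j)\rangle j_2^*\cdots j_l^*)\bigr)^{\tau}$; using Lemma~A.3 (so that $j_1^*\leq\langle\mc{M}(\bs j)\rangle j_2^*\cdots j_l^*$) together with the assumption $\tau\geq 15(m+2d+1)$, the resulting series in $j_2,\dots,j_l$ converges, each summation $\sum_{j}\langle j\rangle^{-(d+1)}$ contributing a factor absorbed in a power of $\rho$. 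In the high regime $\langle a^*\rangle>N_0$, one uses the decay of the tail $\sum_{\langle a\rangle>N_0}\langle a\rangle^{-(m+2d)}\lesssim N_0^{-2d}$ (here $\tau$ large is needed to absorb the growing factor $\langle a^*\rangle^m$). Balancing $\gamma N_0^{m}$ against $N_0^{-2d}$ by setting $N_0=\gamma^{-1/(m+2d)}$ produces per-$l$ contribution $\lesssim\rho^l\gamma^{m/(m+2d)}$, and summing over $l=3,\dots,r+2$ yields the advertised total $\rho^{r+3}\gamma^{m/(m+2d)}$.

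With non-resonance secured on $\ms{V}\setminus\ms{V}'$, the remainder follows from \Cref{co21} applied through \Cref{re21}: the normal form $\ms{Z}$ delivered by \Cref{th21} Poisson-commutes with each $I_a$, so only $\ms{R}$ drives action drift. Taking $N=s+(d+1)/2$ and reading off $\varepsilon_*$, $c_1$, $c_2$ from \eqref{form219}--\eqref{form220} (with $C_N=C_{s+1}$ from Step~1) yields $\|u(t)\|_{H^s(\mb{T}^d)}\leq 2\varepsilon$ and $\sup_a\langle a\rangle^{2s}|I_a(t)-I_a(0)|\leq c_2\varepsilon^3$ on $|t|\leq c_1\varepsilon^{-r}$, completing the proof.
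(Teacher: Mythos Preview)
Your reduction to \Cref{th21}, \Cref{co21} and \Cref{re21} is correct, and the verification of \eqref{form211} with $s_0=0$ via Fourier decay of the coefficients $G_{p,q}$ is fine (this is exactly \Cref{le43}). The per-$\bs j$ slice bound
\[
|\ms V'_{\bs j}|\leq 2\gamma\,\langle a^*\rangle^{m}\Big(\frac{j_1^*}{\langle\mc M(\bs j)\rangle j_2^*\cdots j_l^*}\Big)^{\tau}
\]
is also right. The gap is in the summation step: your splitting on the size of the \emph{smallest} active mode $a^*$ does not control the sum over the two \emph{largest} modes. After Lemma~\ref{le3} the ratio is bounded by $(\prod_{k\ge 3}j_k^*)^{-\tau/3}$, which carries no decay in $j_1^*$ or $j_2^*$; for fixed $j_3,\dots,j_l$ the sum over $j_1,j_2$ (equivalently over $a_1,a_2\in\mb Z^d$) diverges. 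Your ``low regime'' claim that the series in $j_2,\dots,j_l$ converges therefore fails, and the ``high regime'' sketch (a tail sum $\sum_{\langle a\rangle>N_0}\langle a\rangle^{-(m+2d)}$) does not address this either, since the problematic variables are $j_1^*,j_2^*$, not $a^*$.

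The paper's argument (\Cref{le44}) handles precisely this point by splitting on the \emph{sign relation} $\delta_1\delta_2$ and on the size of $j_2^*$. If $\delta_1\delta_2=1$, the integer part $|a_1|^2+|a_2|^2$ is bounded by the remaining modes, so $j_1^*,j_2^*\lesssim\prod_{k\ge3}j_k^*$ and the sum closes. If $\delta_1\delta_2=-1$ one writes $\delta_1\omega_{a_1}+\delta_2\omega_{a_2}=n+\nu_{a_1,a_2}$ with $n=\delta_1(|a_1|^2-|a_2|^2)\in\mb Z$ and $|\nu_{a_1,a_2}|\le\langle a_2\rangle^{-m}$; when $j_2^*>A:=\gamma^{-1/(m+2d)}(\prod_{k\ge3}j_k^*)^{\tau/(3(m+2d))}$ this correction is smaller than the target bound, and the resonance set depends on $(a_1,a_2)$ only through the integer $n$, whose range is controlled by $\prod_{k\ge3}j_k^*$. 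This replaces the divergent double sum over $(a_1,a_2)$ by a finite sum over $n$. The intermediate case $j_2^*\le A$ is handled directly because $j_1^*$ is then also bounded by $A\prod_{k\ge3}j_k^*$. Balancing the three pieces yields the exponent $\gamma^{m/(m+2d)}$. Your scheme misses the mechanism that tames $j_1^*,j_2^*$; to complete the proof you need this sign/size decomposition rather than a cut on $\langle a^*\rangle$.
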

%%%%%
Take big enough $r$ depending on $\varepsilon$, and then we get longer time of stability.
%%%%%
\begin{corollary}
\label{co42}
Consider the nonlinear Schr\"{o}dinger equation \eqref{form432} with nonlinearity $g(x,u,\bar{u})$ being analytic in $\mb{T}\times\ms{U}$, where $\ms{U}$ is a neighborhood of the origin. For any small enough $\varepsilon>0$, if the initial datum $u(0,x)$ satisfies
$\|u(0,x)\|_{H^{s}(\mb{T}^{d})}\leq\varepsilon$,
where $s=\frac{\log_{\rho}\frac{1}{\varepsilon}}{12\log_{\rho}\log_{\rho}\frac{1}{\varepsilon}}$,
then there exists a subset $\ms{V}'\subset\ms{V}$ satisfying $$|\ms{V}'|\leq \rho^{-\frac{\log_{\rho}\frac{1}{\varepsilon}}{60\tau\log_{\rho}\log_{\rho}\frac{1}{\varepsilon}}}$$
such that for any $\{v_a\}_{a\in\mb{Z}^{d}}\in\ms{V}\backslash\ms{V}'$, the solution $u(t,x)$ satisfies
\begin{equation}
\|u(t)\|_{H^{s}(\mb{T}^{d})}\leq 2\varepsilon,
\end{equation}
\begin{equation}
\sup_{a\in\mb{Z}^{d}}\langle a\rangle^{2s}|I_{a}(t)-I_{a}(0)|\leq \varepsilon^{\frac{5}{2}}
\end{equation}
for 
\begin{equation}
\label{form8-5-3}
|t|\leq\rho^{\frac{(\log_{\rho}\frac{1}{\varepsilon})^{2}}{24\tau\log_{\rho}\log_{\rho}\frac{1}{\varepsilon}}}.
\end{equation}
\end{corollary}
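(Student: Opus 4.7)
The plan is to apply Theorem \ref{th42} with an $\varepsilon$-dependent choice of the iteration order $r$ and the non-resonance parameter $\gamma$, just as Corollary \ref{co41} was obtained from Theorem \ref{th41}. The analyticity of $g$ in $x$ will be used to promote the abstract constant $C_N$ appearing in Theorem \ref{th42} into an explicit expression of the form $C_N=C'(\tfrac{N+2}{e\mu})^{N+2}$ (the analogue for NLS of the second half of \Cref{le41}); this explicit dependence on $N$ is what makes it possible to retain a usable lower bound for $c_1\varepsilon^{-r}$ when both $N$ and $r$ are permitted to grow with $\tfrac{1}{\varepsilon}$.

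Concretely, setting $L=\log_\rho\tfrac{1}{\varepsilon}$ and $L'=\log_\rho L$, I would choose
\[
r=\Bigl\lfloor\tfrac{L}{24\tau L'}\Bigr\rfloor,\qquad \gamma=\rho^{-\alpha L/L'}
\]
with $\alpha$ a fixed constant (depending on $m$, $d$, $\tau$) large enough for the measure bound below and small enough that the smallness requirement on $\gamma$ in Theorem \ref{th42} is respected for small $\varepsilon$. The hypothesis $s=\tfrac{L}{12L'}$ comfortably satisfies $s\ge r\tau+\tfrac{d+1}{2}$, so $N=s+\tfrac{d+1}{2}$ is permissible. With the analytic form of $C_N$, both $\varepsilon_*=\tfrac{3\gamma}{200\rho^2 r^4 C_N}$ and $c_1=\tfrac{(4\varepsilon_*)^r}{5\rho^2 C_N}$ become explicit, and I would check the hypothesis $\varepsilon\le\varepsilon_*$ by a direct comparison of the logarithms.

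The main estimate is then a logarithmic accounting. Taking $\log_\rho$ of $c_1\varepsilon^{-r}$ yields, to leading order,
\[
\log_\rho(c_1\varepsilon^{-r})\;\ge\; rL-(r+1)\log_\rho C_N - O(r\log_\rho r)-O(r\log_\rho\gamma^{-1}).
\]
Since $\log_\rho C_N\sim N\log_\rho N\sim\tfrac{L}{12L'}\cdot L'=\tfrac{L}{12}$ (using $\log_\rho s\sim L'$), the dominant gain $rL\sim\tfrac{L^2}{24\tau L'}$ beats the loss $rN\log_\rho N\sim\tfrac{L^2}{288\tau}$ by a factor of roughly $L'$, giving the claimed time $\rho^{L^2/(24\tau L')}$. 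For the excluded set, $|\mathscr V'|\le\rho^{r+3}\gamma^{m/(m+2d)}$ translates into
\[
\log_\rho|\mathscr V'|\;\le\; r+3-\tfrac{\alpha m}{m+2d}\cdot\tfrac{L}{L'},
\]
and choosing $\alpha$ so that $\tfrac{\alpha m}{m+2d}\ge\tfrac{1}{24\tau}+\tfrac{1}{60\tau}$ forces this quantity below $-\tfrac{L}{60\tau L'}$. The bounds $\|u(t)\|_{H^s}\le 2\varepsilon$ and $\langle a\rangle^{2s}|I_a(t)-I_a(0)|\le\varepsilon^{5/2}$ then follow from \eqref{form48}-type inequalities in Theorem \ref{th42} by absorbing the $c_2=1+\tfrac{200\rho^2 C_{s+1}}{\gamma}$ factor into $\varepsilon^{-1/2}$, which holds because $\log_\rho c_2\ll L$.

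The delicate step is the balance between $r$, $N=s+(d+1)/2$, and $\gamma$: if $r$ is increased, $c_1\varepsilon^{-r}$ grows like $\varepsilon^{-r}$ but also decays like $C_N^{r+1}r^{4r}\gamma^{-r}$, and the analytic bound $C_N\sim N^N$ means $\log_\rho C_N$ is comparable to $s$, which is itself coupled to $r$ through $s\ge r\tau$. The specific choice $s\sim L/L'$ with $r\sim s/\tau$ is precisely the regime where $rL$ exceeds $rN\log_\rho N$ by a factor $\sim L'$, producing the sub-exponential time \eqref{form8-5-3}; tightening any of these three choices simultaneously and verifying consistency with the smallness constraints $\gamma<1$, $\varepsilon\le\varepsilon_*$, and the hypotheses of \Cref{le41}-style Fourier estimates is where the main bookkeeping effort will lie.
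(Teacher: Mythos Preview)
Your approach is essentially the paper's: pick $r$ and $\gamma$ depending on $\varepsilon$, invoke the analytic bound $C_N=C'((N+2)/(e\mu))^{N+2}$ from Lemma~\ref{le43}, and do the logarithmic bookkeeping; the paper takes $r=\lfloor L/(12\tau L')\rfloor-3$ and $\gamma=\rho^{-L/(2\tau L')}$ and checks the two clean inequalities $(N+2)(r+1)\log_\rho\tfrac{N+2}{e\mu}\le\tfrac16 rL$ and $\log_\rho(r^4/(\tilde c_0\gamma))\le\tfrac16 L$, whence $c_1\varepsilon^{-r}\ge\tilde c_1\varepsilon^{-2r/3}$. One arithmetic slip to fix: your loss term should be $rN\log_\rho N\sim L^2/(288\tau L')$ (you dropped the $L'$), so the gain beats the loss by the constant factor $12$, not by $L'$; this is why $r$ must be taken near $L/(12\tau L')$ rather than $L/(24\tau L')$ to reach the constant $1/24$ in \eqref{form8-5-3}.
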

%%%%%
Especially, if nonlinearity of  the equation \eqref{form432} does not contain the spatial variable $x$ explicitly, then the denominator $\log_{\rho}\log_{\rho}\frac{1}{\varepsilon}$ in the stability time \eqref{form8-5-3} could be deleted. See the following corollary.
%%%%%
\begin{corollary}
\label{re41}
Consider the nonlinear Schr\"{o}dinger equation \eqref{form432} with nonlinearity $g(u,\bar{u})$ being analytic in $\ms{U}$, where $\ms{U}$ is a neighborhood of the origin.
For any $\varepsilon>0$ small enough and $s\geq\frac{\tau}{9}\log_{\rho}\frac{1}{\varepsilon}$, if the initial datum $u(0,x)$ satisfies
$\|u(0,x)\|_{H^{s}(\mb{T}^{d})}\leq\varepsilon,$
then there exists a subset $\ms{V}'\subset\ms{V}$ satisfying $|\ms{V}'|\leq\varepsilon^{\frac{1}{30}}$ such that for any $\{v_a\}_{a\in\mb{Z}^{d}}\in\ms{V}\backslash\ms{V}'$, the solution $u(t,x)$ satisfies
\begin{equation}
\|u(t)\|_{H^{s}(\mb{T}^{d})}\leq 2\varepsilon,
\end{equation}
\begin{equation}
\sup_{a\in\mb{Z}^{d}}\langle a\rangle^{2s}|I_{a}(t)-I_{a}(0)|\leq \varepsilon^{\frac{5}{2}}
\end{equation}
for
\begin{equation}
|t|\leq\rho^{\frac{1}{46}(\log_{\rho}\frac{1}{\varepsilon})^{2}}.
\end{equation}
\end{corollary}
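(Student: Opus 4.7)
The plan is to mirror the proof of Corollary \ref{co42}, exploiting the absence of an explicit spatial variable in $g(u,\bar u)$ to eliminate the factor $((N+2)/(e\mu))^{N+2}$ that would otherwise appear in $C_N$. The first step is to prove an analogue of Lemma \ref{le41}: when $g$ is independent of $x$, momentum conservation forces every homogeneous component $P_n$ of $P$ in \eqref{form434} to be supported on multi-indices $\bs{j}$ with $\mc{M}(\bs{j})=0$. Consequently the outer sum $\sum_{b\in\mb{Z}^d}\langle b\rangle^{N-s}$ in the definition \eqref{form27} of the $s,N$-norm reduces to its single $b=0$ term, so the bound $\langle|P|\rangle_{0,N}^{R}\leq CR$ holds with a constant $C$ that depends only on the analyticity radius of $g$ in $(u,\bar u)$ and not on $N$.

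With this $N$-uniform constant in hand, I would apply Theorem \ref{th21} and Corollary \ref{co21} with $s_{0}=0$ and $C_{N}\equiv C$; the critical radii $R_{*}=\gamma/(10\rho r^{4}C)$ and $\varepsilon_{*}=3R_{*}/(20\rho)$ then carry no hidden $N$-growth. The measure estimate $|\ms{V}'|\leq \rho^{r+3}\gamma^{m/(m+2d)}$ derived inside the proof of Theorem \ref{th42} depends only on the random separation of $\omega_{a}$ under the potential $\{v_{a}\}$ and is unaffected by whether $g$ is $x$-dependent; after possibly enlarging $\ms{V}'$ so that the non-resonance condition is imposed on $(\mb{U}_{2}\times\mb{Z}^{d})^{l}\setminus\mc{I}_{l}$, Remark \ref{re21} applies and the normal form $\ms{Z}$ depends only on the individual actions $I_{a}$, giving the bound \eqref{form224} on $|I_{a}(t)-I_{a}(0)|$.

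It remains to calibrate $r$ and $\gamma$ in terms of $\varepsilon$. Writing $L:=\log_{\rho}(1/\varepsilon)$, the hypothesis $s\geq(\tau/9)L$ permits choosing $r\sim L/9$, saturating $r\tau+(d+1)/2\leq s$. Pick $\gamma=\varepsilon^{\beta}$ for a suitable $\beta\in(0,1)$ so that simultaneously $|\ms{V}'|\leq\varepsilon^{1/30}$ and $\varepsilon\leq\varepsilon_{*}$ are satisfied. Since
\begin{equation*}
c_{1}\varepsilon^{-r}=\frac{(4\varepsilon_{*})^{r}}{5\rho^{2}C}\,\varepsilon^{-r}\geq \rho^{\,r\log_{\rho}(\varepsilon_{*}/\varepsilon)-O(1)}
\end{equation*}
and $\log_{\rho}(\varepsilon_{*}/\varepsilon)=(1-\beta)L-O(\log_{\rho} r)$ with $r\sim L/9$, a straightforward optimization recovers the lower bound $\rho^{L^{2}/46}$. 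Taking $\nu=1/2$ in \eqref{form224} finally yields $\sup_{a}\langle a\rangle^{2s}|I_{a}(t)-I_{a}(0)|\leq\varepsilon^{5/2}$, after absorbing the $1/\gamma$ factor in $c_{2}$ into the gained power of $\varepsilon$.

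The main obstacle, and indeed the only essentially new point beyond Corollary \ref{co42}, is securing the $N$-independence of the seminorm constant $C$. Once that is done, the bottleneck $r\sim L/\log_{\rho} L$ of Corollary \ref{co42}, which arose from paying an $r\log_{\rho} L$ cost through the $(N+2)^{N+2}$ factor, disappears; $r$ can now be taken of order $L$, and this is precisely what promotes the stability exponent from $L^{2}/\log_{\rho} L$ to a clean $L^{2}$. The rest is a careful but routine juggling of the three inequalities (the measure bound, the smallness condition $\varepsilon\leq\varepsilon_{*}$, and the target stability time) to pin down the explicit constants $1/30$, $\tau/9$, and $1/46$.
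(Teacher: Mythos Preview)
Your strategy is the paper's strategy: the key observation that momentum conservation collapses the $b$-sum in \eqref{form27} to $b=0$, hence $C_N$ is independent of $N$, is exactly what the paper invokes (in one line, citing Lemma~\ref{le43} and the norm definition), and the rest is indeed Theorem~\ref{th42} with $r$ and $\gamma$ tied to $\varepsilon$.

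Two points in your calibration need adjustment. First, ``$r\sim L/9$, saturating $r\tau+(d+1)/2\leq s$'' is too greedy. The paper takes $r=[\tfrac{1}{30}L]-3$ and $\gamma=\varepsilon^{1/3}$; the binding constraint is not the $s$-inequality but the simultaneous requirements $|\ms{V}'|\leq\rho^{r+3}\gamma^{m/(m+2d)}\leq\varepsilon^{1/30}$ and $c_2\varepsilon^3\leq\varepsilon^{5/2}$. The latter forces $c_2\sim\gamma^{-1}\leq\varepsilon^{-1/2}$, i.e.\ $\beta\leq\tfrac12$, and combining this with the measure bound (using only $m/(m+2d)>\tfrac15$) caps $r$ well below $L/9$. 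With $r\approx L/30$ and $\beta=\tfrac13$ one gets $c_1\varepsilon^{-r}\gtrsim\varepsilon^{-2r/3}r^{-4r}\sim\rho^{L^2/45-o(L^2)}\geq\rho^{L^2/46}$.

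Second, the remark ``taking $\nu=1/2$ in \eqref{form224}'' does not do what you want: with $\nu=\tfrac12$ the action bound becomes $c_2\varepsilon^{5/2}\sim\varepsilon^{5/2-\beta}$, which is worse, not better. The paper simply uses $\nu=0$ and absorbs $c_2\sim\gamma^{-1}=\varepsilon^{-1/3}$ into the spare half-power, i.e.\ $c_2\varepsilon^3\leq\varepsilon^{5/2}$. Once you make these two corrections the argument goes through verbatim.
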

%%%%%
\indent Similarly with Lemma \ref{le41}, we prove that nonlinearity $P(z,\bar{z})$ of \eqref{form434} satisfies the assumption \eqref{form211}.
%%%%%
\begin{lemma}
\label{le43}
There exists $R'>0$ such that for any $N\geq0$, there exists $C_{N}>0$ such that for any $R\leq R'$,
\begin{equation}
\label{form436}
\langle|P|\rangle_{0,N}^{R}\leq C_{N}R.
\end{equation}
Especially, if the nonlinearity $g(x,u,\bar{u})$ is also analytic with respect to $x\in\mb{T}$, then we can take
\begin{equation}
\label{form437}
C_N=C'(\frac{N+2}{e\mu})^{N+2}
\end{equation}
with the constants $\mu, C'>0$.
\end{lemma}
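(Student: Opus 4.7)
The plan is to adapt the proof of \Cref{le41} to the Schr\"odinger setting, where the nonlinearity involves the conjugate pair $(u,\bar u)$ on a $d$-dimensional torus. First I would expand $G$ in a joint Taylor series in $(u,\bar u)$ composed with a Fourier series in $x$,
$$G(x,u,\bar u)=\sum_{p+q\geq n_{0}}G_{pq}(x)\,u^{p}\bar u^{q},\qquad G_{pq}(x)=\sum_{b\in\mb{Z}^{d}}\hat G_{pq}(b)\,e^{{\rm i}b\cdot x},$$
with $n_{0}\geq 3$ inherited from the vanishing order of $g$. Substituting $u(x)=\sum_{a}z_{a}e^{{\rm i}a\cdot x}$ and its conjugate, then integrating over $\mb{T}^{d}$ via orthogonality, the homogeneous piece of order $n$ of the Hamiltonian becomes
$$P_{n}(z,\bar z)=\sum_{p+q=n}\sum_{b\in\mb{Z}^{d}}\hat G_{pq}(-b)\sum_{\substack{\bs{j}\in(\mb{U}_{2}\times\mb{Z}^{d})^{n}\\\mc{M}(\bs{j})=b}}c_{\bs{j}}\,z_{\bs{j}},$$
where $c_{\bs{j}}$ records the placement of the $p$ entries with $\delta=+1$ and the $q$ entries with $\delta=-1$ and is uniformly bounded by $2^{n}$. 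In contrast to \Cref{le41}, no $\omega_{a}^{-1/2}$-weights appear, so the supremum in the definition \eqref{form27} of $|P_{n}|_{0,N}$ reduces essentially to $|\hat G_{pq}(-b)|$.

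Second, I would derive the Fourier-coefficient decay from the hypotheses on $g$. Joint analyticity of order at least three in $(u,\bar u)$ provides, via Cauchy estimates, a radius $R'>0$ and uniform control $\|G_{pq}\|_{C^{0}}\lesssim 1/R'^{p+q}$ on the Taylor coefficients; smoothness of $g$ in $x$ then gives, by integration by parts, constants $\tilde C_{N}>0$ such that
$$|\hat G_{pq}(b)|\leq \frac{\tilde C_{N}}{R'^{p+q}}\,\langle b\rangle^{-(N+d+1)}\qquad \text{for every }N\geq 0.$$
Inserting this into the $s,N$-norm with $s=0$ cancels the weight $\langle b\rangle^{N}$ and leaves the summable tail $\sum_{b\in\mb{Z}^{d}}\langle b\rangle^{-d-1}<\infty$, yielding
$$|P_{n}|_{0,N}\leq\frac{\hat C_{N}}{R'^{n}}$$
(up to the $2^{n}$ combinatorial factor, which can be absorbed by shrinking $R'$). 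Summing the resulting geometric series in \eqref{form28} produces $\langle|P|\rangle_{0,N}^{R}\leq C_{N}R$ for any sufficiently small $R$, which is \eqref{form436}.

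For the analytic-in-$x$ case, the polynomial Fourier decay is upgraded to an exponential one $|\hat G_{pq}(b)|\leq C''e^{-\mu|b|}/R'^{p+q}$ coming from the Paley--Wiener bound on a strip of analyticity. Splitting $e^{-\mu|b|}=e^{-\mu|b|/2}\cdot e^{-\mu|b|/2}$ uses the first factor to retain summability in $b$, while optimizing the second against the weight via $\sup_{b}e^{-\mu|b|/2}\langle b\rangle^{M}\leq (2M/(e\mu))^{M}$ with $M=N+d+1$ gives a bound of the asserted form \eqref{form437}. The main obstacle is the bookkeeping in this last step: one must verify that the residual $d$-dependent polynomial factors in $N$ arising when $d\geq 2$ (as opposed to $d=1$ in the wave case) can be swallowed either by a small shift in $\mu$ or by enlarging $C'$, without spoiling the uniformity of the bound in $n$. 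Once this is settled, the argument transcribes \Cref{le41} line by line.
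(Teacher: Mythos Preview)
Your proposal is correct and follows essentially the same route as the paper: expand $G$ in a Taylor series in the complex variables, read off the homogeneous pieces $P_n$ as Fourier sums, bound the Fourier coefficients by smoothness (respectively analyticity) in $x$ combined with Cauchy estimates in $(u,\bar u)$, and then sum a geometric series in $R/R'$. The paper's own proof is terser --- it does not separate the $(p,q)$ bidegree or track the sign-pattern combinatorics --- but the mechanism is identical. If anything, your choice of decay exponent $-(N+d+1)$ is the right one for the $d$-dimensional lattice sum $\sum_{b\in\mb{Z}^d}\langle b\rangle^{N}|\hat G(b)|$; the paper's printed $\langle b\rangle^{-N-2}$ is only adequate when $d=1$ and should be read as a harmless slip in the constant bookkeeping.
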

%%%
\begin{proof}
Expand the function $G$ in power series, namely
$G(x,u)=\sum_{n=3}^{+\infty}G_{n}(x)u^{n}$.
Write $P=\sum_{n=3}^{+\infty}P_{n}$, where $P_{n}$ is homogeneous of order $n$,
$$P_{n}=\sum_{b\in\mb{Z}}\hat{G}_{n}(-b)\sum_{\mc{M}(\bs{j})=b} z_{\bs{j}}$$
with $\hat{G}_{n}(b)$ being the Fourier coefficient of $G_{n}(x)$.
Notice that $G(x,u,\bar{u})$ is smooth with respect to $x$ and analytic with respect to $u,\bar{u}$. Hence, there exists $R'>0$ such that for any $N\geq0$, there exists $\tilde{C}_{N}>0$ such that
\begin{equation}
\label{form438}
|\hat{G}_{n}(b)|\leq\frac{\tilde{C}_{N}}{(2R')^{n}}\langle b\rangle^{-N-2}.
\end{equation}
In view of \eqref{form27}, one has
\begin{align*}
|P_{n}|_{0,N}=\sum_{b\in\mb{Z}^{d}}\langle b\rangle^{N}|\hat{G}_{n}(-b)|\leq\frac{3\tilde{C}_{N}}{(2R')^{n}}.
\end{align*}
Hence, in view of \eqref{form28}, for any $R\leq R'$, one has
$$\langle|P|\rangle_{0,N}^{R}=\sum_{n\geq3}|P_{n}|_{0,N}R^{n-2}\leq\frac{3\tilde{C}_{N}}{4R^{'3}}R.$$
Let $C_{N}=\frac{\tilde{3C_{N}}}{4R^{'3}}$ and then we get \eqref{form436}.
When $g(x,u,\bar{u})$ is analytic, there exist constants $C'',\mu>0$ such that for any $n\geq3$,  the Fourier coefficient $\hat{G}_{n}(b)$ satisfies the estimate
$$|\hat{G}_{n}(b)|\leq\frac{C''e^{-\mu|b|}}{(2R')^{n}}\leq \frac{C''e^{\mu}}{(2R')^{n}}(\frac{N+2}{e\mu})^{N+2}\langle b\rangle^{-N-2}.$$
Hence, we draw the conclusion with $C'=\frac{3C''e^{\mu}}{4R^{'3}}$.
\end{proof}
%%%%%
%So the Theorem \ref{th21} is applicable for the nonlinear Schr\"{o}dinger equation \eqref{form422}.
The following Lemma shows that for almost all $V\in\ms{V}$, the family of frequencies $\{\omega_{a}\}_{a\in\mb{Z}^{d}}$ in \eqref{form435} is non-resonant up to order $r+2$.
%%%%%
\begin{lemma}
\label{le44}
For any $\bs{j}=(\delta_{k},a_{k})_{k=1}^{l}\in(\mb{U}_{2}\times\mb{Z}^{d})^{l}\backslash\mc{I}_{l}$, define the resonant set
$$\ms{V}_{\bs{j}}(\gamma,\tau)=\big\{\{v_{a}\}_{a\in\mb{Z}^{d}}\in[-\frac{1}{2},\frac{1}{2}]^{\mb{Z}^{d}} \mid |\delta_{1}\omega_{a_{1}}+\dots+\delta_{l}\omega_{a_{l}}|<\gamma(\frac{j_{1}^{*}}{\langle\mc{M}(\bs{j})\rangle j_{2}^{*}\dots j_{l}^{*}})^{\tau}\big\}.$$
If $\tau\geq15(m+2d+1)$ and $0<\gamma<1$, then
\begin{equation}
\label{form439}
|\bigcup_{l=3}^{r+2}\bigcup_{\bs{j}\in(\mb{U}_{2}\times\mb{Z}^{d})^{l}\backslash\mc{I}_{l}}\ms{V}_{\bs{j}}(\gamma,\tau)|\leq\rho^{r+3}\gamma^{\frac{m}{m+2d}}
\end{equation}
with $\rho=2^{d+4}3^{d}$.
\end{lemma}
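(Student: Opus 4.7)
\emph{Plan.} I would prove this by a classical measure-theoretic argument that exploits the perturbative structure of the frequencies $\omega_a = |a|^2 + v_a/\langle a\rangle^m$: a Fubini slice estimate controls each individual resonant set, and then a threshold argument balances the $\gamma$-smallness against the $\langle a_*\rangle^m$ loss, with Lemma~\ref{le3} of the appendix supplying the decay needed to sum over tuples.

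For $\bs{j} = (\delta_k, a_k)_{k=1}^l \notin \mc{I}_l$, the combination of frequencies decomposes as
$$\delta_1\omega_{a_1} + \cdots + \delta_l\omega_{a_l} = \sum_{k=1}^l \delta_k|a_k|^2 + \sum_{a \in \mb{Z}^d} n_a(\bs{j})\, \frac{v_a}{\langle a\rangle^m}, \quad n_a(\bs{j}) := \sum_{k:\, a_k = a} \delta_k.$$
The condition $\bs{j} \notin \mc{I}_l$ forces at least one $n_{a_*}$ to be nonzero, and I fix any such $a_* \in \{a_1, \dots, a_l\}$. Holding all $v_a$ fixed except $v_{a_*}$, the combination is affine in $v_{a_*}$ with slope $n_{a_*}/\langle a_*\rangle^m$ of magnitude at least $\langle a_*\rangle^{-m}$. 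Fubini then yields the per-tuple estimate
$$|\ms{V}_{\bs{j}}(\gamma,\tau)| \leq \min\Bigl(1,\; 2\gamma\, \langle a_*\rangle^m\, R_{\bs{j}}^\tau\Bigr), \qquad R_{\bs{j}} := \frac{j_1^*}{\langle\mc{M}(\bs{j})\rangle j_2^*\cdots j_l^*}.$$

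The remaining work is to sum this bound over $\bs{j} \in (\mb{U}_2 \times \mb{Z}^d)^l \setminus \mc{I}_l$ and then over $l = 3, \dots, r+2$. I would split tuples at the threshold $T = \gamma^{-1/(m+2d)}$ on $\langle a_*\rangle$. When $\langle a_*\rangle \leq T$, apply the Fubini bound with $\langle a_*\rangle^m \leq T^m$ and use Lemma~\ref{le3} to replace $R_{\bs{j}}^\tau$ by a genuine decay factor in $j_3^*\cdots j_l^*$; the $\mb{Z}^d$ lattice count at each level provides a polynomial factor $\prod_k (j_k^*)^{d-1}$, and the chosen $\tau \geq 15(m+2d+1)$ is exactly the slack required so that the series converges with constants growing no faster than $\rho^l$. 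This contribution is of size $O(\gamma\, T^{m+2d}) = O(\gamma^{m/(m+2d)})$ after the threshold is plugged in. When $\langle a_*\rangle > T$, use the trivial slice bound $|\ms{V}_{\bs{j}}| \leq 1$ on those tuples where the Fubini estimate already exceeds $1$, and the Fubini bound otherwise; Lemma~\ref{le3} is again invoked in a dyadic-tail fashion to produce a contribution of size $O(T^{-m}) = O(\gamma^{m/(m+2d)})$. Summing over $3 \leq l \leq r+2$ absorbs all combinatorial constants into the prefactor $\rho^{r+3}$.

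The main obstacle is the bookkeeping in this summation step: the $\langle a_*\rangle^m$ factor from Fubini, the lattice counts $(j_k^*)^{d-1}$ in $\mb{Z}^d$, and the decay from Lemma~\ref{le3} all compete, and the threshold $T = \gamma^{-1/(m+2d)}$ is the unique balance that produces the exponent $m/(m+2d)$. The skeleton is patterned on the proof of Lemma~\ref{le42} for the wave equation, but simpler in one respect because the NLS integer part $\sum_k \delta_k |a_k|^2$ does not require the separate near-cancellation case $|\Theta_l^{(3)}|$ used there; the higher dimension $d \geq 2$, on the other hand, costs an extra factor $\langle a\rangle^{d-1}$ per coordinate summation, which is precisely what enlarges the exponent from $m$ to $m+2d$ in the denominator of the final estimate.
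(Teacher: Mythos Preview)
Your per-tuple Fubini bound is correct, but the summation step does not close as sketched, and the claim that the near-cancellation case is unnecessary for NLS is precisely where the argument breaks. The decay $R_{\bs{j}}^\tau \le (\prod_{k\ge 3} j_k^*)^{-\tau/3}$ from Lemma~\ref{le3} controls only the small indices $j_3^*,\dots,j_l^*$; it gives no decay whatsoever in the two largest indices, and your threshold on $\langle a_*\rangle$ does not supply it because $a_*$ need not be $a_1$ or $a_2$. Concretely, take $l=4$ with $j_3=(1,b)$, $j_4=(-1,b)$: the small indices cancel in $\sum_a n_a v_a/\langle a\rangle^m$, so $a_*$ is forced into $\{a_1,a_2\}$. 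For each of the infinitely many pairs $(a_1,a_2)$ with $\delta_1\delta_2=-1$ and $|a_1|^2=|a_2|^2$ one obtains a distinct $\bs{j}\notin\mc{I}_4$, and neither the Fubini bound $2\gamma\langle a_*\rangle^m R_{\bs{j}}^\tau$ (which \emph{grows} like $\langle a_1\rangle^m$) nor the trivial bound $1$ is summable over these pairs. No dyadic-tail use of Lemma~\ref{le3} rescues this, since $j_3^*,j_4^*$ are fixed.

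The paper's proof of Lemma~\ref{le44} in fact carries out the full three-case split parallel to Lemma~\ref{le42}. When $\delta_1\delta_2=1$, non-emptiness of $\ms{V}_{\bs j}$ forces $|a_1|^2+|a_2|^2$ small, bounding both large indices by $\prod_{k\ge3} j_k^*$. When $\delta_1\delta_2=-1$ and $j_2^*\le A$, both are bounded by $A$, and the factor $A^{2d}$ in the lattice count over $(a_1,a_2)$ is exactly what produces the exponent $m/(m+2d)$ once $A=\gamma^{-1/(m+2d)}(\prod_{k\ge3} j_k^*)^{\tau/(3(m+2d))}$ is chosen. The essential third case, $\delta_1\delta_2=-1$ with $j_2^*>A$, writes $\delta_1\omega_{a_1}+\delta_2\omega_{a_2}=n+\nu$ with $n=\pm(|a_1|^2-|a_2|^2)\in\mb{Z}$ and $|\nu|<A^{-m}$, so that $\ms{V}_{\bs j}\subset\ms{V}'_{n,\bs{j}'}$ depends only on $(n,j_3,\dots,j_l)$. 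The infinitely many $(a_1,a_2)$ sharing the same $(n,\bs{j}')$ then contribute a \emph{single} set to the union rather than a divergent sum, and one sums over $(n,\bs{j}')$ instead. This many-to-one reduction is the missing ingredient; the case you dismiss is the crux.
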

%%%
\begin{proof}
Remark that for any permutation $\sigma$,  $\ms{V}_{\sigma(\bs{j})}(\gamma,\tau)=\ms{V}_{\bs{j}}(\gamma,\tau)$. Hence, assume that $j_{k}^{*}=\langle j_{k}\rangle$ without loss of generality.
In view of $\bs{j}\in(\mb{U}_{2}\times\mb{Z}^{d})^{l}\backslash\mc{I}_{l}$, by Lemma \ref{le3} in Appendix, one has
\begin{equation}
\label{form440}
|\ms{V}_{\bs{j}}(\gamma,\tau)|\leq2(j_{l}^{*})^{m}\gamma(\frac{j_{1}^{*}}{\langle\mc{M}(\bs{j})\rangle j_{2}^{*}\dots j_{l}^{*}})^{\tau}\leq2\gamma(\prod_{k=3}^{l}j_{k}^{*})^{m-\frac{\tau}{3}}.
\end{equation}
Similarly with the proof of Lemma \ref{le42}, we divide $\bigcup_{\bs{j}\in(\mb{U}_{2}\times\mb{Z}^{d})^{l}\backslash\mc{I}_{l}}\ms{V}_{\bs{j}}(\gamma,\tau)$ into three parts, namely $\bigcup_{\bs{j}\in(\mb{U}_{2}\times\mb{Z}^{d})^{l}\backslash\mc{I}_{l}}\ms{V}_{\bs{j}}(\gamma,\tau)=\ms{V}^{(1)}_{l}+\ms{V}^{(2)}_{l}+\ms{V}^{(3)}_{l}$, where
\begin{align*}
&\ms{V}^{(1)}_{l}=\bigcup_{\substack{\bs{j}\in(\mb{U}_{2}\times\mb{Z}^{d})^{l}\backslash\mc{I}_{l}\\\delta_{1}\delta_{2}=1}}\ms{V}_{\bs{j}}(\gamma,\tau),
\\&\ms{V}^{(2)}_{l}=\bigcup_{\substack{\bs{j}\in(\mb{U}_{2}\times\mb{Z}^{d})^{l}\backslash\mc{I}_{l}\\\delta_{1}\delta_{2}=-1,j_{2}^{*}\leq A}}\ms{V}_{\bs{j}}(\gamma,\tau),
\\&\ms{V}^{(3)}_{l}=\bigcup_{\substack{\bs{j}\in(\mb{U}_{2}\times\mb{Z}^{d})^{l}\backslash\mc{I}_{l}\\\delta_{1}\delta_{2}=-1,j_{2}^{*}>A}}\ms{V}_{\bs{j}}(\gamma,\tau)
\end{align*}
with taking
\begin{equation}
\label{form441}
A=\gamma^{-\frac{1}{m+2d}}(\prod_{k=3}^{l}j_{k}^{*})^{\frac{\tau}{3(m+2d)}}.
\end{equation}
(i) Estimate of $\ms{V}_{l}^{(1)}$.
\\\indent In view of $\delta_{1}\delta_{2}=1$, for any $\{v_{a}\}_{a\in\mb{Z}^{d}}\in\ms{V}_{\bs{j}}(\gamma,\tau)$ with $\gamma<1$, one has $|a_1|^{2}+|a_2|^{2}\leq|a_3|^{2}+\cdots+|a_l|^{2}+1+\frac{l}{2}$ and then
$$(j_{1}^{*})^{2}\leq2(1+|a_1|^{2})\leq2(1+|a_3|^{2}+\cdots+|a_l|^{2}+1+\frac{l}{2})\leq3l(\prod_{k=3}^{l}j_{k}^{*})^{2}.$$
Hence, one has $j_{2}^{*}\leq j_{1}^{*}\leq\sqrt{3l}\prod_{k=3}^{l}j_{k}^{*}$. Taking the sum about $\bs{j}\in(\mb{U}_{2}\times\mb{Z}^{d})^{l}\backslash\mc{I}_{l}$ with $\delta_{1}\delta_{2}=1$, by \eqref{form440}, the fact $\tau\geq3m+9d+3$ and Lemma \ref{le1} in Appendix, one has
\begin{align}
\label{form442}
|\ms{V}_{l}^{(1)}|&\leq4\sum_{j_{3},\cdots,j_{l}}2\gamma(\prod_{k=3}^{l}j_{k}^{*})^{m-\frac{\tau}{3}}\big(2\sqrt{3l}\prod_{k=3}^{l}j_{k}^{*}\big)^{2d}
\\\notag&\leq8(12l)^{d}\sum_{j_{3},\cdots,j_{l}}\gamma(\prod_{k=3}^{l}j_{k}^{*})^{-(d+1)}
\\\notag&\leq8(12l)^{d}(2\times3^{d})^{l-2}\gamma.
\end{align}
(ii)  Estimate of $\ms{V}_{l}^{(2)}$.
\\\indent In view of $\delta_{1}\delta_{2}=-1$ and $j_{2}^{*}\leq A$, for any $\{v_a\}_{a\in\mb{Z}^{d}}\in\ms{V}_{\bs{j}}(\gamma,\tau)$ with $\gamma<1$, one has
$|a_1|^{2}\leq|a_2|^{2}+\cdots+|a_l|^{2}+1+\frac{l}{2}$ and then
$$(j_{1}^{*})^{2}\leq2(1+|a_1|^{2})\leq2(1+|a_2|^{2}+\cdots+|a_l|^{2}+1+\frac{l}{2})\leq (lA\prod_{k=3}^{l}j_{k}^{*})^{2}.$$
Hence, $j_{1}^{*}\leq lA\prod_{k=3}^{l}j_{k}^{*}$.
By \eqref{form440}, \eqref{form441},the fact $\tau\geq3(m+2d+1)(m+2d)/m$ and Lemma \ref{le1} in Appendix, one has
\begin{align}
\label{form443}
|\ms{V}_{l}^{(2)}|&\leq 4\sum_{j_{3},\cdots,j_{l}}2\gamma(\prod_{k=3}^{l}j_{k}^{*})^{m-\frac{\tau}{3}}(2A)^{d}(2lA\prod_{k=3}^{l}j_{k}^{*})^{d}
\\\notag&\leq8(4l)^{d}\sum_{j_{3},\cdots,j_{l}}(\prod_{k=3}^{l}j_{k}^{*})^{m+d-\frac{\tau m}{3(m+2d)}}\gamma^{\frac{m}{m+2d}}
\\\notag&\leq8(4l)^{d}(2\times3^{d})^{l-2}\gamma^{\frac{m}{m+2d}}.
\end{align}
(iii) Estimate of $\ms{V}_{l}^{(3)}$.
\\\indent In view of $\delta_{1}\delta_{2}=-1$ and $j_{2}^{*}>A$, one has $\omega_{a_{1}}-\omega_{a_{2}}=n+\nu_{a_{1},a_{2}}$ with $n=\delta_{1}(|a_{1}|^{2}-|a_{2}|^{2})$ and
\begin{equation}
\label{form444}
|\nu_{a_{1},a_{2}}|=\big|\frac{v_{a_{1}}}{\langle a_{1}\rangle^{m}}-\frac{v_{a_{2}}}{\langle a_{2}\rangle^{m}}\big|\leq\frac{1}{\langle a_{2}\rangle^{m}}<\frac{1}{A^{m}}.
\end{equation}
By Lemma \ref{le3} in Appendix and \eqref{form441}, one has
\begin{equation}
\label{form445}
\gamma(\frac{j_{1}^{*}}{\mc{M}(\bs{j})j_{2}^{*}\dots j_{l}^{*}})^{\tau}\leq\gamma(j_{3}^{*}\dots j_{l}^{*})^{-\frac{\tau}{3}}\leq\frac{1}{A^{m}}.
\end{equation}
Write $\bs{j}'=(j_{3},\cdots,j_{l})$ and let
$$\ms{V}'_{n,\bs{j}'}(A')=\big\{\{v_{a}\}_{a\in\mb{Z}^{d}}\in[-\frac{1}{2},\frac{1}{2}]^{\mb{Z}^{d}} \mid |n+\delta_{3}\omega_{a_{3}}+\dots+\delta_{l}\omega_{a_{l}}|<\frac{2}{A^{m}}\big\}.$$
By \eqref{form444} and \eqref{form445}, one has $\ms{V}_{\bs{j}}(\gamma,\tau)\subseteq\ms{V}'_{n,\bs{j}'}(A)$ and thus $\ms{V}_{l}^{(3)}\subseteq\bigcup_{n,\bs{j}'}\ms{V}'_{n,\bs{j}'}(A)$.
Notice that $|n|<|a_{3}|^{2}+\cdots+|a_{l}|^{2}+\frac{l}{2}+1<l(\prod_{k=3}^{l}j_{k}^{*})^{2}$. Then by \eqref{form441}, one has
 \begin{align}
\label{form446}
|\ms{V}_{l}^{(3)}|&\leq\sum_{n,j_{3},\cdots,j_{l}}|\ms{V}'_{n,\bs{j}'}(A')|
\\\notag&\leq\sum_{n,j_{3},\cdots,j_{l}}2(j_{l}^{*})^{m}\frac{2}{A^{m}}
\\\notag&\leq8l\sum_{j_{3},\cdots,j_{l}}(\prod_{k=3}^{l}j_{k}^{*})^{m+2-\frac{\tau m}{3(m+2d)}}\gamma^{\frac{m}{m+2d}}.
\end{align}
Then by \eqref{form446}, the fact $\tau\geq3(m+2d)(m+d+3)/m$ and Lemma \ref{le1} in Appendix, one has
\begin{align}
\label{form447}
|\ms{V}_{l}^{(3)}|
\leq8l(2\times3^{d})^{l-2}\gamma^{\frac{m}{m+2d}}.
\end{align}
\indent To sum up, by estimates \eqref{form442}, \eqref{form443} and \eqref{form447}, one has
\begin{align}
\label{form448}
|\bigcup_{\bs{j}}\ms{V}_{\bs{j}}(\gamma,\tau)|\leq&|\ms{V}_{l}^{(1)}|+|\ms{V}_{l}^{(2)}|+|\ms{V}_{l}^{(3)}|
\\\notag\leq&(8(12l)^{d}+8(4l)^{d}+8l)(2\times3^{d})^{l-2}\gamma^{\frac{m}{m+2d}}
\\\notag\leq&\rho^{l}\gamma^{\frac{m}{m+2d}}
\end{align}
with $\rho=2^{d+4}3^{d}$. In view of $m>d/2$, hence when $\tau\geq15(m+2d+1)$, one has
\begin{equation}
\label{form449}
|\bigcup_{l=3}^{r+2}\bigcup_{\bs{j}\in(\mb{U}_{2}\times\mb{Z}^{d})^{l}\backslash\mc{I}_{l}}\ms{V}_{\bs{j}}(\gamma,\tau)|\leq\sum_{l=3}^{r+2}\rho^{l}\gamma^{\frac{m}{m+2d}}\leq\rho^{r+3}\gamma^{\frac{m}{m+2d}}.
\end{equation}
The proof is completed.
\end{proof}
%%%
\begin{proof}[Proof of Theorem {\sl\ref{th42}}]
By Lemma \ref{le43}, there exists $R'>0$ such that for any $N\geq0$, there exists $C_{N}>0$ such that $R\leq R'$, one has $\langle|P|\rangle_{0,N}^{R}\leq C_{N}R$. This means that the condition \eqref{form211} in Theorem \ref{th21} holds with $s_0=0$.
Assuming $\gamma<5\rho r^{4}C_{N}R'$, then $R_{*}=\frac{\gamma}{10\rho r^{4}C_{N}}$ in \eqref{form213}. Then for any $s\geq r\tau+\frac{d+1}{2}$ and $N=s+1$, one has $\varepsilon_*=\frac{3\gamma}{200\rho^{2}r^4C_{N}}$ in \eqref{form219}. Assuming $\gamma<\rho^{-\frac{(m+2d)(r+3)}{m}}$, then by Lemma \ref{le44} and Corollary \ref{co21}, we can get Theorem \ref{th42} with $c_{1}=\frac{(4\varepsilon_{*})^{r}}{5\rho^{2}C_{N}}$ and $c_{2}=1+\frac{200\rho^2C_{N}}{\gamma}$.
\end{proof}
%%%
Finally, we apply Theorem \ref{th42} to prove \Cref{co42} and \Cref{re41}.
\begin{proof}[Proof of Corollary {\sl\ref{co42}}]
Take $r=[\frac{\log_{\rho}\frac{1}{\varepsilon}}{12\tau\log_{\rho}\log_{\rho}\frac{1}{\varepsilon}}]-3$, $\gamma=\rho^{-\frac{\log_{\rho}\frac{1}{\varepsilon}}{2\tau\log_{\rho}\log_{\rho}\frac{1}{\varepsilon}}}$. Then
$$s=\frac{\log_{\rho}\frac{1}{\varepsilon}}{12\log_{\rho}\log_{\rho}\frac{1}{\varepsilon}}\geq r\tau+\frac{d+1}{2}$$
and for any $m>\frac{d}{2}$, one has
$$|\ms{V}'|\leq\rho^{r+3}\gamma^{\frac{m}{m+2d}}\leq\rho^{-\frac{\log_{\rho}\frac{1}{\varepsilon}}{60\tau\log_{\rho}\log_{\rho}\frac{1}{\varepsilon}}}.$$
By \eqref{form437} with $N=s+1$ in Lemma \ref{le43}, there exist the constants $\mu,C'>0$ such that
\begin{equation}
 \label{form450}
 c_{1}\varepsilon^{-r}=\tilde{c}_{1}(\frac{e\mu}{N+2})^{(N+2)(r+1)}(\frac{\tilde{c}_{0}\gamma }{r^{4}})^{r}\varepsilon^{-r},
\end{equation}
\begin{equation}
\label{form451}
c_{2}\varepsilon^{3}=\big(1+\frac{\tilde{c}_2}{\gamma}(\frac{N+2}{e\mu})^{N+2}\big)\varepsilon^{3}
 \end{equation}
 with $\tilde{c}_{0}=\frac{3}{50\rho^{2}C'}$, $\tilde{c}_1=\frac{1}{5\rho^{2}C'}$ and $\tilde{c}_2=200\rho^{2}C'$.
Notice that for any small enough $\varepsilon$, one has
 \begin{equation}
  \label{form452}
 (N+2)(r+1)\log_{\rho}\frac{N+2}{e\mu}\leq\frac{1}{6}r\log_{\rho}\frac{1}{\varepsilon},
 \end{equation}
 \begin{equation}
 \label{form453}
 \log_{\rho}\frac{r^{4}}{\tilde{c}_0\gamma}\leq\frac{1}{6}\log_{\rho}\frac{1}{\varepsilon}.
 \end{equation}
By \eqref{form450}, \eqref{form452} and \eqref{form453}, one has
\begin{align*}
c_{1}\varepsilon^{-r}&\geq\tilde{c}_{1}\varepsilon^{-\frac{2}{3}r}\geq\varepsilon^{-\frac{\log_{\rho}\frac{1}{\varepsilon}}{24\tau\log_{\rho}\log_{\rho}\frac{1}{\varepsilon}}}=\rho^{\frac{(\log_{\rho}\frac{1}{\varepsilon})^{2}}{24\tau\log_{\rho}\log_{\rho}\frac{1}{\varepsilon}}}.
\end{align*}
Similarly, by \eqref{form451}, one has
\begin{align*}
c_{2}\varepsilon^{3}\leq\varepsilon^{\frac{5}{2}}.
\end{align*}
Hence, we can draw the conclusion.
\end{proof}
%%%

%%
\begin{proof}[Proof of Corollary {\sl\ref{re41}}]
By Lemma \ref{le43} and the definition of norm \eqref{form27}, the estimate \eqref{form436} holds with the constant $C_N$ independent of $N$.
Take $r=[\frac{1}{30}\log_{\rho}\frac{1}{\varepsilon}]-3$, $\gamma=\varepsilon^{\frac{1}{3}}=\rho^{-\frac{1}{3}\log_{\rho}\frac{1}{\varepsilon}}$. Then thus $s\geq\frac{\tau}{9}\log_{\rho}\frac{1}{\varepsilon}\geq r\tau+\frac{d+1}{2}$, $|\ms{V}'|\leq\rho^{r+3}\gamma^{\frac{m}{m+2d}}\leq\varepsilon^{\frac{1}{30}}$,
\begin{align*}
c_{1}\varepsilon^{-r}=&\frac{1}{5\rho^{2}C_N}(\frac{3}{50\rho^{2}C_N}\frac{\gamma}{r^{4}})^{r}\varepsilon^{-r}
\\\geq&\frac{1}{5\rho^{2}C_N}(\frac{3}{50\rho^{2}C_Nr^{4}})^{r}\varepsilon^{-\frac{2}{3}r}
\\\geq&\rho^{\frac{1}{46}(\log_{\rho}\frac{1}{\varepsilon})^{2}}.
\end{align*}
and
$c_{2}\varepsilon^{3}=(1+\frac{200\rho^2C_{N}}{\gamma})\varepsilon^{3}\leq\varepsilon^{\frac{5}{2}}.$
\end{proof}

%=========================================================
\appendix
\setcounter{lemma}{0}
\setcounter{equation}{0}
\renewcommand{\thelemma}{\arabic{lemma}}
\renewcommand{\theequation}{\arabic{equation}}
\section*{Appendix}
%%%%%
\begin{lemma}
\label{le1}
One has
\begin{equation}
\label{form01}
\sum_{j\in\mb{U}_{2}\times\mb{Z}^{d}}\frac{1}{\langle j\rangle^{d+1}}=2\sum_{a\in\mb{Z}^{d}}\frac{1}{\langle a\rangle^{d+1}}<2\times3^{d}.
\end{equation}
\end{lemma}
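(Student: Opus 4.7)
The plan is to handle the equality and the strict inequality separately. The equality $\sum_{j\in\mb{U}_{2}\times\mb{Z}^{d}}\langle j\rangle^{-(d+1)}=2\sum_{a\in\mb{Z}^{d}}\langle a\rangle^{-(d+1)}$ is immediate from the definition $\mb{U}_2=\{\pm 1\}$ together with the fact that $\langle(\delta,a)\rangle=\langle a\rangle$ does not depend on $\delta$, so the sum over $j$ decomposes as two identical copies of the sum over $a$.

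For the strict inequality $\sum_{a\in\mb{Z}^d}\langle a\rangle^{-(d+1)}<3^{d}$, I would partition $\mb{Z}^{d}$ by $\ell^{\infty}$-shells $S_{n}:=\{a\in\mb{Z}^{d}:\max_{i}|a_{i}|=n\}$, which have $|S_{0}|=1$ and $|S_{n}|=(2n+1)^{d}-(2n-1)^{d}$ for $n\geq 1$. Since $|a|\geq\max_{i}|a_{i}|=n$ on $S_{n}$, the whole sum is bounded by
\begin{equation*}
\sum_{a\in\mb{Z}^{d}}\frac{1}{\langle a\rangle^{d+1}}\leq 1+\sum_{n=1}^{\infty}\frac{(2n+1)^{d}-(2n-1)^{d}}{(n+1)^{d+1}}.
\end{equation*}

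Next I would estimate the shell count with the mean-value bound $(2n+1)^{d}-(2n-1)^{d}\leq 2d(2n+1)^{d-1}$ and then the elementary inequality $2n+1\leq 2(n+1)$, obtaining $(2n+1)^{d}-(2n-1)^{d}\leq d\cdot 2^{d}(n+1)^{d-1}$ for $n\geq 1$. Substituting yields
\begin{equation*}
\sum_{a\in\mb{Z}^{d}}\frac{1}{\langle a\rangle^{d+1}}\leq 1+d\cdot 2^{d}\sum_{n=1}^{\infty}\frac{1}{(n+1)^{2}}=1+d\cdot 2^{d}\Bigl(\frac{\pi^{2}}{6}-1\Bigr).
\end{equation*}

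Finally, since $\pi^{2}/6-1<2/3$, it suffices to verify the closed-form inequality $1+\tfrac{2}{3}d\cdot 2^{d}<3^{d}$ for every $d\geq 1$. This can be checked directly at $d=1,2$ and then extended by induction: the ratio of the right side to the left grows geometrically because $3^{d+1}/3^{d}=3$ dominates the factor $2(d+1)/d$ by which the left side is multiplied at each step. The only ``obstacle'' is bookkeeping of these numerical constants; there is no analytic difficulty.
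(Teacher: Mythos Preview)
Your proof is correct and follows essentially the same route as the paper: the paper also partitions $\mb{Z}^d$ into $\ell^\infty$-shells, uses the same mean-value bound $(2n+1)^d-(2n-1)^d\le 2d(2n+1)^{d-1}$ and $2n+1\le 2(n+1)$, and arrives at the identical expression $1+2^{d}d(\pi^{2}/6-1)<3^{d}$. The only difference is that you spell out the verification of this last numerical inequality, whereas the paper simply asserts it.
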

%%%
\begin{proof}
Let $a=(a_{1},\cdots,a_{d})$ and $|a|_{\infty}=\max\{|a_{1}|,\cdots, |a_{d}|\}$. Then
\begin{align*}
\label{form02}
\sum_{a\in\mb{Z}^{d}}\frac{1}{\langle a\rangle^{d+1}}
&\leq\sum_{a\in\mb{Z}^{d}}\frac{1}{(1+|a|_{\infty})^{d+1}}
\\&=1+\sum_{n\geq1}\frac{(2n+1)^{d}-(2n-1)^{d}}{(1+n)^{d+1}}
\\&\leq1+\sum_{n\geq1}\frac{2d(2n+1)^{d-1}}{(1+n)^{d+1}}
\\&\leq1+2^{d}d\sum_{n\geq1}\frac{1}{(1+n)^{2}}
\\&=1+2^{d}d(\frac{\pi^{2}}{6}-1)
\\&<3^{d}.
\end{align*}
\end{proof}
%%%%%
\begin{lemma}
\label{le2}
For any given $j\in\mb{U}_{2}\times\mb{Z}^{d}$, one has
\begin{equation}
\label{form02}
\sum_{\substack{j_{1},j_{2}\in\mb{U}_{2}\times\mb{Z}^{d}\\\mc{M}(j_{1},j_{2},j)=0}}\frac{\langle j\rangle^{d+1}}{\langle j_{1}\rangle^{d+1}\langle j_{2}\rangle^{d+1}}<2^{d+3}3^{d}.
\end{equation}
\end{lemma}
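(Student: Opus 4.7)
\bigskip
\noindent\textbf{Proof plan for Lemma \ref{le2}.}

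My plan is to first use the momentum constraint to eliminate one of the summation variables, and then use the triangle inequality together with Lemma \ref{le1}. Write $j=(\delta,a)$, $j_1=(\delta_1,a_1)$, $j_2=(\delta_2,a_2)$. The constraint $\mc{M}(j_1,j_2,j)=0$ reads $\delta_1 a_1+\delta_2 a_2+\delta a=0$, so for fixed $(\delta_1,a_1)$ and fixed $\delta_2\in\{\pm1\}$ the vector $a_2$ is uniquely determined by $a_2=-\delta_2(\delta_1 a_1+\delta a)$. In particular $\langle j_2\rangle=\langle a_2\rangle=1+|\delta_1 a_1+\delta a|$, which does not depend on $\delta_2$. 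Hence the sum on the left-hand side equals
\begin{equation*}
2\sum_{\delta_1\in\{\pm1\}}\sum_{a_1\in\mb{Z}^d}\frac{\langle a\rangle^{d+1}}{\langle a_1\rangle^{d+1}\langle \delta_1 a_1+\delta a\rangle^{d+1}},
\end{equation*}
the factor $2$ accounting for the free sum over $\delta_2$.

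Next I would handle the numerator $\langle a\rangle^{d+1}$. Setting $b:=\delta_1 a_1+\delta a$, the triangle inequality gives $|a|=|\delta_1(b-\delta_1 a_1)\cdot\delta^{-1}|\leq|a_1|+|b|$, hence $\langle a\rangle\leq\langle a_1\rangle+\langle b\rangle$. Convexity of $x\mapsto x^{d+1}$ then yields
\begin{equation*}
\langle a\rangle^{d+1}\leq 2^{d}\bigl(\langle a_1\rangle^{d+1}+\langle b\rangle^{d+1}\bigr),
\end{equation*}
and therefore
\begin{equation*}
\frac{\langle a\rangle^{d+1}}{\langle a_1\rangle^{d+1}\langle b\rangle^{d+1}}\leq 2^{d}\left(\frac{1}{\langle b\rangle^{d+1}}+\frac{1}{\langle a_1\rangle^{d+1}}\right).
\end{equation*}

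Finally, for each fixed $\delta_1$ the map $a_1\mapsto b=\delta_1 a_1+\delta a$ is a bijection of $\mb{Z}^d$, so summing the two terms separately over $a_1\in\mb{Z}^d$ both reduce, via Lemma \ref{le1}, to $\sum_{a\in\mb{Z}^d}\langle a\rangle^{-(d+1)}<3^d$. Collecting all factors gives the bound
\begin{equation*}
2\cdot 2\cdot 2^{d}\cdot 2\cdot 3^{d}=2^{d+3}3^{d},
\end{equation*}
with strict inequality coming from Lemma \ref{le1}, as desired. There is no real obstacle here; the only point to watch is the change of variables $a_1\leftrightarrow b$ for each fixed $\delta_1$ so that Lemma \ref{le1} applies on the nose.
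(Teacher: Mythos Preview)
Your proof is correct and follows essentially the same approach as the paper: apply the triangle inequality $\langle j\rangle\leq\langle j_{1}\rangle+\langle j_{2}\rangle$, use convexity to get the factor $2^{d}$, then reduce to Lemma~\ref{le1} after noting that the momentum constraint fixes $j_{2}$ up to two sign choices. The only difference is cosmetic---the paper keeps the $j$-notation throughout rather than unpacking into $(\delta,a)$, and uses the $j_{1}\leftrightarrow j_{2}$ symmetry in place of your explicit change of variables $a_{1}\mapsto b$.
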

%%%
\begin{proof}
By $\langle j\rangle\leq\langle j_{1}\rangle+\langle j_{2}\rangle$ and Lemma \ref{le1}, one has
\begin{align*}
\sum_{\substack{j_{1},j_{2}\in\mb{U}_{2}\times\mb{Z}^{d}\\\mc{M}(j_{1},j_{2},j)=0}}\frac{\langle j\rangle^{d+1}}{\langle j_{1}\rangle^{d+1}\langle j_{2}\rangle^{d+1}}
\leq&2^{d}\sum_{\substack{j_{1},j_{2}\in\mb{U}_{2}\times\mb{Z}^{d}\\\mc{M}(j_{1},j_{2},j)=0}}(\frac{1}{\langle j_{1}\rangle^{d+1}}+\frac{1}{\langle j_{2}\rangle^{d+1}})
\\=&2^{d+2}\sum_{j_{1}\in\mb{U}_{2}\times\mb{Z}^{d}}\frac{1}{\langle j_{1}\rangle^{d+1}}
\\<&2^{d+3}3^{d}.
\end{align*}
\end{proof}
%%%%%
\begin{lemma}
\label{le3}
For any $\bs{j}=(\delta_{k},a_{k})_{k=1}^{l}\in(\mb{U}_{2}\times\mb{Z}^{d})^{l}$, one has
\begin{equation}
\label{form03}
j_{1}^{*}\leq\langle\mc{M}(\bs{j})\rangle j_{2}^{*}(\prod_{k=3}^{l}j_{k}^{*})^{\frac{2}{3}}.
\end{equation}
\end{lemma}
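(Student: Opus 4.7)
The plan is to reduce the lemma to a one-variable elementary inequality via a concavity argument. By the symmetry of the decreasing rearrangement under permutations of $\bs{j}$, I may assume $j_k^\ast = \langle j_k\rangle$ for every $k$, i.e.\ the sequence $(\langle j_1\rangle,\dots,\langle j_l\rangle)$ is already decreasing. Writing $M = \langle\mc{M}(\bs{j})\rangle$, $A = j_2^\ast$, and $Y = \prod_{k=3}^{l} j_k^\ast$ (empty product $=1$), the triangle inequality $|a_1|\leq |\mc{M}(\bs{j})|+\sum_{k\geq 2}|a_k|$ gives
\[
j_1^\ast \leq M + (A-1) + \sum_{k=3}^{l}(j_k^\ast - 1).
\]
The cases $l = 2$ and $A = 1$ are trivial (both force $Y = 1$, and the inequality collapses to $j_1^\ast\leq M + A - 1\leq MA$, using $(M-1)(A-1)\geq 0$), so from now on I assume $l\geq 3$ and $A\geq 2$.

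To control the tail sum, I will exploit the concavity of $\log$ on $[1, A]$: since $j_k^\ast\in[1, A]$ for all $k\geq 3$, the chord from $(1,0)$ to $(A,\log A)$ lies below the graph, giving $(j_k^\ast - 1)\log A \leq (A-1)\log j_k^\ast$; summing over $k = 3, \dots, l$ and recalling $\sum_{k\geq 3}\log j_k^\ast = \log Y$ yields
\[
\sum_{k=3}^{l}(j_k^\ast - 1) \leq (A-1)\log_A Y,
\]
and therefore $j_1^\ast \leq M + (A-1)(1+\log_A Y)$. Combining with $M+(A-1)\leq MA$ and dividing by a factor $M\geq 1$, the lemma is reduced to the scalar inequality $(A-1)\log_A Y \leq A(Y^{2/3} - 1)$ for $A\geq 2$ and $Y\geq 1$.

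Both sides vanish at $Y = 1$, and for $Y\geq 2$ the inequality is a one-variable calculus check in which the right-hand side grows like $AY^{2/3}$ while the left-hand side grows only like $\log Y$; the tightest corner is near $(A,Y) = (2,2)$, where $A(Y^{2/3}-1)\approx 1.17$ while $(A-1)\log_A Y = 1$, so it still holds. The main and essentially only obstacle is subtle: on a small interval $Y\in(1, Y_0)$ with $Y_0\approx 1.27$ and $A = 2$, the \emph{continuous} inequality actually fails, so one must use the discreteness of the Fourier indices, namely that $a_k\in\mb{Z}^d$ forces $|a_k|\in\{0\}\cup[1,\infty)$ and hence $j_k^\ast\in\{1\}\cup[2,\infty)$, so $Y\in\{1\}\cup[2,\infty)$. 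This rules out the bad range and makes the reduction valid.
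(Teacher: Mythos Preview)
Your reduction is sound and the conclusion is correct, but the route is quite different from the paper's. The paper never introduces $\log$: it proves the elementary two-variable inequality $1+x+y\le(1+x)(1+y)^{2/3}$ for $x\ge1$ and $0\le y\le x$, and then, after ordering $|a_1|\ge\cdots\ge|a_l|$, applies it iteratively to $1+|\mc M(\bs j)|+|a_2|+\cdots+|a_k|$ with $y=|a_k|$ and $x$ the remaining sum, peeling off one factor $(1+|a_k|)^{2/3}$ at a time for $k=l,l-1,\dots,3$; the final step $1+|\mc M|+|a_2|\le(1+|\mc M|)(1+|a_2|)$ is trivial. Both arguments ultimately use the discreteness $|a_k|\in\{0\}\cup[1,\infty)$ (the paper needs it to guarantee $x\ge1$; you need it to force $Y\in\{1\}\cup[2,\infty)$), so your diagnosis that the continuous inequality fails near $Y=1$ and that the lattice structure saves the day is exactly right. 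The paper's scheme is shorter because its one inequality is verified once and then reused; your global concavity bound trades that for a single scalar inequality but then has to analyse it over a two-parameter region.

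That last step is where your write-up has a real gap: a spot-check at $(A,Y)=(2,2)$ plus an asymptotic remark is not a proof that $(A-1)\log_A Y\le A(Y^{2/3}-1)$ holds for all $A\ge2$, $Y\ge2$. A clean way to close it is to factor
\[
\frac{A(Y^{2/3}-1)}{(A-1)\log_A Y}=\frac{A\ln A}{A-1}\cdot\frac{Y^{2/3}-1}{\ln Y},
\]
observe that $A\mapsto\tfrac{A\ln A}{A-1}$ is increasing on $(1,\infty)$ (the derivative has numerator $A-1-\ln A>0$) and that $t\mapsto\tfrac{e^{2t/3}-1}{t}$ is increasing on $(0,\infty)$ (write $\phi(t)=(\tfrac{2}{3}t-1)e^{2t/3}+1$ and note $\phi(0)=0$, $\phi'(t)=\tfrac{4}{9}te^{2t/3}>0$), so the product is minimised at $(A,Y)=(2,2)$ where it equals $2(2^{2/3}-1)\approx1.17>1$.
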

%%%
\begin{proof}
By direct calculation, for any real numbers $x\geq1$ and $0\leq y\leq x$, one has
\begin{equation}
\label{form04}
1+x+y\leq(1+x)(1+y)^{\frac{2}{3}}.
\end{equation}
Assuming $|a_{1}|\geq|a_{2}|\geq\cdots\geq|a_{l}|$ and $|a_{1}|\geq1$ without loss of generality, then by induction, one has
\begin{align*}
j_{1}^{*}=&1+|a_{1}|
\\\leq&1+|\mc{M}(\bs{j})|+|a_{2}|+\cdots+|a_{l}|
\\\leq&(1+|\mc{M}(\bs{j})|+|a_{2}|+\cdots+|a_{l-1}|)(1+|a_{l}|)^{\frac{2}{3}}
\\\leq&(1+|\mc{M}(\bs{j})|+|a_{2}|)(1+|a_{3}|)^{\frac{2}{3}}\cdots(1+|a_{l}|)^{\frac{2}{3}}
\\\leq&(1+|\mc{M}(\bs{j})|)(1+|a_{2}|)(1+|a_{3}|)^{\frac{2}{3}}\cdots(1+|a_{l}|)^{\frac{2}{3}}
\\=&\langle\mc{M}(\bs{j})\rangle j_{2}^{*}(\prod_{k=3}^{l}j_{k}^{*})^{\frac{2}{3}}.
\end{align*}
Hence, we can draw the conclusion.
\end{proof}
%%%%%
\begin{lemma}
\label{le4}
For $\omega_{a}=\sqrt{a^{2}+m}$, $a\in\mb{N}$, $m\in[1,2]$, one has the following estimates:
\begin{equation}
\label{form05}
\omega_0^{-2}-\omega_1^{-2}\geq\frac{1}{6}=\frac{\sqrt{2}}{3}\langle 0\rangle^{-\frac{3}{2}}\langle 1\rangle^{-\frac{3}{2}},
\end{equation}
\begin{equation}
\label{form06}
\omega_0^{-2}-\omega_a^{-2}\geq\frac{1}{3}>\langle 0\rangle^{-\frac{3}{2}}\langle a\rangle^{-\frac{3}{2}}, \quad\text{for any}\; a\geq2,
\end{equation}
\begin{equation}
\label{form07}
\omega_b^{-2}-\omega_a^{-2}>\langle b\rangle^{-3}>\langle b\rangle^{-\frac{3}{2}}\langle a\rangle^{-\frac{3}{2}}, \quad\text{for any}\; a>b\geq1.
\end{equation}
\end{lemma}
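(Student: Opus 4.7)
The plan is to prove the three inequalities by direct computation, exploiting monotonicity in the parameter $m$ and in the index $a$, so as to reduce each case to a single extremal configuration.

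For the first estimate, I would start by writing $\omega_0^{-2}-\omega_1^{-2}=\frac{1}{m}-\frac{1}{m+1}=\frac{1}{m(m+1)}$, note that this is a decreasing function of $m$ on $[1,2]$, and read off the minimum $\tfrac{1}{6}$ at $m=2$. The right-hand equality $\frac{\sqrt{2}}{3}\langle 0\rangle^{-3/2}\langle 1\rangle^{-3/2}=\frac{\sqrt{2}}{3}\cdot 2^{-3/2}=\frac{1}{6}$ is then a one-line computation.

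For the second estimate, I would rewrite $\omega_0^{-2}-\omega_a^{-2}=\frac{a^{2}}{m(a^{2}+m)}$ and verify that this expression is decreasing in $m\in[1,2]$ (by differentiating) and increasing in $a\geq 2$, so that the minimum over $(m,a)\in[1,2]\times\{2,3,\dots\}$ is attained at $(m,a)=(2,2)$ and equals $\tfrac{4}{12}=\tfrac{1}{3}$. The bound $\langle 0\rangle^{-3/2}\langle a\rangle^{-3/2}=(1+a)^{-3/2}\leq 3^{-3/2}<\tfrac{1}{3}$ for $a\geq 2$ finishes the claim.

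For the third estimate, I would observe that for fixed $b\geq 1$ the quantity $\omega_b^{-2}-\omega_a^{-2}$ is strictly increasing in $a$, so it suffices to handle $a=b+1$. Writing $\omega_b^{-2}-\omega_{b+1}^{-2}=\frac{2b+1}{(b^{2}+m)((b+1)^{2}+m)}$ and noting that the right side is decreasing in $m$, it is enough to verify the inequality at $m=2$, i.e.\
\[
(2b+1)(b+1)^{3}>(b^{2}+2)\bigl((b+1)^{2}+2\bigr)\quad\text{for all integers }b\geq 1.
\]
Expanding both sides reduces this to $b^{4}+5b^{3}+4b^{2}+b-5>0$, which is immediate for $b\geq 1$. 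The remaining inequality $\langle b\rangle^{-3}>\langle b\rangle^{-3/2}\langle a\rangle^{-3/2}$ is equivalent to $\langle a\rangle>\langle b\rangle$ and follows from the hypothesis $a>b$. I expect no real obstacle here; the only mild subtlety is confirming that the extremal configuration for the $m$-dependence is indeed $m=2$ in all three parts, which a short sign-of-derivative computation settles.
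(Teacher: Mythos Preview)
Your proposal is correct and follows exactly the approach the paper takes: the paper's entire proof is the single sentence ``These estimates follow from direct calculation,'' and what you have written is precisely that direct calculation carried out in detail, with the extremal cases in $m$ and $a$ correctly identified and the polynomial inequality $(2b+1)(b+1)^3>(b^2+2)((b+1)^2+2)$ verified.
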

%%%
\begin{proof}
These estimates follow from direct calculation.
\end{proof}
%%%%%
\begin{lemma}
\label{le5}
Let $u^{(1)},\dots,u^{(K)}$ be $K$ independent vectors with $\|u^{(i)}\|_{\ell^{1}}\leq1$. Let $w\in\mb{R}^{K}$ be an arbitrary vector, then there exists $i_{0}\in\{1,\dots,K\}$ such that
\begin{equation}
\label{form08}
|w\cdot u^{(i_{0})}|{\color{blue}\geq}\frac{\|w\|_{\ell^{1}}|\det(u)|}{K^{\frac{3}{2}}},
\end{equation}
where $\|w\|_{\ell^{1}}=\sum_{k=1}^{K}|w_{k}|$ and $\det(u)$ is the determinant of the matrix formed by the components of the vectors $u^{(1)},\cdots,u^{(K)}$.
\end{lemma}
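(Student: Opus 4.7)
The plan is to combine Cramer's rule with the Cauchy--Binet formula and Hadamard's inequality. Let $U$ denote the $K\times K$ matrix whose $i$-th row is $u^{(i)}$, so that the vector $y:=Uw$ has components $y_i=w\cdot u^{(i)}$, and set $M:=\max_i|y_i|$. The inequality is vacuous when $\det(u)=0$, and otherwise the conclusion is equivalent to
\[
\|w\|_{\ell^1}\,|\det U|\leq M\cdot K^{3/2}.
\]

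First I would invert the system column by column by Cramer's rule: $w_k\det U=\det\bigl(U^{(k\leftarrow y)}\bigr)$, where $U^{(k\leftarrow y)}$ is $U$ with the $k$-th column replaced by $y$. Expanding this determinant along the replaced column and taking absolute values gives
\[
|w_k|\,|\det U|\leq M\sum_{i=1}^{K}|M_{ik}|,
\]
where $M_{ik}$ is the $(K{-}1)\times(K{-}1)$ minor of $U$ obtained by deleting row $i$ and column $k$. Summing over $k$ reduces the problem to proving $\sum_{i,k}|M_{ik}|\leq K^{3/2}$.

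Next, I would fix $i$ and apply Cauchy--Schwarz, $\sum_k|M_{ik}|\leq\sqrt{K}\,\bigl(\sum_k M_{ik}^2\bigr)^{1/2}$, followed by the Cauchy--Binet identity applied to the $(K{-}1)\times K$ matrix $\widetilde U^{[i]}$ obtained from $U$ by deleting row $i$: this yields $\sum_k M_{ik}^2=\det\bigl(\widetilde U^{[i]}(\widetilde U^{[i]})^T\bigr)$, which is the Gram determinant of the $K{-}1$ row vectors $\{u^{(l)}:l\neq i\}$. Hadamard's inequality for positive semidefinite matrices, together with $\|u^{(l)}\|_{\ell^2}\leq\|u^{(l)}\|_{\ell^1}\leq 1$, then gives
\[
\det\bigl(\widetilde U^{[i]}(\widetilde U^{[i]})^T\bigr)\leq\prod_{l\neq i}\|u^{(l)}\|_{\ell^2}^2\leq 1.
\]
Hence $\sum_k|M_{ik}|\leq\sqrt{K}$, and summing over the $K$ choices of $i$ delivers $\sum_{i,k}|M_{ik}|\leq K^{3/2}$, which completes the argument.

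I do not foresee a substantial obstacle. The only delicate choice is to apply Cauchy--Binet in the correct orientation---summing the squared minors over the deleted column with the deleted row fixed---because this presents the sum as the Gram determinant of the original vectors $u^{(l)}$, where the $\ell^1$ hypothesis enters cleanly via $\|u^{(l)}\|_{\ell^2}\leq\|u^{(l)}\|_{\ell^1}\leq 1$. The opposite orientation (fixed column, summing over rows) would produce the Gram matrix of the columns of $U$, for which only a weaker bound via AM--GM on $\|U\|_F^2\leq K$ is available, and would degrade the exponent from $K^{3/2}$ to $K^2$.
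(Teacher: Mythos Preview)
Your argument is correct. The paper does not actually supply its own proof of this lemma: it simply records ``It is Lemma~6.9 of~\cite{B03}'' and moves on. Your write-up therefore does more than the paper does here, giving a complete self-contained derivation.

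Since there is no in-paper proof to compare line by line, let me just confirm the key steps. Cramer's rule applied to $Uw=y$ gives $w_k\det U=\det(U^{(k\leftarrow y)})$; expanding along the replaced column yields $|w_k|\,|\det U|\le M\sum_i|M_{ik}|$ exactly as you wrote, and the minors appearing are indeed those of the original matrix $U$ because column~$k$ has been deleted. The Cauchy--Binet identity for the $(K{-}1)\times K$ matrix $\widetilde U^{[i]}$ is applied in the right direction: the sum over the $K$ maximal minors (one for each deleted column) equals $\det\bigl(\widetilde U^{[i]}(\widetilde U^{[i]})^{T}\bigr)$, the Gram determinant of the row vectors $\{u^{(l)}:l\neq i\}$, and Hadamard together with $\|u^{(l)}\|_{\ell^2}\le\|u^{(l)}\|_{\ell^1}\le1$ bounds it by~$1$. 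The final count $\sum_{i,k}|M_{ik}|\le K\cdot\sqrt{K}=K^{3/2}$ is then immediate. Your closing remark about the orientation of Cauchy--Binet is well taken and pinpoints exactly where the $\ell^1$ hypothesis on the $u^{(i)}$ is used.
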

%%%
\begin{proof}
It is Lemma 6.9 of \cite{B03}.
\end{proof}
%%%%%
\begin{lemma}
\label{le6}
Let $h$ is $l+1$ times differentiable on an interval $\mc{P}\subseteq\mb{R}$ and $\mc{P}_{X}=\left\{x\in\mc{P}\mid|h(x)|<X\right\}$. Assume that
\\(i) for any $x\in\mc{P}$, there exists $i_{0}\leq l$ such that $h^{(i_{0})}(x)>A$,
\\(ii) there exists $B$ such that for any $x\in\mc{P}$ and $1\leq i\leq l+1$, one has $|h^{(i)}(x)|\leq B$. Then
\begin{equation}
\label{form09}
\frac{|\mc{P}_{X}|}{|\mc{P}|}\leq\frac{B}{A}2(2+3+\cdots+l+\frac{2}{A})X^{\frac{1}{l}}.
\end{equation}
\end{lemma}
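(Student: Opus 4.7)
The plan is to prove the estimate by induction on the order $l$, following a Rüssmann--Pyartli sublevel-set argument.

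For the base case $l = 1$, hypothesis (i) reduces to $h'(x) > A$ on all of $\mathcal{P}$, so $h$ is strictly monotone. Consequently $\mathcal{P}_X$ is a single subinterval of $\mathcal{P}$, and since $h$ increases by at least $A\,|\mathcal{P}_X|$ on $\mathcal{P}_X$ while its image lies in $(-X,X)$, we get $|\mathcal{P}_X| \leq 2X/A$. Combined with the trivial inclusion $\mathcal{P}_X \subseteq \mathcal{P}$ and the constraint from (ii), this delivers the claimed ratio bound (where the $2/A$ term in the stated prefactor accounts exactly for this base estimate).

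For the inductive step, assuming the lemma has been established up to order $l-1$, I would decompose
\[
\mathcal{E} = \{x \in \mathcal{P} : h^{(l)}(x) > A\}, \qquad \mathcal{E}^c = \mathcal{P} \setminus \mathcal{E}.
\]
The set $\mathcal{E}$ is relatively open in $\mathcal{P}$ by continuity of $h^{(l)}$ (guaranteed by (ii)), so it is a countable union of disjoint open intervals. On each connected component $(a,b) \subseteq \mathcal{E}$, the derivative $h^{(l-1)}$ is strictly monotone with $|{h^{(l-1)}}'| > A$, and since $|h^{(l-1)}| \leq B$ by (ii), each such component has length at most $2B/A$. On a single such interval, the classical one-variable Rüssmann--Pyartli estimate yields $|\{x \in (a,b) : |h(x)| < X\}| \leq C\,(X/A)^{1/l}$; summing over the components and using that their total length is at most $|\mathcal{P}|$, the $\mathcal{E}$-contribution gives a term carrying a new $l/A$ summand.

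On $\mathcal{E}^c$, hypothesis (i) guarantees that for every point, some derivative of order $\leq l-1$ already exceeds $A$, so the inductive hypothesis applies component-by-component (again $\mathcal{E}^c$ splits into intervals) and contributes the factor $(B/A)\cdot 2(2 + 3 + \cdots + (l-1) + 2/A)\,X^{1/(l-1)}$. Assuming $X$ is in the relevant small regime (else the bound is trivial) so that $X^{1/(l-1)} \leq X^{1/l}$, and adding the $\mathcal{E}$-contribution produces the telescoping sum $2 + 3 + \cdots + l + 2/A$ precisely as required.

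The main obstacle will be the bookkeeping of constants so that contributions from different components combine additively rather than multiplicatively. The key point is that although $\mathcal{E}$ and $\mathcal{E}^c$ each can have infinitely many components sharing boundary points, the length of each component of $\mathcal{E}$ is controlled by $2B/A$, so their total length is dominated by $|\mathcal{P}|$, and the inductive ratio bound on each component of $\mathcal{E}^c$ is converted to an absolute bound by multiplying by its length and summing. This ensures the ``sum'' structure $2+3+\cdots+l$ emerges from the induction rather than a product or an exponential dependence in $l$.
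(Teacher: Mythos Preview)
The paper does not prove this lemma; its entire proof is the sentence ``It is Lemma~8.4 of \cite{B99}.'' Your R\"ussmann--Pyartli induction is the argument carried out in that reference, so you are reconstructing the cited proof rather than offering an alternative.

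One point to tighten: on $\mathcal{E}=\{h^{(l)}>A\}$ the Pyartli estimate you quote on each component is an \emph{absolute} bound $c_l(X/A)^{1/l}$, not a ratio bound, so summing it over the components of $\mathcal{E}$ does not converge if there are infinitely many; your final paragraph controls the \emph{length} of each component by $2B/A$ but not their \emph{number}, and the ratio conversion you describe works for $\mathcal{E}^c$ (via the inductive hypothesis) but not for $\mathcal{E}$. In the proof in \cite{B99} the induction descends through derivative orders with intermediate thresholds, so that at each stage the surviving set has a finite number of components inherited from monotonicity at the previous stage; this is what makes the constants add rather than blow up. In the paper's applications $h$ is real-analytic in $m$, so components are automatically finite, but for the lemma as stated you need the more careful descent.
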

%%%
\begin{proof}
It is Lemma 8.4 of \cite{B99}.
\end{proof}
%%%%%
%With the similar argument in the subsection 6.2 of \cite{B03}, we give the measure estimate of the resonant set $\ms{P}_{\bs{j}}(\gamma,\tau)$ defined in \eqref{form414}.
%%%%%
\begin{lemma}
\label{le7}
Assume $\gamma<1$. Then for any $\bs{j}\in(\mb{U}_{2}\times\mb{Z})^{l}\backslash\mc{J}_{l}$, one has
\begin{equation}
\label{form010}
|\ms{P}_{\bs{j}}(\gamma,\tau)|\leq2^{3l+6}l^{9}(j_{2}^{*})^{4}(j_{3}^{*}\cdots j_{l}^{*})^{3l+2-\frac{\tau}{3l}}\gamma^{\frac{1}{l}},
\end{equation}
where $\ms{P}_{\bs{j}}(\gamma,\tau)$ is defined in \eqref{form414}.
\end{lemma}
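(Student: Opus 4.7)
The plan is to view $h(m) := \delta_1 \omega_{a_1}(m) + \cdots + \delta_l \omega_{a_l}(m)$ as a real analytic function of $m \in [1,2]$ and apply the sublevel-set estimate of Lemma \ref{le6} to $\{m : |h(m)| < X\}$ with $X = \gamma(j_1^*/(\langle\mc{M}(\bs{j})\rangle j_2^*\cdots j_l^*))^\tau$. First I would group indices according to $|a_k|$, rewriting
\begin{equation*}
h(m) = \sum_{k=1}^{K'} w_k \, \omega_{b_k}(m),
\end{equation*}
where $b_1 < \cdots < b_{K'}$ are the distinct values of $|a_k|$ that appear and $w_k = \sum_{i:\,|a_i|=b_k}\delta_i$. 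The hypothesis $\bs{j}\notin\mc{J}_l$ ensures that after discarding any zero coefficients at least one nonzero integer $w_k$ remains, so $\|w\|_{\ell^1}\geq 1$, $|w_k|\leq l$, and $K' \leq l$.

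Next I would exploit the formula $\omega_a^{(k)}(m) = c_k (a^2+m)^{1/2-k}$ with explicit rational constants $c_k$. This immediately gives a uniform upper bound $|h^{(k)}(m)|\leq B$ for $1\leq k \leq l+1$ with $B$ of polynomial size in $l$. For the lower bound, the vector $(h, h', \ldots, h^{(K'-1)})^T(m)$ is the image of $w$ under a Vandermonde-type system in the points $\omega_{b_k}(m)^{-2}$. Lemma \ref{le4} bounds each Vandermonde difference $|\omega_{b_k}^{-2}-\omega_{b_{k'}}^{-2}|$ below by a multiple of $\langle b_k\rangle^{-3/2}\langle b_{k'}\rangle^{-3/2}$, so after normalizing the columns to unit $\ell^1$ norm (which costs only a product of $\omega_{b_k}$'s), Lemma \ref{le5} produces an index $i_0 \leq K'-1 \leq l-1$ for which $|h^{(i_0)}(m)|\geq A$ holds uniformly in $m$, with an explicit $A$.

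Feeding these $A$ and $B$ into Lemma \ref{le6} gives
\begin{equation*}
|\ms{P}_{\bs{j}}(\gamma,\tau)| \leq \frac{B}{A}\cdot 2\bigl(2+3+\cdots+l+\tfrac{2}{A}\bigr)\cdot X^{1/l}.
\end{equation*}
Lemma \ref{le3} in the form $j_1^*/(\langle\mc{M}(\bs{j})\rangle j_2^*\cdots j_l^*) \leq (j_3^*\cdots j_l^*)^{-1/3}$ then converts $X^{1/l}$ into $\gamma^{1/l}(j_3^*\cdots j_l^*)^{-\tau/(3l)}$, matching the $\gamma$-dependence in the claim. The remaining algebra collects the polynomial factors: the product $\prod_k \omega_{b_k}$, the Vandermonde lower bound $\prod_{k<k'}\langle b_k\rangle^{-3/2}\langle b_{k'}\rangle^{-3/2}$, the factor $K'^{3/2}\leq l^{3/2}$ from Lemma \ref{le5}, and the prefactor $2(2+3+\cdots+l) \leq l^2$ from Lemma \ref{le6} together yield the asserted $2^{3l+6}l^9$ and the structure $(j_2^*)^4(j_3^*\cdots j_l^*)^{3l+2}$.

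The main obstacle is the bookkeeping that produces exactly the exponent $4$ on $j_2^*$ and $3l+2$ on $j_3^*\cdots j_l^*$. Heuristically, $j_1^*$ has already been absorbed by Lemma \ref{le3}, so $j_2^*$ is the largest frequency remaining explicitly, and the Vandermonde lower bound contributes symmetrically in all the $\langle b_k \rangle$. A careful choice of the derivative index $i_0$ together with a separate handling of the possibly anomalously large factor $\omega_{b_1}^{-2i_0}$ (when $b_1$ is very small) is required so that the $\ell^1$-normalization of the Vandermonde columns, combined with the Vandermonde lower bound, concentrates at most four powers of $j_2^*$ and dumps the rest of the loss onto $j_3^*\cdots j_l^*$, where the gain $X^{1/l}$ more than compensates provided $\tau$ is large.
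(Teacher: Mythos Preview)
Your overall architecture matches the paper's proof exactly: group equal $|a_k|$'s, use the Vandermonde structure coming from $\omega_a^{(k)}=c_k\omega_a^{1-2k}$, invoke Lemma~\ref{le4} for the Vandermonde differences, Lemma~\ref{le5} for the existence of a good derivative, Lemma~\ref{le6} for the measure, and Lemma~\ref{le3} to convert $X^{1/l}$ into $(j_3^*\cdots j_l^*)^{-\tau/(3l)}\gamma^{1/l}$. That part is fine.

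The real gap is in your treatment of the Vandermonde lower bound. If you use the symmetric estimate $|\omega_{b_k}^{-2}-\omega_{b_{k'}}^{-2}|\gtrsim\langle b_k\rangle^{-3/2}\langle b_{k'}\rangle^{-3/2}$ for \emph{every} pair, then the resulting lower bound on $A$ contains a factor $\langle b_{K'}\rangle^{-3(K'-1)/2}$. But $\langle b_{K'}\rangle$ can equal $j_1^*$, which is not controlled by $j_2^*,\dots,j_l^*$ (in the regime $\delta_1\delta_2=-1$ it can be arbitrarily large), and Lemma~\ref{le3} does nothing to absorb positive powers of $j_1^*$ appearing in the prefactor $B/A$. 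The stated bound \eqref{form010} has no $j_1^*$ on the right for exactly this reason. Your final paragraph misdiagnoses the obstruction: the dangerous frequency is the \emph{largest} one $b_{K'}$, not the smallest $b_1$, and $i_0$ is produced by Lemma~\ref{le5}, not chosen.

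The paper removes $j_1^*$ by two coordinated moves. First, for every pair involving $a'_{K}$ or $a'_{K-1}$ it uses the sharper one-sided form of Lemma~\ref{le4}, namely $\omega_b^{-2}-\omega_a^{-2}\geq\langle b\rangle^{-3}$ for $a>b\geq 1$, so the Vandermonde lower bound involves only $\langle a'_1\rangle,\dots,\langle a'_{K-1}\rangle$ and never $\langle a'_K\rangle$. Second, the single factor $\omega_{a'_K}$ that appears in $|D|$ (from $\prod_k\omega_{a'_k}$) is cancelled against the normalization of the zeroth row, whose $\ell^1$ norm is $\sum_k\omega_{a'_k}\leq K\omega_{a'_K}$; the higher-order rows have $\ell^1$ norms bounded by absolute constants. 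After this cancellation the bound \eqref{form023} has the shape
\[
|h^{(i_0)}(m)|\;\gtrsim\;\Big(\prod_{k=1}^{K-2}\langle a'_k\rangle\Big)^{-(3K+1)/2}\langle a'_{K-1}\rangle^{-2},
\]
and since $\langle a'_{K-1}\rangle\leq j_2^*$ and $\prod_{k\leq K-2}\langle a'_k\rangle\leq\prod_{k\geq 3}j_k^*$, this is precisely what produces the exponents $4$ on $j_2^*$ and $3l+2$ on $j_3^*\cdots j_l^*$ after squaring in Lemma~\ref{le6}.
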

%%%
\begin{proof}
The process is similar to Lemma 6.8 and Lemma 6.12 in \cite{B03}. However, for the purpose to obtain concrete constants on the right side of  \eqref{form010}, we need give the details.
At first, for any integer $3\leq K\leq l$, consider indexes $0\leq a'_{1}<\cdots<a'_{K}$ and the determinant
$$D:=\begin{vmatrix}
\omega_{a'_{1}} & \omega_{a'_{2}} & \cdots & \omega_{a'_{K}}\\
\frac{d\omega_{a'_{1}}}{dm} & \frac{d\omega_{a'_{2}}}{dm} &  \cdots & \frac{d\omega_{a'_{K}}}{dm}\\
\vdots & \vdots &   & \vdots\\
\frac{d^{K-1}\omega_{a'_{1}}}{dm^{K-1}} & \frac{d^{K-1}\omega_{a'_{2}}}{dm^{K-1}} &  \cdots &
\frac{d^{K-1}\omega_{a'_{K}}}{dm^{K-1}}
\end{vmatrix}.$$
Notice that
\begin{equation}
\label{form011}
\frac{d\omega_{a}}{dm}=\frac{1}{2}\omega_{a}^{-1}\quad\text{and}\quad
\frac{d^{i}\omega_{a}}{dm^{i}}=\frac{(2i-3)!}{(-4)^{i-1}(i-2)!}\omega_{a}^{-2i+1}\;\text{for any}\;i\geq2.
\end{equation}
Hence, one has
\begin{equation}
\label{form012}
D=\frac{1}{2}\left(\prod_{k=2}^{K-1}\frac{(2k-3)!}{(-4)^{k-1}(k-2)!}\right)\left(\prod_{k=1}^{K}\omega_{a'_{k}}\right)
\begin{vmatrix}
1 & 1 & \cdots & 1\\
\omega_{a'_{1}}^{-2} & \omega_{a'_{2}}^{-2} & \cdots & \omega_{a'_{K}}^{-2}\\
\vdots & \vdots &   & \vdots\\
\omega_{a'_{1}}^{-2(K-1)} & \omega_{a'_{2}}^{-2(K-1)} & \cdots & \omega_{a'_{K}}^{-2(K-1)}
\end{vmatrix},
\end{equation}
where the last Vandermonde determinant about $\omega_{n'_{k}}^{-2}\ (k=1,\cdots,K)$ is equal to
\begin{equation}
\label{form013}
\prod_{1\leq i<k\leq K}(\omega_{a'_{k}}^{-2}-\omega_{a'_{i}}^{-2}).
\end{equation}
By Lemma \ref{le4}, one has
\begin{eqnarray}
\label{form014}
\prod_{1\leq i<k\leq K-2}(\omega_{a'_{i}}^{-2}-\omega_{a'_{k}}^{-2}) & \geq & \frac{\sqrt{2}}{3}\prod_{1\leq i\leq K-2}\langle a'_{i}\rangle^{-\frac{3}{2}(K-3)},
\\\label{form015}\prod_{1\leq i\leq K-2}(\omega_{a'_{i}}^{-2}-\omega_{a'_{K-1}}^{-2}) & \geq &  \frac{1}{6}\prod_{1\leq i\leq K-2}\langle a'_{i}\rangle^{-3},
\\\label{form016}\prod_{1\leq i\leq K-2}(\omega_{a'_{i}}^{-2}-\omega_{a'_{K}}^{-2}) & \geq &\frac{1}{3}\prod_{1\leq i\leq K-2}\langle a'_{i}\rangle^{-3},
\\\label{form017}\omega_{a'_{K-1}}^{-2}-\omega_{a'_{K}}^{2} & \geq &\langle a'_{K-1}\rangle^{-3}.
\end{eqnarray}
By \eqref{form012}--\eqref{form017}, one has
\begin{equation}
\label{form018}
|D|\geq\frac{1}{54\sqrt{2}}\left(\prod_{k=2}^{K-1}\frac{(2k-3)!}{4^{k-1}(k-2)!}\right)\left(\prod_{k=1}^{K}\omega_{a'_{k}}\right)\frac{1}{(\prod_{1\leq i\leq K-2}\langle a'_{i}\rangle^{\frac{3}{2}(K+1)})\langle a'_{K-1}\rangle^{3}}.
\end{equation}
Next, we estimate the sum of elements of each row of $D$. By \eqref{form011}, one has
\begin{align}
\label{form019}&\sum_{k=1}^{K}\omega_{a'_{k}}\leq K\omega_{a'_{K}},
\\\label{form020}&\Big|\frac{d}{dm}(\sum_{k=1}^{K}\omega_{a'_{k}})\Big|=\frac{1}{2}\sum_{k=1}^{K}\omega_{a'_{k}}^{-1}\leq\frac{K}{2},
\\\label{form021}&\Big|\frac{d^{i}}{dm^{i}}(\sum_{k=1}^{K}\omega_{a'_{k}})\Big|=\frac{(2i-3)!}{4^{i-1}(i-2)!}\left(\sum_{k=1}^{K}\omega_{a'_{k}}^{1-2i}\right)\leq\frac{2(2i-3)!}{4^{i-1}(i-2)!},\;\text{for any}\;i\geq2.
\end{align}
Hence, one has
\begin{equation}
\label{form022}
\prod_{i=0}^{K-1}\Big|\frac{d^{i}}{dm^{i}}(\sum_{k=1}^{K}\omega_{a'_{k}})\Big|\leq2^{K-3}K^{2}\left(\prod_{k=2}^{K-1}\frac{(2k-3)!}{4^{k-1}(k-2)!}\right)\omega_{a'_{K}}.
\end{equation}
By Lemma \ref{le5},  \eqref{form018}, \eqref{form022} and the fact $\omega_{a_{k}}\geq\langle a_{k}\rangle/\sqrt2$, there exists $i_{0}\in\{0,\cdots,K-1\}$ such that
\begin{align}
\label{form023}
\Big|w\cdot\frac{d^{i_{0}}\omega}{dm^{i_{0}}}\Big|&\geq\frac{\|w\|_{\ell^{1}}|D|}{K^{\frac{3}{2}}\prod_{i=0}^{K-1}\big|\frac{d^{i}}{dm^{i}}(\sum_{k=1}^{K}\omega_{a'_{k}})\big|}
\\\notag&\geq\frac{\|w\|_{\ell^{1}}}{2^{\frac{3}{2}K+3}K^{\frac{7}{2}}(\prod_{k=1}^{K-2}\langle a'_{k}\rangle)^{\frac{3}{2}K+\frac{1}{2}}\langle a'_{K-1}\rangle^{2}},
\end{align}
where $\omega=(\omega_{a'_{1}},\cdots,\omega_{a'_{K}})$ and $w\in\mb{R}^{K}$ is a vector whose components are not zero. Moreover, for $K=2$, \eqref{form023} still holds with convention that $\prod_{k=1}^{K-2}\langle a'_{k}\rangle=1$.
\\\indent Finally,  for any $\bs{j}=(\delta_{k},a_{k})_{k=1}^{l}\in(\mb{U}_{2}\times\mb{Z})^{l}\backslash\mc{J}_{l}$, we estimate the measure of $\ms{P}_{\bs{j}}(\gamma,\tau)$. Let $h(m)=\sum_{k=1}^{l}\delta_{k}\omega_{a_{k}}(m)$. Then $h(m)$ could be expressed by $\sum_{k=1}^{K}w_{k}\omega_{a'_{k}}$ with $w_{k}\neq0$ and $0\leq a'_{1}<\cdots<a'_{K}$. If $K=1$, then $\ms{P}_{\bs{j}}(\gamma,\tau)=\emptyset$. So we only need to consider the case $2\leq K\leq l$, and thus $2\leq\|w\|_{\ell^{1}}\leq l$. By \eqref{form023}, for any $m\in[1,2]$, there exists $i_{0}\in\{0,\cdots,K-1\}$ such that
\begin{equation}
\label{form024}
|h^{(i_{0})}(m)|\geq\frac{1}{2^{\frac{3}{2}l+2}l^{\frac{7}{2}}(\prod_{k=3}^{l}j_{k}^{*})^{\frac{3}{2}l+\frac{1}{2}}(j_{2}^{*})^{2}}.
\end{equation}
For any $i=1,\cdots,K$, in view of \eqref{form011}, one has
$$|h^{(i)}(m)|\leq\|w\|_{\ell^{1}}\sup_{1\leq k\leq K}\Big|\frac{d^{i}\omega_{a'_{k}}}{dm^{i}}\Big|\leq l\frac{(i-1)!}{2}\leq l\frac{(K-1)!}{2}.$$
Notice that $\prod_{k=1}^{K-2}\langle a'_{k}\rangle\geq(K-2)!$.
Hence, one has
\begin{equation}
\label{form025}
|h^{(i)}(m)|\leq\frac{l(K-1)}{2}\prod_{k=1}^{K-2}\langle a'_{k}\rangle\leq\frac{l^{2}}{2}\prod_{k=3}^{l}j_{k}^{*}.
\end{equation}
By \eqref{form024}, \eqref{form025}, Lemma \ref{le6} and Lemma \ref{le3}, one has
\begin{align*}
|\ms{P}_{\bs{j}}(\gamma,\tau)|\leq&8\Big(\frac{l^{2}}{2}\prod_{k=3}^{l}j_{k}^{*}\Big)\Big(2^{\frac{3}{2}l+2}l^{\frac{7}{2}}(\prod_{k=3}^{l}j_{k}^{*})^{\frac{3}{2}l+\frac{1}{2}}(j_{2}^{*})^{2}\Big)^{2}\Big(\gamma(\frac{j_{1}^{*}}{\langle\mc{M}(\bs{j})\rangle j_{2}^{*}\dots j_{l}^{*}})^{\tau}\Big)^{\frac{1}{K-1}}
\\\leq&2^{3l+6}l^{9}(j_{2}^{*})^{4}(\prod_{k=3}^{l}j_{k}^{*})^{3l+2}\Big(\gamma(\frac{j_{1}^{*}}{\langle\mc{M}(\bs{j})\rangle j_{2}^{*}\dots j_{l}^{*}})^{\tau}\Big)^{\frac{1}{l}}
\\\leq&2^{3l+6}l^{9}(j_{2}^{*})^{4}(\prod_{k=3}^{l}j_{k}^{*})^{3l+2-\frac{\tau}{3l}}\gamma^{\frac{1}{l}}.
\end{align*}
The proof is completed.
\end{proof}
%%%%%
%assume $j_{1}^{*}=\langle a_{1}\rangle, j_{2}^{*}=\langle a_{2}\rangle$ without loss of generality.
For any $\bs{j}=(\delta_{k},a_{k})_{k=1}^{l}\in(\mb{U}_{2}\times\mb{Z})^{l}\backslash\mc{J}_{l}$, if $\delta_{1}\delta_{2}=1$ or $\delta_{1}\delta_{2}=-1$ and $j_{2}^{*}$ is not too big, then we estimate the measure of $\ms{P}_{\bs{j}}(\gamma,\tau)$ by Lemma \ref{le7}. Otherwise, we estimate $\ms{P}_{\bs{j}}(\gamma,\tau)$ by
\begin{equation}
\label{form026}
\ms{P}'_{n,\bs{j}'}(X):=\big\{m\in[1,2] \mid |n+\delta_{3}\omega_{a_{3}}+\dots+\delta_{l}\omega_{a_{l}}|<X\big\}
\end{equation}
with $n:=\delta_{1}(|a_{1}|-|a_{2}|)$, $\bs{j}':=(\delta_{k},a_{k})_{k=3}^{l}$ and an appropriate $X$.
%%%%%
\begin{lemma}
\label{le8}
For the above $n,\bs{j}'$, one has
\begin{equation}
\label{form027}
|\ms{P}'_{n,\bs{j}'}(X)|\leq2^{3l-3}l^{6}(j_{3}^{*}\cdots j_{l}^{*})^{3l-6}X^{\frac{1}{l}}.
\end{equation}
\end{lemma}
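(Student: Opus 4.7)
The proof follows the template of \Cref{le7}, with one essential modification: the constant $n$ vanishes upon differentiation, so the determinant argument is built on derivatives of order $1$ through $K$ rather than $0$ through $K-1$.

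First, group equal-frequency terms to write
\[
h(m):=n+\delta_{3}\omega_{a_{3}}(m)+\cdots+\delta_{l}\omega_{a_{l}}(m)=n+\sum_{k=1}^{K}w_{k}\omega_{a'_{k}}(m),
\]
where $0\leq a'_{1}<\cdots<a'_{K}$ enumerate the distinct absolute values occurring among $|a_{3}|,\dots,|a_{l}|$, the weights $w_{k}$ are nonzero integers with $\sum_{k}|w_{k}|\leq l-2$, and $K\leq l-2$. If $K=0$, then $h\equiv n$; since we may assume $X\leq 1$, $\mathscr{P}'_{n,\bs{j}'}(X)$ is empty unless $n=0$, and $K=n=0$ would force $\bs{j}\in\mc{J}_{l}$ (each $\delta_{k}\omega_{a_{k}}$ with $k\geq 3$ pairing off and $|a_{1}|=|a_{2}|$), contradicting our standing hypothesis. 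Hence we may assume $K\geq 1$.

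Next, form the $K\times K$ matrix $U(m)=\bigl(\frac{d^{i}\omega_{a'_{k}}}{dm^{i}}\bigr)_{1\leq i,k\leq K}$. From \eqref{form011} its determinant factors as $\bigl(\prod_{i=1}^{K}c_{i}\bigr)\bigl(\prod_{k=1}^{K}\omega_{a'_{k}}^{-1}\bigr)\prod_{1\leq k<k'\leq K}(\omega_{a'_{k'}}^{-2}-\omega_{a'_{k}}^{-2})$, the last factor being Vandermonde in the variables $\omega_{a'_{k}}^{-2}$. Lower-bound that product by means of \Cref{le4}, exactly as in the derivation of \eqref{form018}. Writing $h^{(i)}(m)=w\cdot u^{(i)}(m)$ with $u^{(i)}(m)=\bigl(\frac{d^{i}\omega_{a'_{k}}}{dm^{i}}\bigr)_{k=1}^{K}$, apply \Cref{le5} to obtain some $i_{0}\in\{1,\dots,K\}$ with
\[
|h^{(i_{0})}(m)|\geq\frac{\|w\|_{\ell^{1}}|\det U(m)|}{K^{3/2}\prod_{i=1}^{K}\|u^{(i)}(m)\|_{\ell^{1}}}=:A.
\]
The row-wise bounds for $\|u^{(i)}\|_{\ell^{1}}$ coming from \eqref{form020}--\eqref{form021} make $A$ explicit in $l$ and $j_{3}^{*},\dots,j_{l}^{*}$, while the same derivative formulae supply an upper bound $B$ with $|h^{(i)}(m)|\leq B$ for $1\leq i\leq K+1$, of polynomial size in $\prod_{k=3}^{l}j_{k}^{*}$.

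Finally, invoke \Cref{le6} with its internal parameter equal to $K$, obtaining $|\mathscr{P}'_{n,\bs{j}'}(X)|\leq 2(B/A)(2+3+\cdots+K+2/A)\,X^{1/K}$. Since $K\leq l$ and $X\leq 1$, monotonicity yields $X^{1/K}\leq X^{1/l}$; combining this with the explicit bounds on $A$ and $B$ produces the claimed $2^{3l-3}l^{6}(j_{3}^{*}\cdots j_{l}^{*})^{3l-6}X^{1/l}$. The main obstacle is the careful bookkeeping of numerical constants and Vandermonde exponents required to hit the exact prefactors in the statement, for which one can mimic the computation in \Cref{le7} line by line, with the order of differentiation shifted by one.
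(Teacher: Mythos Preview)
Your proposal follows essentially the same strategy as the paper's proof: reduce to distinct frequencies with nonzero integer weights, form the $K\times K$ determinant of derivatives of orders $1$ through $K$, estimate it via the Vandermonde factorization and \Cref{le4}, then combine \Cref{le5} and \Cref{le6} and pass from $X^{1/K}$ to $X^{1/l}$. Your explicit handling of the degenerate case $K=0$ and the remark that one may assume $X\leq 1$ are slightly more careful than the paper (which simply asserts $1\leq K\leq l-2$), but the argument is otherwise identical.
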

%%%
\begin{proof}
Similarly with the proof of Lemma \ref{le7}, for any integer $K\geq2$, consider indexes $0\leq a'_{1}<\cdots<a'_{K}$ and the determinant
$$D':=\begin{vmatrix}
\frac{d\omega_{a'_{1}}}{dm} & \frac{d\omega_{a'_{2}}}{dm} &  \cdots & \frac{d\omega_{a'_{K}}}{dm}\\
\vdots & \vdots &   & \vdots\\
\frac{d^{K}\omega_{a'_{1}}}{dm^{K}} & \frac{d^{K}\omega_{a'_{2}}}{dm^{K}} &  \cdots &
\frac{d^{K}\omega_{a'_{K}}}{dm^{K}}
\end{vmatrix}.$$
By \eqref{form011}, \eqref{form013}, Lemma \ref{le4} and $\omega_{a}\leq\sqrt{2}\langle a\rangle$, one has
\begin{align*}
|D'|&=\frac{1}{2}\left(\prod_{k=2}^{K}\frac{(2k-3)!}{4^{k-1}(k-2)!}\right)\left(\prod_{k=1}^{K}\omega_{a'_{k}}^{-1}\right)\prod_{1\leq i<k\leq K}|\omega_{a'_{k}}^{-2}-\omega_{a'_{i}}^{-2}|
\\&\geq\frac{1}{6(\sqrt{2})^{K-1}}\left(\prod_{k=2}^{K}\frac{(2k-3)!}{4^{k-1}(k-2)!}\right)\prod_{k=1}^{K}\langle a'_{k}\rangle^{\frac{1}{2}-\frac{3}{2}K}.
\end{align*}
Notice that by \eqref{form020} and \eqref{form021}, one has
$$\prod_{i=1}^{K}|\frac{d^{i}}{dm^{i}}(\sum_{k=1}^{K}\omega_{a'_{k}})|\leq2^{K-2}K\left(\prod_{i=2}^{K}\frac{(2i-3)!}{4^{i-1}(i-2)!}\right).$$
Hence, by Lemma \ref{le5}, for any $m\in[1,2]$, there exists $i_{0}\in\{1,\cdots,K\}$ such that
\begin{equation}
\label{form028}
|w\cdot\frac{d^{i_{0}}\omega}{dm^{i_{0}}}|\geq\frac{\|w\|_{\ell^{1}}|D'|}{K^{\frac{3}{2}}\prod_{i=1}^{K}|\frac{d^{i}}{dm^{i}}(\sum_{k=1}^{K}\omega_{a'_{k}})|}
\geq\frac{\|w\|_{\ell^{1}}}{2^{\frac{3}{2}K+\frac{1}{2}}K^{\frac{5}{2}}\prod_{k=1}^{K}\langle a'_{k}\rangle^{\frac{3}{2}K-\frac{1}{2}}},
\end{equation}
where $\omega=(\omega_{a'_{1}},\cdots,\omega_{a'_{K}})$ and $w\in\mb{R}^{K}$ is a vector whose components are not zero. Moreover, for $K=1$, \eqref{form028} still holds.
\\\indent Finally, we estimate the measure of $\ms{P}'_{n,\bs{j}'}(X)$. Let $h(m)=n+\sum_{k=3}^{l}\delta_{k}\omega_{a_{k}}(m)$. Then $h(m)$ could be expressed by $n+\sum_{k=1}^{K}w_{k}\omega_{a'_{k}}$ with $w_{k}\neq0$ and $0\leq a'_{1}<\cdots<a'_{K}$. Obviously, $1\leq K\leq l-2$ and then by \eqref{form028}, for any $m\in[1,2]$, there exists $i_{0}\in\{1,\cdots,K\}$ such that
\begin{equation}
\label{form029}
|h^{(i_{0})}(m)|>\frac{1}{2^{\frac{3}{2}(l-2)+\frac{1}{2}}l^{\frac{5}{2}}(\prod_{k=3}^{l}j_{k}^{*})^{\frac{3}{2}l-\frac{7}{2}}}.
\end{equation}
In view of \eqref{form011}, for any $i=1,\cdots,K+1$, one has
$$|h^{(i)}(m)|\leq l\frac{(i-1)!}{2}\leq l\frac{K!}{2}.$$
Notice that $\prod_{k=1}^{K}\langle a'_{k}\rangle\geq K!$ and thus
\begin{equation}
\label{form030}
|h^{(i)}(m)|\leq \frac{l}{2}\prod_{k=1}^{K}\langle a'_{k}\rangle\leq\frac{l}{2}\prod_{k=3}^{l}j_{k}^{*}.
\end{equation}
By \eqref{form029}, \eqref{form030} and Lemma \ref{le6}, one has
\begin{align*}
|\ms{P}'_{n,\bs{j}'}(\gamma,\tau)|&\leq8(\frac{l}{2}\prod_{k=3}^{l}j_{k}^{*})\big(2^{\frac{3}{2}(l-2)+\frac{1}{2}}l^{\frac{5}{2}}(\prod_{k=3}^{l}j_{k}^{*})^{\frac{3}{2}l-\frac{7}{2}}\big)^{2}X^{\frac{1}{K}}
\\&\leq2^{3l-3}l^{6}(\prod_{k=3}^{l}j_{k}^{*})^{3l-6}X^{\frac{1}{l}}.
\end{align*}
The proof is completed.
\end{proof}
%%%%%

%==========================================
%\bibliographystyle{alpha}%abbrv
%\bibliography{reference}

%===============================

\end{document}